\newtheorem{theorem}{Theorem}[section]
\newtheorem{lemma}[theorem]{Lemma}
\newtheorem{propn}[theorem]{Proposition}
\newtheorem{corol}[theorem]{Corollary}
\newtheorem{definition}[theorem]{Definition}
\newcommand{\newU}{\ensuremath{\mathcal{U}}}
\newcommand{\newV}{\ensuremath{\mathcal{V}}}
\newcommand{\sgn}{\ensuremath{\,\mathrm{sgn}}}
\newcommand{\finfldq}[1]{\ensuremath{\mathbb{F}_{#1}}}
\newcommand{\polyringq}[1]{\ensuremath{\mathbb{F}_{#1}[t]}}
\newcommand{\genlink}[1]{\ensuremath{GL_2(#1)}}
\newcommand{\matlink}[1]{\ensuremath{Mat_2(#1)}}
\newcommand{\rats}{\ensuremath{\mathbb{Q}}}
\newcommand{\ratsfuncqextn}[2]{\ensuremath{\mathbb{F}_{#1}(t,#2)}}
\newcommand{\ratsfuncq}[1]{\ensuremath{\mathbb{F}_{#1}(t)}}
\newcommand{\norm}[1]{\ensuremath{\mathcal{N}(#1)}}
\newcommand{\Fq}{\ensuremath{\mathbb{F}_q}}
\begin{document}

\title{Tabulation of cubic function fields via polynomial binary cubic forms}

\author{Pieter Rozenhart \\Department of Mathematics and Statistics, University of
  Calgary,\\
  2500 University Drive NW, Calgary, Alberta, Canada, T2N 1N4\\ \texttt{pmrozenh@alumni.ucalgary.ca} \and Michael Jacobson Jr.\ \\   Department of Computer Science, University of
  Calgary,\\
  2500 University Drive NW, Calgary, Alberta, Canada, T2N 1N4\\
   \texttt{jacobs@cpsc.ucalgary.ca} 
\and Renate Scheidler \\Department of Mathematics and Statistics, University of
  Calgary,\\
  2500 University Drive NW, Calgary, Alberta, Canada, T2N 1N4\\
    \texttt{rscheidl@math.ucalgary.ca}}





\maketitle
\begin{abstract}
We present a method for tabulating all cubic function fields over $\ratsfuncq{q}$ whose discriminant $D$ has either odd degree or even degree and the leading coefficient of $-3D$ is a non-square in $\finfldq{q}^*$, up to a given bound $B$ on $\deg(D)$.  Our method is based on a generalization of Belabas' method for tabulating cubic number fields.  The main theoretical ingredient is a generalization of a theorem of Davenport and Heilbronn to cubic function fields, along with a reduction theory for binary cubic forms that provides an efficient way to compute equivalence classes of binary cubic forms.  The algorithm requires $O(B^4 q^B)$ field operations as $B \rightarrow \infty$.  The algorithm, examples and numerical data for $q=5,7,11,13$ are included.  
\end{abstract}



%

\section{Introduction and Motivation}
 
 In 1997, Belabas \cite{Bela1} presented an algorithm for
tabulating all non-isomorphic cubic number fields of discriminant
$D$ with $|D| \leq X$ for any $X > 0$.  In the above context, \emph{tabulation}
means that all non-isomorphic fields with discriminant $|D| \leq X$
are listed or written to a file, by listing the minimal polynomial for each respective field.  The results make use of the
reduction theory for binary cubic forms with integral
coefficients. A theorem of Davenport and Heilbronn \cite{HeDav2}
states that there is a discriminant-preserving bijection between
$\rats$-isomorphism classes of cubic number fields of discriminant
$D$ and a certain explicitly characterizable set $\newU$ of
equivalence classes of primitive irreducible integral binary cubic
forms of the same discriminant $D$. Using this one-to-one
correspondence, one can enumerate all cubic number fields of
discriminant $D$ with $|D| \leq X$ by computing the unique reduced
representative $f(x,y)$ of every equivalence class in $\newU$ of
discriminant $D$ with $|D| \leq X$. The corresponding field is
then obtained by simply adjoining a root of the irreducible cubic
$f(x,1)$ to \rats. Belabas' algorithm is essentially linear in
$X$, and performs quite well in practice. 

In this paper, we give an extension of the above approach to
function fields.  That is, we present a method for tabulating all $\ratsfuncq{q}$-isomorphism classes of
cubic function fields over a fixed finite field $\finfldq{q}$ up to a given
upper bound on the degree of the discriminant, using the theory
of binary cubic forms with coefficients in \polyringq{q}, where
\finfldq{q}\ is a finite field with char$(\finfldq{q}) \neq 2,3$.  The discriminant $D$ must also satisfy certain technical conditions, which we detail below.  

This paper is a substantial expansion and extension of the material in \cite{ANTSPie} and corresponds to Chapters 4 and 6 of the first author's Ph.D thesis, prepared under the supervision of the last two authors.  The present paper includes a more detailed description of the algorithm in the unusual discriminant case and improved bounds on the coefficients of a reduced form compared to those appearing in \cite{ANTSPie}.  Another somewhat complementary approach to tabulating cubic function fields different from the one taken in this paper is described in \cite{cuffqi}, and relies on computations of quadratic ideals to construct all cubic function fields of a given fixed discriminant.  The approach in \cite{cuffqi} is limited to finite fields $\finfldq{q}$ where $q \equiv -1 \pmod{3}$, and unlike our algorithm, considers only fixed discriminants and not all discriminants $D$ with $\deg(D)$ bounded above.

Our main tool is the function field analogue of the
Davenport-Heilbronn theorem \cite{HeDav2} mentioned above, which generalizes to Dedekind domains (see Taniguchi \cite{Taniguchi}).  As in the case of integral forms, we also make use of the association of any binary
cubic form $f$ of discriminant $D$ over \polyringq{q}\ to its
Hessian $H_f$ which is a binary quadratic form over \polyringq{q}\
of discriminant $-3D$. Under certain conditions on the discriminant, this association
can be exploited to develop a reduction theory for binary cubic
forms over \polyringq{q} that is analogous to the reduction theory
for integral binary cubic forms. Suppose that $\deg(D)$ is odd, or that
$\deg(D)$ is even and the leading coefficient of $-3D$ is a
non-square in $\finfldq{q}^*$. We will establish that under these conditions, the
equivalence class of $f$ contains a unique reduced form, i.e.\ a
binary cubic form that satisfies certain normalization conditions
and has a reduced Hessian. Thus, equivalence classes of binary cubic forms can be
efficiently identified via their unique representatives.  The case where $\deg(D)$ is odd is analogous to the case of definite binary quadratic forms, but the other case has no number field analogue. 

Our tabulation method proceeds analogously to the number field
scenario. The function field analogue of the Davenport-Heilbronn
theorem states that there is a discriminant-preserving
bijection between \ratsfuncq{q}-isomorphism classes of cubic
function fields of discriminant $D \in \polyringq{q}$ and a
certain set $\newU$ of primitive irreducible binary cubic forms
over \polyringq{q} of discriminant $D$. Hence, in order to list
all \ratsfuncq{q}-isomorphism classes of cubic function fields up
to an upper bound $B$ on $\deg(D)$, it suffices to
enumerate the unique reduced representatives of all equivalence
classes of binary cubic forms of discriminant $D$ for all $D \in$
\polyringq{q} with $\deg(D) \leq B$. Bounds on the
coefficients of such a reduced form show that there are only
finitely many candidates for a fixed
discriminant. These bounds can then be employed in nested loops over the coefficients to
test whether each form found lies in $\newU$. The coefficient bounds obtained for function fields are
different from those used by Belabas for number fields, due to the
fact that the degree valuation is non-Archimedean.  In fact, we obtain far simpler and more elegant  bounds than those in the number field case. 

This paper is organized as follows.   Section \ref{sec:prel} begins with some background material on algebraic function fields.  The reduction theory for imaginary and unusual binary quadratic forms and binary cubic forms over $\polyringq{q}$ is developed in Sections \ref{BQFUnus} and \ref{sec:cubredn}, respectively.  The derivation of the bounds on the coefficients of a reduced binary cubic form appears in Section \ref{sec:coefbdds}.   The Davenport-Heilbronn Theorem for cubic function fields is presented in Section \ref{sec:DH}.
We detail the tabulation algorithm as
well as numerical results in Section \ref{sec:algnum}.  Finally, we conclude with some open problems and future research directions in Section \ref{sec:Concl}.

\section{Preliminaries}\label{sec:prel}

For a general introduction to algebraic function fields, we refer
the reader to Rosen \cite{Rosen} or Stichtenoth \cite{Sticht}. Let
\finfldq{q}\ be a finite field of characteristic at least~$5$, and
set $\finfldq{q}^* = \finfldq{q} \backslash \{0\}$. Denote by
\polyringq{q}\ and \ratsfuncq{q}\ the ring of polynomials and the
field of rational functions in the variable $t$ over \finfldq{q},
respectively.  For any non-zero $H \in \polyringq{q}$ of degree $n
= \deg(H)$, we let $|H| = q^n = q^{\deg(H)}$, and denote by
$\sgn(H)$ the leading coefficient of $H$. For $H = 0$, we set $|H|
= 0$. This absolute value extends in the obvious way to
\ratsfuncq{q}. Note that in contrast to the absolute value on the
rational numbers, the absolute value on \ratsfuncq{q} is non-Archimedean.

An \emph{algebraic function field} is a finite extension $K$ of $\ratsfuncq{q}$; its degree is the
field extension degree $n = [K : \ratsfuncq{q}]$. It is always possible to write a function
field as $K = \ratsfuncqextn{q}{y}$ where $F(t, y) = 0$ and $F(Y )$ is a monic polynomial of degree
$n$ in $Y$ with coefficients in $\polyringq{q}$ that is irreducible over $\ratsfuncq{q}$.   We assume that $\finfldq{q}$ is the full constant field of $K$, i.e. $F(Y)$ is absolutely irreducible.

A homogeneous polynomial in two variables of degree $2$, with
coefficients in \polyringq{q}, of the form
\[ f(x,y) = Ax^2 + Bxy + Cy^2  \]
is called a \emph{binary quadratic form} over \polyringq{q}. We abbreviate the form
as $f = (A,B,C)$.   Similarly, a homogeneous polynomial in two variables of degree $3$,
with coefficients in \polyringq{q}, of the form
\[f(x,y) = ax^3 + bx^2y + cxy^2 + dy^3  \]
is called a \emph{binary cubic form over \polyringq{q}}.  We
abbreviate the form $f(x,y)$ by $f=(a,b,c,d)$.  The discriminant $D$ of a binary quadratic form $f=(A,B,C)$ is $D(f) = B^2 - 4AC$.  In a similar vein, the discriminant of a binary cubic form $f=(a,b,c,d)$ is $D = D(f) = 18abcd + b^2c^2 - 4ac^3 - 4b^3d - 27a^2d^2$.  We will assume that all forms are primitive, irreducible over \polyringq{q}, and have distinct roots and thus non-zero discriminant.

Let $D$ be a polynomial in $\polyringq{q}$.   
Then $D$ is said to be \emph{imaginary}
if $D$ has odd degree, \emph{unusual} if $D$ has even
degree and $\sgn(D)$ is a non-square in
$\finfldq{q}^*$, and \emph{real} if $D$ has even degree and $\sgn(D)$ is a square in $\finfldq{q}^*$.   
Correspondingly, a binary quadratic form is said to be \emph{imaginary, unusual or real} according to whether its discriminant is imaginary, unusual or real.  These terms have their origins in quadratic number fields.  If $D$ is imaginary (resp.\ real), then the quadratic function field $\ratsfuncqextn{q}{\sqrt{D}}$ shows many similarities to an imaginary (resp.\ real) quadratic number field, such as the splitting of the infinite place of $\ratsfuncq{q}$ and the unit group structure.  For $D$ unusual, there is no number field analogue to the function field $\ratsfuncqextn{q}{\sqrt{D}}$.  The terminology ``unusual" is due to Enge \cite{Enge1}, and we adopt this terminology for quadratic fields and binary quadratic forms in this paper.  

Integral binary forms have a rich history going back to Lagrange and Gauss, and many important applications (see Buchmann and Vollmer \cite{BuchVoll} and Buell \cite{Buell1}).  Their reduction theory was developed for positive definite forms first, as this case is the most straightforward.  Recall (from Buell \cite{Buell1}, Chapters 1 and 2, for example) that an integral binary quadratic form $(A,B,C)$ is \emph{definite} if its discriminant is negative.  In this case, both $A$ and $C$ have the same sign.   One then further specializes to \emph{positive definite} forms; these are forms with negative discriminant and $A > 0$ (and hence $C >0$).  In other words, one considers the element $f = (A,B,C)$ in the associate class of definite forms with $A>0$.  Correspondingly, if $f = (A,B,C)$ is a binary quadratic form with imaginary or unusual discriminant, then we say that $f$ is \emph{positive definite} if $\sgn(A)$ is a square in $\finfldq{q}^*$, and \emph{negative definite} otherwise.

We summarize a few useful notions and results pertaining to binary quadratic and cubic forms over $\polyringq{q}$ below. These results are completely analogous to their well-known counterparts for integral binary quadratic forms, found in Chapter 8 of Cohen \cite{Coh2}. 

The set
\[\{ M : M \mbox{ is a $2\times 2$ matrix with entries in $\polyringq{q}$ and } \det(M) \neq 0 \}\]
is denoted by \matlink{\polyringq{q}}.
As usual, the set 
\[\genlink{\polyringq{q}} = \{ M : M \in \matlink{\polyringq{q}} \mbox{  with } \det(M) \in \finfldq{q}^* \}\]
denotes the \emph{general linear group of degree $2$}. 

Let $f$ be a binary quadratic or cubic form and $M = \left( \begin{smallmatrix} \alpha & \beta \\ \gamma & \delta \end{smallmatrix} \right) \in \matlink{\polyringq{q}}$.  
The \emph{action} of
$M$ on $f$ is given by $f \circ M = f(\alpha x + \beta y, \gamma x + \delta y)$.  Using this action, we give the definition of equivalence, which differs slightly from \cite{ANTSPie} in that the extra multiplication of $f$ by a unit is removed.  This was done to simplify some of the subsequent work.
Two binary forms $f$
and $g$ over $\polyringq{q}$ are said to be \emph{equivalent} if
\[f(\alpha x + \beta y, \gamma x + \delta y) = g(x,y)\]
for some $\alpha, \beta, \gamma,
\delta \in \polyringq{q}$ with $\alpha \delta - \beta \gamma \in
\finfldq{q}^*$, i.e., $g = f \circ M$ for some $M$ in $\genlink{\polyringq{q}}$.  We immediately obtain equations for the coefficients of equivalent binary quadratic forms, namely, if $f = (A,B,C)$ and $g=(A',B',C') = f \circ M$, with $M$ as above, then
\begin{eqnarray}
A' & = & A \alpha^2 + B \alpha \gamma + C \gamma^2, \label{A1form}  \\
B' & = & 2A \alpha \beta + B (\alpha \delta + \beta \gamma) + 2C \gamma \delta, \label{B1form}\\
C' & = & A \beta^2 + B \beta \delta + C \delta^2 .  \label{C1form}
\end{eqnarray}
Analogous formulas for binary cubic forms are also easy to obtain, and are omitted.

We note that it indeed possible for a positive definite unusual form to be equivalent to a
negative definite form.  For instance, if $|A| = |C|$ with $\sgn(A) = 1$ (so $f$ is
positive definite) and $q \equiv 1 \pmod{4}$ (so $-h/4$ is a non-square), then
swapping $A$ and $C$ yields an equivalent negative definite form.

\begin{propn}
Let $M = \left( \begin{smallmatrix}
    \alpha & \beta \\ \gamma & \delta \end{smallmatrix} \right) \in \matlink{\polyringq{q}}$ and $f$ a binary
quadratic or cubic form over $\polyringq{q}$.   Then the following hold:
\begin{enumerate}
\item $D(f \circ M) = (\alpha \delta - \beta \gamma)^{2}\,\cdot D(f)$, if $f$ is a binary quadratic form. 
\item $D(f \circ M) = (\alpha \delta - \beta \gamma)^{6}\,\cdot D(f)$, if $f$ is a binary cubic form.
\item  If $M$ is non-singular, then $f \circ M$ is irreducible over $\polyringq{q}$ if and only if $f$ is irreducible.
\item  If $M$ is non-singular, then $f \circ M$ is primitive if and only if $f$ is primitive. 
\end{enumerate}
\label{FstMajCub}
\end{propn}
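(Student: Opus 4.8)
\medskip\noindent\emph{Proof proposal.}\ The plan is to treat parts~1 and~2 as invariance computations, and to deduce parts~3 and~4 from the fact that $f\mapsto f\circ M$ is ``almost invertible''. Two formal identities do the bookkeeping: the composition law $(f\circ M)\circ N=f\circ(MN)$, and $M\,\mathrm{adj}(M)=\det(M)\,I_2$ with $\mathrm{adj}(M)=\left(\begin{smallmatrix}\delta & -\beta\\ -\gamma & \alpha\end{smallmatrix}\right)\in\matlink{\polyringq{q}}$, which together give $(f\circ M)\circ\mathrm{adj}(M)=\det(M)^{\deg f}f$ since $f$ is homogeneous of degree $\deg f\in\{2,3\}$.

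For parts~1 and~2 the most transparent route is to factor $f$ over an algebraic closure. Since $D(f)\neq 0$ we may write $f(x,y)=a_0\prod_{j=1}^{n}(x-\theta_j y)$ with leading coefficient $a_0$ and distinct roots $\theta_j\in\overline{\ratsfuncq{q}}$ ($n\in\{2,3\}$), so that $D(f)=a_0^{2n-2}\prod_{j<k}(\theta_j-\theta_k)^2$. The substitution carries the $j$-th factor to $(\alpha-\theta_j\gamma)x+(\beta-\theta_j\delta)y$; writing $\mu_j=\alpha-\theta_j\gamma$ and assuming for the moment that no $\theta_j$ equals $\alpha/\gamma$, the transformed form has leading coefficient $a_0\prod_j\mu_j$ and roots $\theta_j'=-(\beta-\theta_j\delta)/\mu_j$, and a short computation gives $\theta_j'-\theta_k'=(\alpha\delta-\beta\gamma)(\theta_j-\theta_k)/(\mu_j\mu_k)$. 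Feeding this into the product-of-differences formula, the factors $(\mu_j\mu_k)^{-2}$ cancel exactly against the $(\prod_j\mu_j)^{2n-2}$ contributed by the leading coefficient, leaving $D(f\circ M)=(\alpha\delta-\beta\gamma)^{n(n-1)}D(f)$ --- the exponent $2$ for a quadratic form and $6$ for a cubic one, as claimed. The excluded case (some $\theta_j=\alpha/\gamma$, i.e.\ the leading coefficient of $f\circ M$ drops) I would absorb by a polynomial-identity argument: each side is a polynomial in $\alpha,\beta,\gamma,\delta$, the two agree on a nonempty Zariski-open subset of $\overline{\ratsfuncq{q}}^{\,4}$, hence identically, and in particular for every $M\in\matlink{\polyringq{q}}$. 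A completely mechanical alternative that avoids any case distinction is to substitute (\ref{A1form})--(\ref{C1form}) and the analogous cubic formulas directly into $D(f)=B^2-4AC$ and into the cubic discriminant; one may also reduce to elementary and diagonal matrices via multiplicativity of $M\mapsto(\det M)^{n(n-1)}$.

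For part~3, one direction is immediate: a factorization $f=gh$ into forms of positive degree yields $f\circ M=(g\circ M)(h\circ M)$, and $g\circ M,h\circ M$ are again nonzero forms of the same positive degrees because $M$ is invertible over $\ratsfuncq{q}$; hence $f\circ M$ is reducible. Conversely, from $f\circ M=PQ$ with $\deg P,\deg Q\geq 1$ I would apply $\mathrm{adj}(M)$ to obtain $\det(M)^{\deg f}f=(P\circ\mathrm{adj}(M))(Q\circ\mathrm{adj}(M))$, a product of positive-degree forms, and then peel off the scalar $\det(M)^{\deg f}\in\polyringq{q}$ one prime factor at a time using Gauss's lemma in the UFD $\polyringq{q}[x,y]$, producing a genuine factorization of $f$. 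Part~4 runs the same way with a single prime $\pi\in\polyringq{q}$ in the role of the content: if $\pi$ divides every coefficient of $f\circ M$, then $\pi\mid(f\circ M)\circ\mathrm{adj}(M)=\det(M)^{\deg f}f$, and for $M\in\genlink{\polyringq{q}}$ (so that $M^{-1}$ again has entries in $\polyringq{q}$ and $f\mapsto f\circ M$ is a ring automorphism of $\polyringq{q}[x,y]$) this forces $\pi\mid f$, contradicting primitivity of $f$; the opposite implication follows by replacing $M$ with $M^{-1}$.

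I expect the algebra in parts~1 and~2 to be lengthy but entirely routine. The one genuine obstacle is that a nonsingular $M$ need not be invertible over $\polyringq{q}$: this forces parts~3 and~4 to be argued through $\mathrm{adj}(M)$ and Gauss's lemma rather than a naive inverse, and it is also why the primitivity statement is really a statement about the $\genlink{\polyringq{q}}$-action (for $M$ merely nonsingular in $\matlink{\polyringq{q}}$, e.g.\ $M=\mathrm{diag}(\pi,\pi)$ with $\pi$ irreducible, $f\circ M$ need not be primitive even when $f$ is).
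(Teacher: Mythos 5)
The paper gives no proof of Proposition \ref{FstMajCub} at all: it is asserted with a pointer to the analogous facts for integral forms in Chapter 8 of Cohen, so there is no argument of the authors' to compare yours against. Your proposal is a correct filling-in of the omitted proof. The root-factorization computation for parts 1 and 2 checks out (the identity $\theta_j'-\theta_k'=(\alpha\delta-\beta\gamma)(\theta_j-\theta_k)/(\mu_j\mu_k)$ and the cancellation of the $\mu_j$ against the new leading coefficient give exactly the exponent $n(n-1)$), and the Zariski-density patch for the degenerate case is a clean way to avoid case analysis; the adjugate-plus-Gauss's-lemma route for parts 3 and 4 is the standard device for matrices that are invertible over $\ratsfuncq{q}$ but not over $\polyringq{q}$. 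Your closing observation is also a genuine point about the statement itself: as written, with $\matlink{\polyringq{q}}$ already consisting of matrices of nonzero determinant, part 4 fails for $M=\mathrm{diag}(\pi,\pi)$ (then $f\circ M=\pi^{\deg f}f$ is imprimitive), so ``non-singular'' in parts 3 and 4 must be read as $\det(M)\in\finfldq{q}^*$, i.e.\ $M\in\genlink{\polyringq{q}}$ --- which is the only case the paper ever uses, since its equivalence relation is defined via $\genlink{\polyringq{q}}$.
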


By Proposition \ref{FstMajCub}, up to an even power of $\det(M)$, equivalent binary quadratic and cubic forms have the
same discriminant. In addition, primitivity and irreducibility are
preserved by the action just defined, provided the transformation matrix is non-singular. 

We finish this section by introducing the Hessian of a binary cubic form, along with some of its properties which are easily verified by straightforward computation.

\begin{definition}
Let $f = (a,b,c,d)$ be a binary cubic form over $\, \polyringq{q}$. 
\emph{The Hessian} of $f$ and the polynomials $P,Q,R$ are given by 
\[H_f(x,y) = -\frac{1}{4} \det \!\left(\begin{array}{cc}
\frac{\partial^2f}{\partial x \partial x} & 
\frac{\partial^2f}{\partial x \partial y} \\
\frac{\partial^2f}{\partial y \partial x} &
\frac{\partial^2f}{\partial y \partial y}\end{array}  \right) = Px^2 +
Qxy + Ry^2,\]
where $P = b^2 - 3ac$, $Q = bc - 9ad$ and $R = c^2 - 3bd$.
\end{definition}

Note that $H_f$ is a binary quadratic form over \polyringq{q}. 

\begin{propn}
Let $f = (a,b,c,d)$ be a binary cubic form over \polyringq{q}\ with Hessian $H_f =
(P,Q,R)$.  Then the following properties are satisfied.
\begin{enumerate}
\item For any $M \in \matlink{\polyringq{q}}$, we have $H_{f \circ M} = (\det
  M)^2\;\cdot H_f \circ M$.
\item $D(H_f) = -3\,\cdot D(f) $.
\end{enumerate}
\label{Hessprops2}
\end{propn}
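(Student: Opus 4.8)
The plan is to prove the two parts separately: part~(1) will come out of the chain rule for formal derivatives, and part~(2) is a short direct calculation (from which, in fact, part~(1) provides an alternate route as well).

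For part~(1), the essential point is that the Hessian \emph{matrix} of a homogeneous polynomial transforms functorially under a linear change of variables. Writing $\mathbf{x} = (x,y)^{T}$ and $g = f \circ M$, so that $g(\mathbf{x}) = f(M\mathbf{x})$, the chain rule applied to the formal partial derivatives gives $\nabla g(\mathbf{x}) = M^{T}\,(\nabla f)(M\mathbf{x})$, and differentiating once more,
\[
\mathrm{Hess}(g)(\mathbf{x}) \;=\; M^{T}\,\bigl(\mathrm{Hess}(f)\bigr)(M\mathbf{x})\, M .
\]
Since $f$ is a cubic, the entries of $\mathrm{Hess}(f)$ are linear forms in $x,y$, so evaluating them at $M\mathbf{x}$ is precisely applying the action of $M$ to those linear forms. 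Taking determinants of both sides, and using multiplicativity together with $\det(M^{T}) = \det(M)$, yields $\det\mathrm{Hess}(g) = (\det M)^{2}\,\bigl(\det\mathrm{Hess}(f)\bigr)\circ M$. Multiplying through by $-\tfrac14$ (harmless since $\charfld(\finfldq{q}) \neq 2$; equivalently, one simply reads the identity off from the polynomial formulas for $P,Q,R$ and their analogues for $f \circ M$) converts this into $H_{f\circ M} = (\det M)^{2}\, H_f \circ M$, which is the assertion. The only thing requiring care is the bookkeeping — the transpose in the chain rule and the point at which the second derivatives are evaluated — but once that is pinned down the step is purely formal.

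For part~(2), I would substitute $P = b^{2}-3ac$, $Q = bc-9ad$ and $R = c^{2}-3bd$ into $D(H_f) = Q^{2}-4PR$ and expand: the $b^{2}c^{2}$ and $abcd$ contributions from $Q^{2}$ and from $4PR$ combine, and collecting everything gives $Q^{2}-4PR = -3b^{2}c^{2} - 54abcd + 12ac^{3} + 12b^{3}d + 81a^{2}d^{2}$, which is visibly $-3\bigl(18abcd + b^{2}c^{2} - 4ac^{3} - 4b^{3}d - 27a^{2}d^{2}\bigr) = -3\,D(f)$. Alternatively, one can note that $D(H_f)$ and $D(f)$ are both homogeneous of degree $4$ in $a,b,c,d$ and, by part~(1) and Proposition~\ref{FstMajCub}, both pick up the factor $(\det M)^{6}$ under $f \mapsto f\circ M$; since the invariants of a binary cubic of this type are generated by the discriminant, $D(H_f)$ must be a scalar multiple of $D(f)$, and the constant $-3$ is then fixed by evaluating on a single form such as $(1,0,0,1)$. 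I do not expect a genuine obstacle in either part; the only real decision is to organize part~(1) conceptually so as to avoid an unenlightening brute-force substitution, and part~(2) is a brief but unavoidable polynomial identity.
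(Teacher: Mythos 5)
Your proposal is correct; the paper itself gives no proof, stating only that these properties ``are easily verified by straightforward computation,'' and your direct expansion of $Q^2 - 4PR = -3\,D(f)$ in part~(2) together with the chain-rule computation $\mathrm{Hess}(f\circ M) = M^{T}\,\mathrm{Hess}(f)(M\mathbf{x})\,M$ in part~(1) is exactly that computation, organized cleanly. The invariant-theoretic alternative you sketch for part~(2) is a nice sanity check but is not needed, since your primary expansion already closes the argument.
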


\section{Reduction of Binary Quadratic Forms over \polyringq{q}}\label{BQFUnus}

In this section, we give the reduction theory for binary quadratic forms with polynomial coefficients, due to Artin \cite{Artin1}.  A modified version of Artin's material is presented here, as he does not consider binary quadratic forms, but only their roots, which results in a simpler treatment.  Furthermore, some of his presentation is streamlined
in this paper, using more modern notation.

The reduction theory for binary quadratic and cubic forms allows us to single out a unique 
representative in each equivalence class of forms.  The theory also
enables the efficient computation of this representative, and demonstrates that
there are only finitely many such equivalence classes for any given
non-zero discriminant $D$ in the case of binary quadratic forms.  The case where $D$ is a real discriminant of a binary quadratic form is excluded from the paper.

The following conventions will be adopted.  As before, let \finfldq{q}\ be a finite field of characteristic at least $5$.  Fix a primitive root $h$ of $\finfldq{q}^*$.  We predefine the set $S = \{ h^i :  0 \leq i \leq (q-3)/2 \}$, so that $a \in S$ if and only if $-a \notin S$.     
As in Artin \cite{Artin1}, the discriminant $D$ of $f$ is also endowed with the normalization $\sgn(D) = 1$ or $\sgn(D) = h$, where $1$ or $h$ is chosen depending on whether or not $\sgn(D)$ is a square in $\finfldq{q}^*$.  We choose this normalization in order to avoid the possibility of forms being equivalent to each other while possessing different discriminants.  By Proposition \ref{FstMajCub}, the discriminant of a form can only change by a square factor of a finite field element, so normalizing to a single square or non-square sign value accomplishes this task.

We provide the reduction theory for binary quadratic forms over $\polyringq{q}$. We will treat the imaginary and unusual scenarios in parallel where possible, but there are significant differences.  The case of unusual binary quadratic forms differs from that of imaginary forms (see \cite{Pieter2}), as it has no number field analogue.  Another crucial difference is as follows:  the analogous definition of ``reduced" does not lead to a unique representative in each equivalence class in the case that $|A|= |C|$, but instead to $q+1$ equivalent forms, called \emph{partially reduced forms}, defined below.  To achieve uniqueness, a distinguished representative among these $q+1$ equivalent partially reduced forms needs to be identified.  Some of the results and proofs below apply to imaginary forms as well, and these occasions will be explicitly mentioned.  

For the remainder of this section, let $f=(A,B,C)$ be an imaginary or unusual binary quadratic form with discriminant $D$.
Recall that we only consider irreducible forms, hence $A \neq 0$.
Furthermore, our earlier assumption that $\finfldq{q}$ is the full constant field of the function field $\ratsfuncqextn{q}{\sqrt{D}}$ implies that $D \notin \finfldq{q}$.

\begin{definition}
An imaginary or unusual binary quadratic form $f = (A, B, C)$ over $\polyringq{q}$ is \emph{partially 
reduced} if it satisfies the following conditions:
\begin{enumerate}
\item $|B| < |A| \leq |C|$;
\item If $|A| < |C|$, then $\sgn(A) \in \{1,h\}$, and if $|A| = |C|$, then $\sgn(A) = 1$;
\item $B \neq 0$ implies $\sgn(B) \in S$. 
\end{enumerate}
\label{partiallyreduced}
\end{definition}

Note that $|B| < |A| \leq |C|$ if and only if $|B| < |A| \leq |D|^{1/2}$, and that reduced unusual forms are always positive definite.  The exponent in $|D|^{1/2} = q^{\deg(D)/2}$ in the above lemma is a half integer in the case where $f$ is an imaginary binary quadratic form, so property (1) above is in fact equivalent to $|B| < |A| < |C|$.
However, in the case of unusual binary quadratic forms, the exponent in $\sqrt{|D|} = q^{\deg(D)/2}$ is an integer.   Hence, equality $|A|= |C| = |D|^{1/2}$ may in fact occur in this case.

\begin{propn}
Every binary quadratic form is equivalent to a partially reduced binary quadratic form $f=(A,B,C)$ of the same discriminant.
\label{preeqquad}
\end{propn}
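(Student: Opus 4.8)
The plan is to transform $f$ by a sequence of matrices from $\genlink{\polyringq{q}}$, each of determinant $\pm1$, so that by Proposition~\ref{FstMajCub}(1) the discriminant stays literally fixed throughout, and to establish the three conditions of Definition~\ref{partiallyreduced} one at a time, checking that later adjustments do not disturb the earlier ones. This is the non-Archimedean analogue of classical Gaussian reduction, with the degree valuation playing the role of the absolute value.

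\emph{Condition (1).} Since $\charfld(\finfldq{q})\geq 5$ and $A\neq 0$, I can divide $B$ by $2A$, writing $B=-2Ak+r$ with $\deg r<\deg A$; replacing $f$ by $f\circ\left(\begin{smallmatrix}1&k\\0&1\end{smallmatrix}\right)$ then leaves $A$ unchanged and, by \eqref{B1form}, turns $B$ into $r$, so $|B|<|A|$. If also $|C|\geq|A|$ I am done with (1); otherwise I apply $f\circ\left(\begin{smallmatrix}0&1\\-1&0\end{smallmatrix}\right)$, which by \eqref{A1form}--\eqref{C1form} sends $(A,B,C)$ to $(C,-B,A)$ and so strictly lowers $\deg A$, and repeat the division step. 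Since $\deg A$ is a non-negative integer that strictly decreases at each interchange while $A$ stays nonzero (Proposition~\ref{FstMajCub}(3)), this loop terminates with $|B|<|A|\leq|C|$.

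\emph{Condition (2).} Acting by $f\circ\left(\begin{smallmatrix}\alpha&0\\0&\alpha^{-1}\end{smallmatrix}\right)$ with $\alpha\in\finfldq{q}^*$ fixes $B$, multiplies $A$ by $\alpha^2$ and $C$ by $\alpha^{-2}$, so it preserves all degrees and hence condition (1) while scaling $\sgn(A)$ by an arbitrary nonzero square; thus I can force $\sgn(A)\in\{1,h\}$. This settles (2) when $|A|<|C|$, which is the only possibility for imaginary $f$ since $\deg D$ odd precludes $|A|=|C|$. If $f$ is unusual with $|A|=|C|$, set $m=\deg A=\deg C=\tfrac12\deg D$; as $\deg B<m$, the degree-$m$ coefficients of $A,B,C$ form a binary quadratic form $g=(\sgn(A),0,\sgn(C))$ over $\finfldq{q}$ which transforms covariantly under constant matrices and, by the normalization of $D$, has $D(g)=-4\sgn(A)\sgn(C)=\sgn(D)=h$. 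Since $h$ is a non-square, $g$ is anisotropic, hence represents every element of $\finfldq{q}^*$; picking $\alpha,\gamma$ with $g(\alpha,\gamma)=1$ and then, because $\sgn(A)\alpha^2+\sgn(C)\gamma^2=1\neq0$, solving the linear system $\sgn(A)\alpha\beta+\sgn(C)\gamma\delta=0$, $\alpha\delta-\beta\gamma=1$ for $\beta,\delta$, I obtain $N=\left(\begin{smallmatrix}\alpha&\beta\\\gamma&\delta\end{smallmatrix}\right)\in\genlink{\finfldq{q}}$ whose action sends the leading form $g$ to $g\circ N=(1,0,-h/4)$. Hence $f\circ N$ has $\sgn(A)=1$, $\deg B<m$ and $\deg C=m$, so (1) and (2) both hold.

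\emph{Condition (3), and the main difficulty.} Finally, if $B\neq0$ with $\sgn(B)\notin S$, I replace $f$ by $f\circ\left(\begin{smallmatrix}1&0\\0&-1\end{smallmatrix}\right)$, negating $B$ and fixing $A,C$; since $a\in S\iff-a\notin S$, the result satisfies all of (1)--(3). The one genuinely delicate point is the unusual case $|A|=|C|$: there, plain size reduction together with scaling and swapping cannot in general produce $\sgn(A)=1$ --- for example when $\sgn(A)$ and $\sgn(C)$ are both non-squares, which can happen for $q\equiv3\pmod4$ --- so one is forced to use the anisotropy, hence universality, of the leading binary quadratic form $g$ over $\finfldq{q}$, together with a careful verification that the auxiliary constant matrix $N$ keeps the form size-reduced. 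Everything else reduces to the routine coefficient computations from \eqref{A1form}--\eqref{C1form} and Proposition~\ref{FstMajCub}.
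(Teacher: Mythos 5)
Your proof is correct and follows essentially the same route as the paper: standard translate-and-swap reduction for condition (1), constant diagonal scaling for condition (2) when $|A|<|C|$, a determinant-one constant matrix normalizing the leading coefficients when $|A|=|C|$, and the matrix $\left(\begin{smallmatrix}1&0\\0&-1\end{smallmatrix}\right)$ for condition (3). Your use of the universality of the anisotropic leading form $g=(\sgn(A),0,\sgn(C))$ is just a repackaging of the paper's appeal to the surjectivity of the norm map of $\finfldq{q}(\sqrt{h})/\finfldq{q}$, so the key step is the same.
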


\begin{proof}
The procedure to find $f$ satisfying property (1) above is completely analogous to the one for integral binary cubic forms; see Algorithms 5.1-5.3, page 87 of Buchmann and Vollmer \cite{BuchVoll}, or \cite{Pieter2}.  Now, let $f = (A,B,C)$ be a binary quadratic form satisfying condition $(1)$ of Definition \ref{partiallyreduced}.
If $|A| < |C|$ and $\sgn(A) \neq 1$ or $h$, then we replace $f(x,y)$ by $f(\epsilon^{-1}x,\epsilon y)$, where $\epsilon^2 = \sgn(A)$ if $f$ is positive definite and $\epsilon^2 = \sgn(A) h^{-1}$ if $f$ is negative definite.  In other words, replace $f$ with $f \circ M$, where $M = \left( \begin{smallmatrix} \epsilon^{-1} & 0 \\ 0 & \epsilon \end{smallmatrix} \right)$.  This will yield a form satisfying $|B| < |A| \leq |C|$ and $\sgn(A) \in \{1, h\}$.

If $|A| = |C|$, then consider the norm map from $\finfldq{q}(\sqrt{h})$ down to $\finfldq{q}$; this map is always surjective (see for example Theorem 2.28, p.\ 54 of \cite{LidlNied}). Thus, $\sgn(A)^{-1} \in \finfldq{q}^*$ is the norm of some element $\alpha + \epsilon \sqrt{h}$ in $\finfldq{q}(\sqrt{h})$ ($\alpha, \epsilon \in \finfldq{q}$). It is now easy to see from equations (\ref{A1form})--(\ref{C1form}) that the transformation matrix
$\left(
\begin{smallmatrix}
\alpha & h \epsilon/2 \\2 \sgn(A) \epsilon & \sgn(A) \alpha
\end{smallmatrix}
\right)$
of determinant $1$ yields a binary quadratic form $f' = (A',B',C')$ with $\sgn(A') = 1$ and $|B'| < |A'| = |C'|$.

Finally, If $B \neq 0$, then $\sgn(B) \in S$ is achieved by transforming $f$ with $J = \left(
\begin{smallmatrix}
1 & 0 \\ 0 & -1
\end{smallmatrix}
\right)$ if necessary.
\end{proof}
 
If $f = (A,B,C)$ is a partially reduced form with $|A|=|C|$ and $-1$ is a non-square in $\finfldq{q}$, then $-4/h = \sgn(C)^{-1}$ is a square in $\finfldq{q}$, say $-4/h = \epsilon^2$. Then $(A,B,C)$ is equivalent to $(\epsilon^2 C, B, \epsilon^{-2} A)$ via the matrix
$\left(
\begin{smallmatrix}
      0       &      \epsilon^{-1} \\
 \epsilon  &     0     
\end{smallmatrix}
\right)
$.  
Hence, additional normalization conditions will be
needed to obtain a unique representative in each equivalence class in the case when $|A| = |C|$.  
We now characterize when two partially reduced binary quadratic forms are equivalent. 

\begin{theorem}
Let $f=(A,B,C)$ and $f'=(A',B',C')$ be two partially reduced binary quadratic forms of the same discriminant.  Then the following hold:
\begin{enumerate}
\item If $|A| < |C|$ and $|A'| < |C'|$, then $f$ and $f'$ are equivalent if and only if $f=f'$.
\item If $|A| < |C|$ and $|A'| = |C'|$ then $f$ and $f'$ are not equivalent.
\item If $|A| = |C|$ and $|A'| = |C'|$ then $f$ and $f'$ are equivalent with $f' = f \circ M$ if and only if 
\[M =  \left( \begin{array}{cc} \alpha & \beta \\ 4u \beta/h & u \alpha \end{array}   \right)  \]
where $\alpha,\beta \in \finfldq{q}$, $\alpha^2 - (4/h) \beta^2 =1$, $u := \det(M) = \pm1$, and if $B' \neq 0$, then $u$ is determined by the condition $\sgn(B') \in S$.
\end{enumerate}
\label{quadeqthm}
\end{theorem}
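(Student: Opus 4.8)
Everything rests on one observation about partially reduced forms, which I would prove first and call the \emph{minimum lemma}: if $f=(A,B,C)$ is a partially reduced imaginary or unusual binary quadratic form, then $|f(x,y)|\ge |A|$ for every primitive $(x,y)\in\polyringq{q}^2$, with equality holding \emph{exactly} for $(x,y)$ with $x\in\finfldq{q}^*$ and $y=0$ when $|A|<|C|$, and \emph{exactly} for $(x,y)\in\finfldq{q}^2\setminus\{(0,0)\}$ when $|A|=|C|$. The proof is elementary: for $x,y\ne 0$ the inequalities $|B|<|A|\le|C|$ give $|Bxy|<\max(|Ax^2|,|Cy^2|)$, so $|f(x,y)|=|Ax^2+Cy^2|\ge|A|$ whenever $|Ax^2|\ne|Cy^2|$; in the remaining case $|Ax^2|=|Cy^2|$ one invokes the hypothesis on $D$. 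If $\deg(D)$ is odd then $\deg(A)-\deg(C)$ is odd and this case cannot occur; if $\deg(D)$ is even with $\sgn(D)$ a non-square, then $\sgn(D)=-4\,\sgn(A)\,\sgn(C)$ (since $2\deg(B)<\deg(D)$), so $-\sgn(C)/\sgn(A)$ is a non-square, whence the leading coefficients of $Ax^2$ and $Cy^2$ cannot cancel and again $|f(x,y)|=|Ax^2|\ge|A|$. Keeping track of when each inequality is tight gives the equality characterization. This is precisely where the real case has to be excluded.

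\textbf{Reduction to the three cases.} Suppose $f'=f\circ M$ with $M=\left(\begin{smallmatrix}\alpha&\beta\\\gamma&\delta\end{smallmatrix}\right)\in\genlink{\polyringq{q}}$ and $u:=\det M$. Since $f$ and $f'$ have the same discriminant and $D(f\circ M)=u^2D(f)$ by Proposition~\ref{FstMajCub}, we get $u^2=1$, so $u=\pm1$. Now $A'=f(\alpha,\gamma)$ with $(\alpha,\gamma)$ primitive (its gcd divides $u$), so the minimum lemma gives $|A'|\ge|A|$; applied to $f=f'\circ M^{-1}$, whose first column $u^{-1}(\delta,-\gamma)$ is also primitive, it gives $|A|\ge|A'|$. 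Hence $|A|=|A'|$, and since $|A||C|=|D(f)|=|D(f')|=|A'||C'|$ for partially reduced forms, also $|C|=|C'|$. Part~2 is now immediate: $|A|<|C|$ and $|A'|=|C'|$ would give $|A|=|A'|=|C'|=|C|$, a contradiction.

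\textbf{Part 1.} Assume $|A|<|C|$. The equality clause of the minimum lemma applied to $|f(\alpha,\gamma)|=|A'|=|A|$ forces $\gamma=0$ and $\alpha\in\finfldq{q}^*$; applied to $f=f'\circ M^{-1}$ it forces $\delta\in\finfldq{q}^*$. Then (\ref{B1form}) gives $B'=\alpha(2A\beta+B\delta)$, and since $|B\delta|=|B|<|A|$ and $|B'|<|A'|=|A|$, we must have $|2A\beta|<|A|$, i.e.\ $\beta=0$. So $M$ is diagonal with constant entries, and $\sgn(A')=\sgn(A)\alpha^2\in\{1,h\}$ with $h$ a non-square forces $\alpha=\pm1$, and similarly $\delta=\pm1$. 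If $B\ne0$, then $\sgn(B')=u\,\sgn(B)$, and since $a\in S\iff-a\notin S$ the requirement $\sgn(B')\in S$ forces $u=1$, hence $\delta=\alpha$ and $M=\pm I$, so $f'=f\circ M=f(\pm x,\pm y)=f(x,y)$; if $B=0$ then $A'=A$, $B'=0=B$, $C'=C$ directly. In all cases $f=f'$.

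\textbf{Part 3 and the main obstacle.} Assume $|A|=|C|$ (so $\deg(D)$ is even and $|A|=|C|=|D|^{1/2}$). Applying the equality clause of the minimum lemma to $|f(\alpha,\gamma)|=|A|$ and to $|f(\beta,\delta)|=|C|=|A|$ puts \emph{all four} entries of $M$ in $\finfldq{q}$, so $M\in\genlink{\finfldq{q}}$. Comparing the coefficient of $t^{\deg(D)/2}$ on both sides of (\ref{A1form})--(\ref{C1form}) --- using $\deg(A)=\deg(C)=\deg(D)/2>\deg(B)$, $\sgn(A)=\sgn(A')=1$ (partial reducedness), $\sgn(C)=\sgn(C')=-h/4$ (forced by $\sgn(D)=-4\sgn(A)\sgn(C)=h$), and that $|B'|<|A'|$ kills the top coefficient of $B'$ --- produces exactly the relations $\alpha^2-\tfrac{h}{4}\gamma^2=1$, $\alpha\beta=\tfrac{h}{4}\gamma\delta$, $\beta^2-\tfrac{h}{4}\delta^2=-\tfrac{h}{4}$. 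These say precisely that $M$ is an $\finfldq{q}$-automorph of the auxiliary binary quadratic form $g_0=(1,0,-h/4)$ of discriminant $h$. Solving this system --- equivalently, parametrizing the automorph group of $g_0$ --- yields exactly the matrices $M=\left(\begin{smallmatrix}\alpha&\beta\\4u\beta/h&u\alpha\end{smallmatrix}\right)$ with $\alpha^2-(4/h)\beta^2=1$ and $u=\pm1$; conversely each such $M$ satisfies the three relations, and one checks directly that $f\circ M$ meets conditions (1) and (2) of partial reducedness. Finally, $M$ and $M\left(\begin{smallmatrix}1&0\\0&-1\end{smallmatrix}\right)$ carry $f$ into forms with middle coefficients $B'$ and $-B'$, so when $B'\ne0$ exactly one sign of $u$ makes condition (3), $\sgn(B')\in S$, hold; this is the asserted dependence of $u$ on $\sgn(B')$. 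The non-routine work is the minimum lemma and the careful bookkeeping of its equality cases --- in particular the non-cancellation step in the unusual case, which is exactly where the hypothesis on $\sgn(D)$ enters --- together with, in Part~3, recognizing the three relations as the defining equations of $\mathrm{Aut}(g_0)$ and tracing how the $S$-normalization of $B'$ fixes the sign of $u$; everything else is routine computation with the transformation formulas over $\finfldq{q}$.
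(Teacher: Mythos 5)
Your proof is correct, and it reaches the same three structural conclusions as the paper's argument by the same overall architecture (force $\gamma=0$ and then $\beta=0$ when $|A|<|C|$; force all four entries of $M$ into $\finfldq{q}$ and then extract the three leading-coefficient relations when $|A|=|C|$), but the engine you use is different. The paper works from the completed-square identities (\ref{sinfor1})--(\ref{sinfor4}), e.g.\ $4uAA' = (2A\alpha+B\gamma)^2 - D\gamma^2$, and applies the non-cancellation argument (odd degree, or non-square $\sgn(D)$) directly to those; you instead front-load a clean \emph{minimum lemma} --- a partially reduced form attains its minimum absolute value $|A|$ exactly at constant primitive vectors --- and read off everything from $A'=f(\alpha,\gamma)$, $C'=f(\beta,\delta)$ and the primitivity of the columns of $M$. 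The non-cancellation step is the same in both (it is exactly where the real case breaks), but your packaging is the classical Lagrange--Gauss ``values represented by a definite form'' argument, which makes the equality $|A|=|A'|$ and hence Part 2 fall out immediately and with less computation; the paper's identities have the minor advantage of also delivering the determinant bookkeeping in the same breath. Two small points: in Part 3 you assert rather than carry out the solution of the system $\alpha^2-\tfrac{h}{4}\gamma^2=1$, $\alpha\beta=\tfrac{h}{4}\gamma\delta$, $\beta^2-\tfrac{h}{4}\delta^2=-\tfrac{h}{4}$ (the paper solves it explicitly to get $\gamma=4u\beta/h$, $\delta=u\alpha$, including the degenerate case $\gamma=0$ giving $\pm I,\pm J$); this is genuinely routine, but should be written out in a final version. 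Likewise the converse verification that each such $M$ preserves partial reducedness deserves a sentence of computation rather than an assertion.
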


\begin{proof}

Suppose that $f' = f \circ M$, with $M= \left(
\begin{smallmatrix}
      \alpha       &      \beta \\
 \gamma  &     \delta
\end{smallmatrix}
\right)$ and $u := \det(M)= \alpha\delta - \beta\gamma \in \finfldq{q}^*$.  Then $ u= \pm 1$ by Proposition \ref{FstMajCub}, since $f,f'$ have the same discriminant.

Since $(A',B',C') = f(\alpha x + \beta y, \gamma x + \delta y)$, equations (\ref{A1form})-(\ref{C1form}) hold.  Since $f = f' \circ M^{-1}$, we also obtain:
\begin{eqnarray}
uA & = & A' \delta^2 - B' \gamma \delta + C' \gamma^2, \label{invrel1} \\
uB & = & -2A' \beta \delta + B'(\alpha \delta + \beta \gamma) - 2C' \alpha \gamma, \label{invrel2}   \\ 
uC & = & A'\beta^2 - B' \alpha \beta + C' \alpha^2.  \label{invrel3}  
\end{eqnarray}

\noindent Forming appropriate products of (\ref{A1form}), (\ref{C1form}), (\ref{invrel1}) and (\ref{invrel3}) yields:
\begin{eqnarray}
4uAA' & = & (2A\alpha + B\gamma)^2 - D\gamma^2, \label{sinfor1} \\
4uCA' & = & (2C\gamma + B\alpha)^2 - D\alpha^2, \label{sinfor2}  \\
4uAC' & = & (2A\beta + B\delta)^2 - D\delta^2,  \label{sinfor3}  \\
4uCC' & = & (2C\delta - B \beta)^2 - D\beta^2. \label{sinfor4}
\end{eqnarray}

\noindent Suppose first that $|A| < |C|$, so $|A| < |D|^{1/2}$.
Then by equation (\ref{sinfor1}), $|A'|\leq |D|^{1/2}$ and
\[|(2A\alpha + B\gamma)^2 - D\gamma^2 | < |D|.\]
Since the leading terms of the expressions $(2A\alpha + B\gamma)^2$ and $D\gamma^2$ on the left hand side of the above inequality cannot cancel, this forces $\gamma = 0$. Thus $\det(M) = \alpha\delta$, and hence $\alpha,\delta \in \finfldq{q}^*$.  
Now (\ref{A1form}) implies $|A| = |A'|$. Then $|D| = |AC| = |A'C'|$ precludes the possibility that $|A'| = |C'|$, which proves part (2) of theorem. Furthermore, (\ref{B1form}), together with $|B|, |B'| < |A|$ forces $\beta = 0$.  Thus $\beta = \gamma = 0$.  Then equations (\ref{A1form})-(\ref{C1form}) yield
$A'  =  A \alpha^2$, 
$B'  =  B (\alpha \delta)$,
$C'  =  C \delta^2$.

It follows that $\det(M) = \alpha \delta = \pm 1$.   Since both $\sgn(A)$ and $\sgn(A')$ are either squares (in which case they are both $1$) or non-squares (in which case they are both $h$), we thus obtain $\alpha^2 = 1$.  Hence $\alpha = \pm 1$.  This yields four possibilities for the matrix $M$:  $\pm I, \pm J$, where $J =  \left( \begin{smallmatrix} 1 & 0 \\ 0 & -1    \end{smallmatrix}\right)$.  If $B = 0 = B'$, then $f = f'$.  If $B \neq 0$, then both $\sgn(B), \sgn(B') \in S$, which means that $\alpha \delta = 1$.  This leaves $M = \pm I$, hence $f = f'$, as desired, completing the proof of part $(1)$.

To prove part (3), suppose now that $|A| = |C| = |A'| = |C'| = |D|^{1/2}$; in particular, $D$ is an unusual discriminant.  First, we deduce that $\alpha,\beta,\gamma,\delta \in \finfldq{q}$ under this assertion.
To see this, note that the absolute value of each of the left-hand sides
of equations (\ref{sinfor1})--(\ref{sinfor4}) above equals $|D|$.    Note also that there cannot be cancellation of leading terms on the 
right-hand side of each of the above equations, since $\sgn(D)$ is a non-square. Thus, the only way that (\ref{sinfor1})-(\ref{sinfor4}) can hold is if $\alpha,\beta,\gamma,\delta \in \finfldq{q}$.  Recall that $\sgn(A) = \sgn(A') = 1$, $\sgn(D) = h$ and thus $\sgn(C) = \sgn(C') = -h/4$.
If we compare the coefficients of $t^{\deg(A)}$ of both sides
of Equations (\ref{A1form})--(\ref{C1form}), we obtain

\begin{eqnarray}
1 & = & \alpha^2 + \left(\frac{-h}{4}\right)\gamma^2,  \label{sgnform1}\\
0 & = &  \alpha \beta + \left(\frac{-h}{4}\right)\gamma \delta , \label{sgnform2}\\
1 & = & \left(\frac{-4}{h} \right) \beta^2 + \delta^2. \label{sgnform3} 
\end{eqnarray}

If $\gamma = 0$, then we deduce as before that $M = \pm I$ or $M = \pm J$, where $J=\left(  \begin{smallmatrix}  1 & 0 \\ 0 & -1\end{smallmatrix}\right)$.  These two cases can only occur when $B = 0$. 

If $\gamma \neq 0$, then from equation (\ref{sgnform2}), we obtain 
\[\delta = \frac{4}{h}\frac{\alpha \beta}{\gamma}.\]
\noindent From equations (\ref{sgnform3}) and (\ref{sgnform1}), it follows that
\[
1 = \left(\frac{-4}{h}\right) \beta^2 + \left(\frac{16}{h^2}\right) \frac{\alpha^2 \beta^2}{\gamma^2} =  \frac{16}{h^2} \frac{\beta^2}{\gamma^2}.\]
\noindent Hence, $\gamma^2 = (16/h^2)\beta^2$ and so $\gamma = \pm (4/h) \beta$.  Write $\gamma = e (-4/h)\beta$ with $e = \pm 1$.  
Then \[\delta = \frac{4}{h} \frac{\alpha \beta}{\gamma} = -e \alpha.\]

\noindent Then by (\ref{sgnform1}),
\[u = \alpha \delta - \beta \gamma = -e \, \alpha^2 - e\, \frac{-h}{4}\gamma^2 = -e \, \left( \alpha^2 + \frac{-h}{4} \gamma^2\right) = -e.\]

\noindent Hence
\[M = \left( \begin{array}{cc} \alpha & \beta \\ (4 u  \beta)/h & u \alpha \end{array}   \right)  \]
with $\alpha,\beta \in \finfldq{q}$, $\alpha^2 - (4/h) \beta^2 =1$ and $u = \det(M) = \pm1$ as desired.  Any change in $u$ between 1 and $-1$ clearly changes the sign of $B'$ if $B' \neq 0$, so $u$ is determined by $\sgn(B')$ in this case.
\end{proof}

We note that with the same notion of reducedness, Theorem \ref{quadeqthm} is not true for real binary quadratic forms, as one cannot deduce that $\alpha,\beta,\gamma,\delta \in \finfldq{q}$ in the same way as in the proof of Theorem \ref{quadeqthm}. 

Part (3) of Theorem \ref{quadeqthm} now yields the following:

\begin{corol}
Any unusual partially reduced binary quadratic form $f=(A,B,C)$ satisfying $|A|=|C|$ is equivalent
to exactly $q+1$ distinct partially reduced quadratic form with the same discriminant. 
\label{qplus1reps}
\end{corol}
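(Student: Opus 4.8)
The plan is to read the count off directly from part~(3) of Theorem~\ref{quadeqthm}. First I would observe that, since $f$ has $|A| = |C|$, parts~(1) and~(2) of Theorem~\ref{quadeqthm} (the latter applied with the roles of $f$ and $f'$ interchanged) force \emph{every} partially reduced form $f' = (A',B',C')$ of discriminant $D$ equivalent to $f$ to satisfy $|A'| = |C'|$ as well. Hence, by part~(3) of Theorem~\ref{quadeqthm}, such an $f'$ is precisely a form of the shape $f \circ M$ with
\[ M = \left( \begin{array}{cc} \alpha & \beta \\ 4u\beta/h & u\alpha \end{array} \right), \qquad \alpha,\beta \in \finfldq{q}, \quad \alpha^2 - (4/h)\beta^2 = 1, \quad u = \det(M) = \pm 1, \]
where, whenever $B' \neq 0$, the sign $u$ is pinned down by the requirement $\sgn(B') \in S$. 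So the corollary reduces to counting the distinct forms so obtained.

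Next I would count the admissible parameter pairs $(\alpha,\beta)$. Because $h$ is a non-square in $\finfldq{q}^*$ and $4$ is a square, $4/h$ is a non-square, so $X^2 - (4/h)$ is irreducible over $\finfldq{q}$ and $\finfldq{q}(\sqrt{4/h}) \cong \finfldq{q^2}$. Identifying $(\alpha,\beta)$ with $\alpha + \beta\sqrt{4/h}$, the relation $\alpha^2 - (4/h)\beta^2 = 1$ says precisely that $\alpha + \beta\sqrt{4/h}$ lies in the kernel of the norm map $N_{\finfldq{q^2}/\finfldq{q}}\colon \finfldq{q^2}^* \to \finfldq{q}^*$. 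As this norm map is surjective (Theorem~2.28 of~\cite{LidlNied}, already invoked in the proof of Proposition~\ref{preeqquad}), its kernel has order $(q^2-1)/(q-1) = q+1$, so there are exactly $q+1$ admissible pairs $(\alpha,\beta)$.

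Finally I would match these $q+1$ pairs with the partially reduced forms equivalent to $f$. For a fixed admissible $(\alpha,\beta)$, substituting $\gamma = 4u\beta/h$ and $\delta = u\alpha$ into (\ref{A1form})--(\ref{C1form}) and comparing leading coefficients (as in the derivation of (\ref{sgnform1})--(\ref{sgnform3}) in the proof of Theorem~\ref{quadeqthm}) shows that $f \circ M$ has $\sgn(A') = \alpha^2 - (4/h)\beta^2 = 1$ and $|A'| = |C'| = |D|^{1/2}$, while the leading terms of the three summands of $B'$ in (\ref{B1form}) cancel, forcing $\deg B' < \deg A'$; thus conditions~(1) and~(2) of Definition~\ref{partiallyreduced} hold automatically, and, as noted in the proof of Theorem~\ref{quadeqthm}, switching $u$ reverses $\sgn(B')$ when $B' \neq 0$, so exactly one value of $u$ makes condition~(3) hold (the two values coinciding when $B' = 0$). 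Consequently each admissible $(\alpha,\beta)$ determines a single partially reduced form $f_{(\alpha,\beta)} \sim f$, and by part~(3) of Theorem~\ref{quadeqthm} every partially reduced form equivalent to $f$ occurs among the $f_{(\alpha,\beta)}$; that is, $(\alpha,\beta) \mapsto f_{(\alpha,\beta)}$ is surjective onto the set in question. The step I expect to be the main obstacle is injectivity: to conclude that the $q+1$ pairs give $q+1$ \emph{distinct} forms one must compute the stabiliser of $f$ inside the group $\genlink{\finfldq{q}}$ of constant matrices and show that, once the normalisations $\sgn(A) = 1$ and $\sgn(B) \in S$ are imposed, it introduces no further coincidences — again a matter of comparing leading coefficients along the lines of the proof of Theorem~\ref{quadeqthm}. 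Granting this, $(\alpha,\beta) \mapsto f_{(\alpha,\beta)}$ is a bijection and $f$ is equivalent to exactly $q+1$ distinct partially reduced forms of the same discriminant.
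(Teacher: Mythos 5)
Your proposal follows the same route as the paper's proof: everything is read off part~(3) of Theorem~\ref{quadeqthm}, and the heart of the matter is counting the solutions of $\alpha^2 - (4/h)\beta^2 = 1$. Your count via the kernel of the norm map $\finfldq{q^2}^* \to \finfldq{q}^*$ is a clean, self-contained alternative to the paper's citation of the $q+1$ points on a non-degenerate conic, and your check that each $f \circ M$ is again partially reduced is a detail the paper leaves implicit. Up to the final step, then, you have essentially reconstructed the published argument, which likewise stops after producing one equivalent form per conic point.

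The step you defer is, however, a genuine gap, and the stabiliser computation you propose cannot deliver the injectivity you want. A binary quadratic form is homogeneous of even degree, so $f \circ M = f \circ (-M)$ for every $M$, and negating $(\alpha,\beta)$ replaces $M_{\alpha,\beta,u} = \left(\begin{smallmatrix} \alpha & \beta \\ 4u\beta/h & u\alpha \end{smallmatrix}\right)$ by $-M_{\alpha,\beta,u}$ while leaving $B'$, and hence the selected value of $u$, unchanged. Thus the antipodal conic points $(\alpha,\beta)$ and $(-\alpha,-\beta)$ always yield the \emph{same} partially reduced form: the stabiliser of $f$ contains $-I = M_{-1,0,1}$, and this coincidence survives all the normalisations. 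The map $(\alpha,\beta) \mapsto f_{(\alpha,\beta)}$ is therefore (at least) two-to-one. Concretely, over $\finfldq{5}$ with $h = 2$, the partially reduced form $f = (t, 1, 2t)$ of discriminant $2t^2 + 1$ produces from the six conic points only the three forms $(t,1,2t)$, $(t+3, 2, 2t+4)$ and $(t+2, 2, 2t+1)$. So the obstacle you flag is not just unproved but false as you state it; the number of distinct partially reduced forms here is $(q+1)/2$, not $q+1$ (and can drop further when $f$ has the non-trivial automorphisms of Theorem~\ref{distinctalphas}). The paper's own proof is silent on distinctness as well, so a correct argument must either count conic points (equivalently, transformation matrices modulo $\pm I$ and the choice of $u$) rather than distinct forms, or adjust the stated constant.
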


\begin{proof}
Consider the equation $\alpha^2 - (4/h)\beta^2 = 1$ in part (3) of Theorem \ref{quadeqthm}.  This equation always has a solution, namely $(\alpha,\beta) = (1,0)$.  
Since $4/h$ is a non-square in $\finfldq{q}^*$, this equation is a non-degenerate conic in $\alpha$ and $\beta$.  Since this conic has at least one solution, it has $q+1$ distinct solutions (see Casse \cite{Casse1}, page 140 or Hirschfeld \cite{Hirsch1}, page 141).  
Each of these $q+1$ solutions yields a binary quadratic form $f'$ equivalent to $f$, as given in Theorem \ref{quadeqthm}.
\end{proof}

\begin{definition}
Let $f=(A,B,C)$ be an imaginary or unusual partially reduced binary quadratic form.
\begin{enumerate}
\item If $|A| <  |C|$, then $f$ is called \emph{reduced}. \label{cond1}
\item If $|A| = |C|$, then $f$ is called \emph{reduced} if it is lexicographically smallest amongst all the partially reduced forms in its equivalence class.
\end{enumerate}
\label{imagRedNew}
\end{definition}

\begin{theorem}
\begin{enumerate}
\item (Existence and Uniqueness) Every imaginary or unusual binary quadratic form over \polyringq{q} is
equivalent to a unique reduced binary quadratic form with the same discriminant.
\item (Finiteness) There are only finitely many reduced imaginary or unusual binary quadratic forms
  with fixed discriminant $D$.
\end{enumerate} 
\label{UnusSummary}
\end{theorem}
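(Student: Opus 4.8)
The plan is to obtain both parts of Theorem~\ref{UnusSummary} as essentially formal consequences of Proposition~\ref{preeqquad}, Theorem~\ref{quadeqthm}, and Corollary~\ref{qplus1reps}; the substantive work has already been done in those statements, so what remains is to assemble them and to keep the case split $|A|<|C|$ versus $|A|=|C|$ straight.

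For the existence half of part~(1), I would begin with an arbitrary imaginary or unusual binary quadratic form $f$ over $\polyringq{q}$. By Proposition~\ref{preeqquad}, $f$ is equivalent to a partially reduced form $g=(A,B,C)$ of the same discriminant $D$. If $|A|<|C|$, then $g$ is already reduced by Definition~\ref{imagRedNew}(1) and we are done. If $|A|=|C|$ (which, as noted earlier, forces $D$ to be unusual), then by Corollary~\ref{qplus1reps} the equivalence class of $g$ contains exactly $q+1$ partially reduced forms, all of discriminant $D$; since this set is finite and nonempty, it has a lexicographically smallest element, which is reduced by Definition~\ref{imagRedNew}(2) and equivalent to $f$ with discriminant $D$.

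For uniqueness, suppose $f=(A,B,C)$ and $f'=(A',B',C')$ are both reduced, equivalent, and of discriminant $D$. The first point is that they must fall into the same case of Definition~\ref{imagRedNew}: by Theorem~\ref{quadeqthm}(2), a partially reduced form with $|A|<|C|$ is never equivalent to one with $|A'|=|C'|$, so either $|A|<|C|$ and $|A'|<|C'|$, or $|A|=|C|$ and $|A'|=|C'|$. In the first case Theorem~\ref{quadeqthm}(1) gives $f=f'$ directly. In the second case $f$ and $f'$ lie in a single equivalence class and each is, by Definition~\ref{imagRedNew}(2), the lexicographically smallest partially reduced form in that class, whence $f=f'$. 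This completes part~(1). I expect the only point demanding a little care to be exactly this case analysis — using Theorem~\ref{quadeqthm}(2) to rule out two equivalent reduced forms straddling the two cases, and noting that the lexicographic minimum is taken over the finite nonempty set supplied by Corollary~\ref{qplus1reps} (once a total order on $\polyringq{q}$ has been fixed).

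For part~(2) (finiteness), observe that a reduced form is in particular partially reduced, hence satisfies $|B|<|A|\le|C|$, equivalently $|B|<|A|\le|D|^{1/2}$. Thus $\deg(A)\le\deg(D)/2$ and $\deg(B)<\deg(A)$, which leaves only finitely many choices for the pair $(A,B)$ over the finite field $\finfldq{q}$; and for each such pair the identity $D=B^2-4AC$ (recall $\charfld(\finfldq{q})\ge5$, so $4$ is invertible) determines $C$ uniquely. Hence there are only finitely many partially reduced, and a fortiori only finitely many reduced, binary quadratic forms of discriminant $D$.
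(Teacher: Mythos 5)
Your proposal is correct and takes essentially the same route as the paper: the paper dismisses part (1) as an immediate consequence of Proposition~\ref{preeqquad}, Theorem~\ref{quadeqthm} and Corollary~\ref{qplus1reps} (which is exactly the assembly you carry out), and proves part (2) by the same observation that $|B|<|A|\le|C|$ together with $|AC|=|D|$ leaves only finitely many triples. Your extra touch of recovering $C$ uniquely from $D=B^2-4AC$ is a harmless, slightly cleaner variant of the paper's counting step.
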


\begin{proof}
Part (1) is obvious. To obtain part (2), note that if $f = (A,B,C)$ is reduced, then $|B| < |D| = |AC|$. It is clear that only a finite number of triples $(A,B,C)$ in $\polyringq{q}^3$ can satisfy these conditions. 
\end{proof}

Following Buell \cite{Buell1}, any matrix $M$ such that $f=f\circ M$ is called an \emph{automorphism} of $f$.  Automorphisms of binary quadratic forms need to be considered in the development of the reduction theory of binary cubic forms, and can only occur in very specific cases if a binary quadratic form is reduced:

\begin{theorem}
Suppose $f = (A,B,C)$ is a partially reduced imaginary or  unusual binary quadratic form such that $f = f \circ M$ for some $M \in \genlink{\polyringq{q}}$.  If $|A| < |C|$, then $M = \pm I$, or  $M= \pm J$ if $B=0$, where $J = \left( \begin{smallmatrix} 1 & 0 \\ 0 & -1 \end{smallmatrix} \right)$.     If $|A| = |C|$, then $M = \left( \begin{smallmatrix} \alpha & \beta \\ -u \lambda \beta & u \alpha \end{smallmatrix} \right)$ where $\alpha,\beta \in \finfldq{q}$, $\lambda = -4/h$ and $u := \det(M) = \pm1$.  
If $\beta \neq 0$, then $B \neq 0$, and $\alpha = u\beta(C \lambda - A)/B \in \finfldq{q}$.  Hence, such non-trivial transformations exist if and only if $((C \lambda - A)/B)^2 + \lambda$ is a square in $\finfldq{q}^*$, in which case there are two such non-trivial automorphisms.
\label{distinctalphas}
\end{theorem}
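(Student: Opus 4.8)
The plan is to split into the two cases $|A|<|C|$ and $|A|=|C|$ and reuse the machinery already developed for Theorem~\ref{quadeqthm}, since an automorphism is just the special case $f'=f$ of the equivalence analysis there. In the case $|A|<|C|$, the argument from part~(1) of Theorem~\ref{quadeqthm} applies verbatim with $f'=f$: equation~(\ref{sinfor1}) forces $\gamma=0$, then (\ref{B1form}) together with $|B|<|A|$ forces $\beta=0$, and $\det(M)=\alpha\delta=\pm1$ together with $\sgn(A)=\sgn(A')$ gives $\alpha^2=1$. This leaves $M\in\{\pm I,\pm J\}$, and $M=\pm J$ is possible only when $B=0$ (otherwise $\sgn(B)\in S$ is violated by $B\mapsto -B$). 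So the first assertion is essentially a corollary of what is already proved.

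For the case $|A|=|C|$, I would again follow the proof of part~(3) of Theorem~\ref{quadeqthm} with $f'=f$: the non-Archimedean/non-square argument on (\ref{sinfor1})--(\ref{sinfor4}) shows $\alpha,\beta,\gamma,\delta\in\finfldq{q}$, and comparing leading coefficients gives the relations (\ref{sgnform1})--(\ref{sgnform3}), which (with $\lambda=-4/h$) yield $\gamma=-u\lambda\beta$ and $\delta=u\alpha$ with $u=\det M=\pm1$ and $\alpha^2-(4/h)\beta^2=1$. This already gives the claimed shape of $M$. The new content is to pin down $\alpha$ in terms of $\beta$ using the hypothesis $f'=f$ (not merely $f'$ partially reduced). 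Here I would use equation~(\ref{B1form}) with $B'=B$: substituting $\gamma=-u\lambda\beta$, $\delta=u\alpha$ gives $B = 2A\alpha\beta + B(\alpha\cdot u\alpha + \beta\cdot(-u\lambda\beta)) + 2C(-u\lambda\beta)(u\alpha)$. Using $u^2=1$ and $\alpha^2-(4/h)\beta^2 = \alpha^2+\lambda\beta^2/(-1)\cdot(-1)$... more cleanly, $\alpha^2 - (4/h)\beta^2=1$ rewrites the middle term, and the identity $B(u\alpha^2 - u\lambda\beta^2) = B(u(\alpha^2+( -4/h)\beta^2)) $ wait — I would just expand carefully: the $B$-equation becomes $B = 2\beta\alpha(A - u^2\lambda C) + uB(\alpha^2 - \lambda\beta^2)$. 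Since $u^2=1$ and $\alpha^2-\lambda\beta^2 = \alpha^2-(-4/h)\beta^2$, which is \emph{not} the constraint unless we are careful about signs — the constraint is $\alpha^2-(4/h)\beta^2=1$, i.e. $\alpha^2+\lambda\beta^2=1$. So $\alpha^2-\lambda\beta^2 = 1-2\lambda\beta^2$, and one solves for $\alpha$: if $\beta\neq0$ then $B\neq0$ is forced (else the coefficient comparison collapses to the trivial case), and rearranging yields $\alpha = u\beta(C\lambda - A)/B$. This is the key identification. I would then substitute this expression for $\alpha$ back into $\alpha^2-(4/h)\beta^2=1$ and divide through by $\beta^2$ to get $((C\lambda-A)/B)^2 + \lambda = 1/\beta^2$, so a non-trivial automorphism ($\beta\neq0$) exists precisely when $((C\lambda-A)/B)^2+\lambda$ is a square in $\finfldq{q}^*$; and when it is, the two square roots $\pm1/\beta$ (equivalently, the two sign choices of $u$, since $u$ and $\beta$ are linked through $\alpha$) give exactly two non-trivial automorphisms.

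The step I expect to be the main obstacle is the bookkeeping in the $B$-coefficient equation: keeping the factor $-4/h$ versus $4/h$ straight (the paper uses $\lambda=-4/h$, but the conic constraint is $\alpha^2-(4/h)\beta^2=1$, i.e. $\alpha^2+\lambda\beta^2=1$), and verifying cleanly that $\beta\neq0$ forces $B\neq0$ rather than some degenerate sub-case, so that the division by $B$ is legitimate. Once the expression $\alpha=u\beta(C\lambda-A)/B$ is in hand, everything else — the square-criterion and the count of exactly two automorphisms — follows by elementary manipulation of the conic equation, so I would state those last steps briefly rather than belabor them.
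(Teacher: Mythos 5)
Your overall strategy --- specialize the equivalence analysis of Theorem~\ref{quadeqthm} to $f'=f$, which disposes of the case $|A|<|C|$ and yields the shape of $M$ when $|A|=|C|$ --- is exactly the paper's, and those parts are fine. The gap lies in the two pieces you yourself flag as the new content. First, the identity $\alpha=u\beta(C\lambda-A)/B$ does not follow by rearranging the $B$-coefficient equation (\ref{B1form}). Substituting $\gamma=-u\lambda\beta$, $\delta=u\alpha$ and $\alpha^2+\lambda\beta^2=1$ into (\ref{B1form}) with $B'=B$ gives $B=2\alpha\beta(A-C\lambda)+uB(1-2\lambda\beta^2)$, and solving this for $\alpha$ produces $\alpha=B(1-u+2u\lambda\beta^2)/\bigl(2\beta(A-C\lambda)\bigr)$, which is not the claimed formula; it is consistent with it only after the other coefficient relations are imposed (indeed, combining it with the $A$-equation forces $u=-1$). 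The paper instead uses equation (\ref{A1form}) with $A'=A$: there the substitution $\alpha^2-1=-\lambda\beta^2$ makes the equation factor as $\lambda\beta\bigl(\beta(C\lambda-A)-u\alpha B\bigr)=0$, which immediately gives the formula when $\beta\neq0$. (The $C$-equation (\ref{C1form}) factors equally cleanly; the $B$-equation is the one choice that does not.)

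Second, your justification that $\beta\neq0$ forces $B\neq0$ (``else the coefficient comparison collapses to the trivial case'') is not an argument. The paper's proof: if $B=0$ and $\beta\neq0$, the same factorization of (\ref{A1form}) gives $0=\lambda\beta^2(C\lambda-A)$, hence $A=\lambda C$ with $\lambda\in\finfldq{q}^*$; then $C$ divides $\gcd(A,B,C)$, and since $|C|=|D|^{1/2}$ with $D\notin\finfldq{q}$ we have $C\notin\finfldq{q}$, contradicting primitivity of $f$. Once these two points are repaired, your concluding steps --- substituting $\alpha=u\beta(C\lambda-A)/B$ into $\alpha^2+\lambda\beta^2=1$ to obtain $[((C\lambda-A)/B)^2+\lambda]\beta^2=1$, reading off the square criterion, and counting the two solutions for $\beta$ --- coincide with the paper's.
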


\begin{proof}
Let $M = \left( \begin{smallmatrix} \alpha & \beta \\ \gamma & \delta \end{smallmatrix} \right)$.  Since $f = f \circ M$, we obtain from equation (\ref{sinfor1}), with $f'$ replaced with $f$, that
\[4uA^2 = (2A\alpha + \gamma B)^2 - D\gamma^2. \]

\noindent If $|A| < |C|$, then $|A|^2 < D$, so $\gamma = 0$ and so we obtain from equation (\ref{A1form}) that $\alpha \in \finfldq{q}$ with $\alpha^2 = 1$.  Then, as in the proof of Theorem \ref{quadeqthm}, we obtain $M = \pm I$ or $\pm J$, where the latter two cases force $B=0$.

Now suppose that $|A| = |C|$.  Then $\gamma = (4 u \beta)/h$ and $\delta = u \alpha$ by the proof of Theorem \ref{quadeqthm}, $\alpha^2 - (4/h) \beta^2 = 1$ and $u = \pm 1$ is uniquely determined.  
If $\beta \neq 0$, then $B \neq 0$.  To see this, suppose $\beta \neq 0$ and $B=0$.  Then it follows from equation (\ref{A1form}) that 
\[0 = A(\alpha^2 -1) + C(\lambda^2 \beta^2) = A(-\lambda \beta^2) + C \lambda^2 \beta^2 = \lambda \beta^2 (C\lambda - A),\]
therefore $A = \lambda C$.  Since $D \notin \finfldq{q}$ and $f$ is assumed to be primitive by Proposition \ref{FstMajCub}(4), we have the required contradiction.  

Hence $B \neq 0$.  Again, by equation (\ref{A1form}), we obtain
\begin{eqnarray*} 
0 & = & A(\alpha^2 -1) - Bu\lambda\alpha\beta + C \lambda^2 \beta^2, \\
   & = & -A\lambda \beta^2 - Bu\lambda\alpha\beta + C \lambda^2 \beta^2, \\
   & = &  \lambda \beta \, (\beta (C\lambda - A) - u\alpha B).
\end{eqnarray*}
Therefore, $u \alpha B = \beta (C\lambda - A)$ and solving for $\alpha$ yields $\alpha = u \beta (C\lambda - A)/B$, as desired.  To see that $((C\lambda -A)/B)^2 + \lambda$ is a square in $\finfldq{q}^*$, we substitute $\alpha = u \beta (C\lambda - A)/B$ into $\alpha^2 + \lambda \beta^2 = 1$ to obtain 
\[ [ ((C \lambda - A)/B)^2 + \lambda ] \beta^2 = 1.\]
                                                                                
\noindent So such a non-trivial automorphism exists if and only if $((C \lambda - A)/B)^2 + \lambda$  is a square in $\finfldq{q}^*$. In that case, there are two solutions for $\beta$, and for each of these solutions, there is exactly one solution for $\alpha$, since $u$ is determined by $\sgn(B) \in S$.  This completes the proof.
\end{proof}

We note that if $|A|=|C|$, the case $\beta=0$ is still possible and yields the same four matrices as for $|A|<|C|$.

\section{Reduction of Binary Cubic Forms over \polyringq{q}}\label{sec:cubredn}

In this section, we describe the reduction theory for binary cubic forms.  Once again, this theory allows us to single out a unique representative for each equivalence class of binary cubic forms, in a similar way to the reduction theory for binary quadratic forms.   Throughout this section, $-3D$ is an imaginary or unusual discriminant such that $\sgn(-3D) \in \{1,h\}$.  Furthermore, $h$ and $S$ are as in Section \ref{BQFUnus}.  Recall that our irreducibility assumption forces $ad \neq 0$ for any binary cubic form $f = (a,b,c,d)$.

\begin{definition}
Let $f = (a,b,c,d)$ be a binary cubic form with imaginary or unusual Hessian $H_f = (P,Q,R)$.  
\begin{enumerate}
\item If $H_f$ is imaginary, then $f$ is \emph{reduced} if $H_f$ is reduced, $\sgn(a) \in S$,  and if $Q=0$, then $\sgn(d) \in S$.
\item  If $H_f$ is unusual, then $f$ is \emph{reduced} if $H_f$ is reduced, $\sgn(a) \in S$, if $Q=0$ then $\sgn(d) \in S$, and $f$ is the lexicographically smallest among all equivalent binary cubic forms with the same reduced Hessian $H_f$ that satisfy the previous conditions.
\end{enumerate}
\label{cubimagreduced2}
\end{definition}

The normalization on $\sgn(a)$ is needed because $f$ and $-f$ have the same Hessian.  The normalization of $\sgn(d)$ in the case when $Q = 0$ is required because in this case, $H_f$ has automorphisms  $\pm J$ with $J = \left( \begin{smallmatrix} 1 & 0 \\ 0 & -1 \end{smallmatrix}   \right)$, so $f = (a,b,c,d)$ and $f=(a,-b,c,-d)$ have the same reduced Hessian. 

In order to obtain a unique reduced binary cubic form in the case where the unusual Hessian is non-trivially equivalent to itself, we employ the same technique that was used for unusual binary quadratic forms.  That is, we adopt the convention of choosing the binary cubic form that is the smallest in terms of lexicographical order as specified in Section \ref{BQFUnus}.  Note that it is straightforward to detect whether or not a Hessian has non-trivial automorphisms:  by Theorem \ref{distinctalphas}, it simply requires checking whether or not $(4R/h + P)/Q)^2 - 4/h$ is a square in $\finfldq{q}^*$.  The lexicographical minimization condition eliminates ambiguity in case of non-trivial automorphisms of the Hessian. Collectively, the above conditions ensure uniqueness of reduced representatives:

\begin{theorem}
Every binary cubic form $f$ with imaginary or unusual Hessian is equivalent to a unique reduced binary cubic form with imaginary or unusual Hessian of the same discriminant.  
\label{Unusformequivtoredunus}
\end{theorem}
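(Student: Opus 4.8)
The plan is to transfer the existence-and-uniqueness result for imaginary and unusual binary quadratic forms (Theorem~\ref{UnusSummary}) to binary cubic forms through the Hessian, exploiting that by Definition~\ref{cubimagreduced2} a cubic form is reduced essentially when its Hessian is a reduced binary quadratic form and a few sign normalizations hold. For \emph{existence}: given $f=(a,b,c,d)$ of discriminant $D$, its Hessian $H_f$ is an imaginary or unusual binary quadratic form of discriminant $-3D$ by Proposition~\ref{Hessprops2}; by Theorem~\ref{UnusSummary}(1) it is equivalent to its unique reduced form, which has the same discriminant $-3D$, so by Proposition~\ref{FstMajCub}(1) any $M_0\in\genlink{\polyringq{q}}$ realizing that equivalence satisfies $\det M_0=\pm1$. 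Consequently the scalar factor $(\det M_0)^2$ in Proposition~\ref{Hessprops2}(1) equals $1$, and replacing $f$ by $f\circ M_0$ makes $H_f$ \emph{literally} a reduced quadratic form, not merely equivalent to one. I would then fix the remaining normalizations using only the transformations that stabilize this reduced Hessian, namely $\pm I$ together with $\pm J$ when $Q=0$ (where $J=\left(\begin{smallmatrix}1&0\\0&-1\end{smallmatrix}\right)$), which are the automorphisms described by Theorem~\ref{distinctalphas} in the case $|P|<|R|$: since $f\circ(-I)=-f$, $f\circ J=(a,-b,c,-d)$, and $a\in S\iff-a\notin S$, exactly one of these transformations produces a cubic with $\sgn(a)\in S$ and, when $Q=0$, also $\sgn(d)\in S$. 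Finally, in the unusual case with $|P|=|R|$, I would replace $f$ by the lexicographically smallest among the finitely many forms equivalent to it that share its reduced Hessian and satisfy the sign conditions; the outcome is reduced.

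For \emph{uniqueness}, suppose $f$ and $f'=f\circ M$ are both reduced of discriminant $D$. Then $(\det M)^6=1$ by Proposition~\ref{FstMajCub}(2), and since $(\det M)^2$ is a square and scaling a quadratic form by a square is the action of a scalar matrix, $H_{f'}=(\det M)^2(H_f\circ M)$ is equivalent to $H_f$; both being reduced, $H_{f'}=H_f$ by Theorem~\ref{UnusSummary}(1). Hence $M=\mu A$ for some $\mu\in\finfldq{q}^*$ and some automorphism $A$ of $H_f$, and Theorem~\ref{distinctalphas} then leaves only three possibilities, which I would rule out or identify in turn. If $A=\pm I$, then $M=eI$ and $f'=e^3f$; reducedness of $H_{f'}=e^6H_f=H_f$ forces $e^6=1$, so $e^3=\pm1$, and $\sgn(a),\sgn(a')\in S$ forces $e^3=1$, i.e.\ $f'=f$. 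If $A=\pm J$ (so $Q=0$), then $M=\mathrm{diag}(e,-e)$ and $f'=e^3(a,-b,c,-d)$; the same argument gives $e^3=1$, and then $\sgn(d')=-\sgn(d)\notin S$ contradicts reducedness of $f'$, so this case does not occur. If $|P|=|R|$ (the unusual sub-case), then $f$ and $f'$ are equivalent cubics with the same reduced Hessian satisfying the sign conditions, so each is by definition the lexicographically smallest such form and $f=f'$.

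The step I expect to be the main obstacle is the existence half, precisely the bookkeeping around the scalar factor of Proposition~\ref{Hessprops2}(1): one must be certain that the Hessian can be made \emph{literally} reduced within the equivalence class of $f$ (which is exactly why it is crucial that every form in play carries the fixed normalized discriminant, so that $\det M_0=\pm1$), and that the stabilizer of a reduced Hessian --- the automorphism group of Theorem~\ref{distinctalphas} together with the compatible scalar matrices --- is rich enough to enforce all of the remaining normalizations but no richer. This is also the point at which the non-Archimedean degree valuation does the real work: the rigidity of those stabilizers, which ultimately comes down to the impossibility of leading-term cancellation in the identities~(\ref{sinfor1})--(\ref{sinfor4}), is exactly what lets ``reduced'' pin down a unique representative in the imaginary and unusual cases while it fails for real forms.
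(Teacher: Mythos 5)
Your proposal is correct and follows essentially the same route as the paper: existence by first making the Hessian literally reduced (using that the determinant of the reducing matrix is $\pm 1$ so the scalar in Proposition~\ref{Hessprops2}(1) is trivial), then normalizing $\sgn(a)$ and, when $Q=0$, $\sgn(d)$, and finally invoking lexicographic minimality in the $|P|=|R|$ case; uniqueness by forcing equality of the reduced Hessians, absorbing the determinant into the matrix so that Theorem~\ref{distinctalphas} classifies the possibilities, and eliminating $-I$, $J$, $-J$ via the sign conditions on $a$ and $d$. The only cosmetic difference is that you track the scalar as $e^3$ where the paper writes $u^{-3}$ with $uM=\pm I,\pm J$, which is the same computation.
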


\begin{proof}
By Proposition \ref{Hessprops2} and Theorem \ref{UnusSummary}, $f$ is equivalent to a binary cubic form of the same discriminant with reduced Hessian, so we can assume without loss of generality that $H_f$ is reduced.

If $\sgn(a) \notin S$, then replace $f$ by $-f = f \circ (-I)$.  If $\sgn(a) \in S$, $Q=0$ and $\sgn(d) \notin S$ then replace $f$ by $f \circ J$, where $J =  \left(\begin{smallmatrix} 1 & 0 \\ 0 & -1  \end{smallmatrix} \right)$.  The resulting form, again denoted by $f$,  has reduced Hessian and satisfies the required normalization conditions on $a$ and $d$.

Finally, suppose that there exists a matrix $M \in \genlink{\polyringq{q}}$ with $H_f \circ M = H_f$ as described in Theorem \ref{distinctalphas}, such that $f \circ M$ also satisfies the required normalization conditions.  Then replace $f$ by the lexicographically smallest among $f$ and $f \circ M$ for all permissible choices of $M$.  The resulting binary cubic form is reduced.  Moreover, none of the above transformations change $D(f)$ or $H_f$.

To obtain uniqueness, suppose $f = (a,b,c,d)$ and $f' = (a',b',c',d')$ are reduced and equivalent.  Then there exists a matrix $M \in \genlink{\polyringq{q}}$ with $f' = f \circ M$.  Set $u = \det(M)$.  Then by Proposition \ref{Hessprops2}, $H_{f'} = u^2 H_f \circ M = H_f \circ(uM)$.   Hence $H_{f'}$ and $H_f$ are equivalent, via the transformation matrix $uM$. By assumption, $H_{f'}$ and $H_f$ are reduced and hence equal by Theorem \ref{UnusSummary}.  By Proposition \ref{Hessprops2}, $H_f$ has discriminant $-3D(f)$. 

Write $H_f = (P, Q, R)$.  If $|P|< |R|$, then by Theorem \ref{distinctalphas}, $uM = I$ or $-I$ or $J$ or $-J$,  where the latter two cases force $Q = 0$. Since $\det(uM) = u^2 \det(M) = u^3$, it follows that $u^3 = 1$ or $-1$.  Thus $a' = au^{-3}$ if $uM = I$ or $J$, and $a' = -au^{-3}$ if $uM = -I$ or $-J.$   If $uM = J$, then $u^3 = \det(uM) = \det(J) = -1$, and hence $a' = au^{-3} = -a$, which contradicts $\sgn(a), \sgn(a') \in S$.  If $uM = -I$, then $u^3 = 1$, and hence $a' = -au^{-3} = -a$, again a contradiction.  If $uM = -J$, then $u^3 = \det(-J) = -1$. In this case, $H_{f'} = H_f$ implies $Q = 0$, so $\sgn(d), \sgn(d') \in S$. But then $f' = u^{-3} f \circ (-J)$ implies $d' = du^{-3} = -d$, a contradiction.  So we must have $uM = I$, and hence $u^3 = 1$. It follows that $f' = u^{-3} f \circ (uM) = f \circ I = f$.   This completes the proof for the case $|P|< |R|$.

If $|P|=|R|$, then the lexicographical minimality of $f$ and $f'$ forces $f = f'$. 
\end{proof}

\section{Coefficient Bounds for Reduced Forms}\label{sec:coefbdds}

We now present bounds on the coefficients of a reduced binary cubic form with imaginary or unusual Hessian.  The motivation for these bounds is to establish that the set of reduced binary cubic forms up to any fixed discriminant degree is in fact finite, which yields a result analogous to part (2) of Theorem \ref{UnusSummary} for cubic forms.  This also ensures that the search procedure in Section \ref{sec:algnum} terminates because the search space is finite.  Moreover, we seek optimal bounds on the coefficients of a reduced binary cubic form so that the search procedure in Section \ref{sec:algnum} is as efficient as possible, as smaller upper bounds give rise to shorter loops.  

The following equality appears in Cremona \cite{Crem1}, and is easily verified by straightforward computation.

\begin{lemma}
Let $f = (a,b,c,d)$ be a binary cubic form with coefficients in
\polyringq{q} with imaginary or unusual Hessian $H_f=(P,Q,R)$.  Let $U = 2b^3 + 27a^2d - 9abc$.
Then \[ 4P^3 = U^2 + 27a^2D.\]
\label{syzygy}
\end{lemma}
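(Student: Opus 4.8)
The plan is to verify the identity $4P^3 = U^2 + 27a^2 D$ directly by treating both sides as polynomials in $\polyringq{q}[a,b,c,d]$ and comparing them. Since $P = b^2 - 3ac$, $U = 2b^3 + 27a^2 d - 9abc$, and $D = 18abcd + b^2c^2 - 4ac^3 - 4b^3d - 27a^2d^2$ are all explicit polynomials in $a,b,c,d$, the claimed equality is a polynomial identity that can in principle be checked by pure algebra; no properties of $\polyringq{q}$ beyond being a commutative ring of characteristic $\neq 2,3$ are needed (and in fact the identity holds over $\ints[a,b,c,d]$, so the characteristic hypothesis is irrelevant to the lemma itself).

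The key steps I would carry out are: first, expand $P^3 = (b^2 - 3ac)^3 = b^6 - 9ab^4c + 27a^2b^2c^2 - 27a^3c^3$ and multiply by $4$. Second, expand $U^2 = (2b^3 + 27a^2d - 9abc)^2 = 4b^6 + 729a^4d^2 + 81a^2b^2c^2 + 108a^2b^3d - 36ab^4c - 486a^3bcd$. Third, expand $27a^2 D = 486a^3bcd + 27a^2b^2c^2 - 108a^3c^3 - 108a^2b^3d - 729a^4d^2$. Then add the second and third expansions term by term: the $486a^3bcd$ terms cancel, the $108a^2b^3d$ terms cancel, the $729a^4d^2$ terms cancel, leaving $4b^6 - 36ab^4c + (81+27)a^2b^2c^2 - 108a^3c^3 = 4b^6 - 36ab^4c + 108a^2b^2c^2 - 108a^3c^3$, which is exactly $4P^3$. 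This confirms the identity.

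Since this is a routine (if slightly tedious) monomial bookkeeping exercise, there is no real mathematical obstacle; the only risk is an arithmetic slip in one of the multinomial expansions, so the sensible approach is to organize the computation by total degree and by monomial type, or simply to cite Cremona \cite{Crem1} as the excerpt already does and remark that the verification is a direct expansion. I would therefore keep the written proof very short, along the following lines.

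\begin{proof}
This is a polynomial identity in $a,b,c,d$ and follows by direct expansion. Writing $P = b^2 - 3ac$, one has
\[ 4P^3 = 4b^6 - 36ab^4c + 108a^2b^2c^2 - 108a^3c^3. \]
On the other hand, expanding $U = 2b^3 + 27a^2d - 9abc$ gives
\[ U^2 = 4b^6 - 36ab^4c + 81a^2b^2c^2 + 108a^2b^3d - 486a^3bcd + 729a^4d^2, \]
and, using $D = 18abcd + b^2c^2 - 4ac^3 - 4b^3d - 27a^2d^2$,
\[ 27a^2D = 486a^3bcd + 27a^2b^2c^2 - 108a^3c^3 - 108a^2b^3d - 729a^4d^2. \]
Adding the last two displays, the terms in $a^3bcd$, $a^2b^3d$ and $a^4d^2$ cancel, and one obtains
\[ U^2 + 27a^2D = 4b^6 - 36ab^4c + 108a^2b^2c^2 - 108a^3c^3 = 4P^3, \]
as claimed.
\end{proof}
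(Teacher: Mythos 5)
Your proof is correct and takes the same approach as the paper, which simply cites Cremona and notes the identity "is easily verified by straightforward computation"; you have carried out that direct expansion explicitly and the arithmetic checks out. No issues.
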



\begin{corol} \label{UPprop}
With the notation of Lemma \ref{syzygy}, we have $|U|^2 \leq |P|^3$ and $|a^2 D| \leq |P|^3$.
\label{ImagUnusRedONLY}
\end{corol}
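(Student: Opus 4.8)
The plan is to read both inequalities straight off the syzygy $4P^3 = U^2 + 27a^2D$ from Lemma \ref{syzygy}, using only the non-Archimedean nature of the degree valuation and the hypothesis that $-3D$ (hence $D$) is imaginary or unusual, so that $H_f$ is the corresponding kind of binary quadratic form. The key point is that the two terms $U^2$ and $27a^2D$ on the right-hand side cannot cancel in their leading terms, because $\deg(U^2)$ is even while $\deg(27a^2 D) = 2\deg(a) + \deg(D)$ has the parity of $\deg(D)$: when $D$ is imaginary these parities differ, so no cancellation is possible; when $D$ is unusual the degrees may agree, but then $\sgn(27a^2 D) = 27\sgn(a)^2\sgn(D)$ is a non-square (a non-square times a square) in $\finfldq{q}^*$, whereas $\sgn(U^2) = \sgn(U)^2$ is a square, so the leading coefficients cannot cancel either.

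Given no cancellation, $|4P^3| = |U^2 + 27a^2 D| = \max\{|U|^2, |a^2 D|\}$ (with $|27| = |4| = 1$ since $\charfld(\finfldq{q}) \geq 5$). First I would note $|P|^3 = |4P^3| \geq |U|^2$, which is the first claimed bound. Then I would note $|P|^3 = |4P^3| \geq |a^2 D| = |a^2 D|$, which is the second. That is essentially the whole argument; the only thing to be careful about is the edge case where one of $U$ or $a^2 D$ vanishes or where $P = 0$. Since $f$ is irreducible with nonzero discriminant $D \neq 0$ and $a \neq 0$, the term $27a^2 D$ is nonzero; and $P \neq 0$ because $H_f$ is imaginary or unusual (in particular $D(H_f) = -3D \neq 0$ forces $H_f$ irreducible, so its leading coefficient $P$ is nonzero). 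Thus $|4P^3| = \max\{|U|^2,|a^2D|\}$ holds unconditionally and both inequalities follow; if $U = 0$ the first inequality is trivial.

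The main (and really only) obstacle is making the ``no cancellation of leading terms'' claim airtight in the unusual case, i.e. being precise that $\sgn(-3D)$ a non-square forces $\sgn(D)$ a non-square (since $-3 = -3$ and one must track whether $-3$ is a square, but it is absorbed into the normalization $\sgn(-3D) \in \{1,h\}$; more directly, $\sgn(27 a^2 D) = 27 \sgn(a)^2 \sgn(D)$ and $-3D$ unusual means $\sgn(-3D)$ is a non-square, and $\sgn(27 a^2 D) = -9 \cdot \sgn(a)^2 \cdot \sgn(-3D)$, a square times $-9$ times a non-square). One should simply observe that $\sgn(a)^2$ is a nonzero square and it suffices that $\sgn(27 D)$ and $\sgn(1)$ differ multiplicatively by a non-square, which is exactly encoded in the unusual hypothesis on $-3D$ together with $\charfld(\finfldq{q}) \geq 5$. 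Everything else is a one-line valuation comparison.
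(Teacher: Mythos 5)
Your overall strategy is exactly the paper's: the paper's proof consists of the single observation that, because $-3D$ is imaginary or unusual, the two summands $U^2$ and $27a^2D$ in the syzygy $4P^3 = U^2 + 27a^2D$ cannot cancel in their leading terms, so neither can exceed $|4P^3| = |P|^3$. Your parity argument in the imaginary case and your edge-case bookkeeping ($a \neq 0$, $D \neq 0$, the trivial case $U = 0$) are fine.

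However, your justification of non-cancellation in the unusual case has a sign slip that, as written, does not go through. Cancellation of leading terms means the leading coefficients are \emph{additive inverses}, i.e.\ $\sgn(U)^2 = -27\,\sgn(a)^2\sgn(D)$, so comparing the quadratic characters of $\sgn(U^2)$ and of $\sgn(27a^2D)$ directly is not the right test: a square and a non-square can perfectly well sum to zero (take $1$ and $-1$ when $-1$ is a non-square). Moreover, your claim that $27\,\sgn(a)^2\sgn(D)$ is a non-square is itself not always true: writing $27\,\sgn(a)^2\sgn(D) = -9\,\sgn(a)^2\sgn(-3D)$, its quadratic character is that of $-\sgn(-3D)$, which is a \emph{square} precisely when $-1$ is a non-square. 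The correct one-line argument is: cancellation would force
\[
\sgn(U)^2 \;=\; -27\,\sgn(a)^2\sgn(D) \;=\; 9\,\sgn(a)^2\,\sgn(-3D),
\]
and the right-hand side is a nonzero square times the non-square $\sgn(-3D)$ (by the unusual hypothesis on $-3D$), hence a non-square, while the left-hand side is a square --- a contradiction. With that repair your proof coincides with the paper's.
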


\begin{proof}
Since $-3D$ is imaginary or unusual, there can be no cancellation between the two summands on the right-hand side of the identity of Lemma \ref{syzygy}.  Hence, neither term can exceed $|P|^3$ in absolute value.
\end{proof}

\begin{theorem}
Let $f = (a,b,c,d)$ be a reduced binary cubic
form over \polyringq{q} of discriminant $D$ with imaginary or unusual Hessian $(P,Q,R)$. Then $|a| \leq |D|^{1/4}$, $|b| \leq
|D|^{1/4}$, $|bc| \leq |D|^{1/2}$ and $|ad| \leq |D|^{1/2}$.
\label{acoefbound}
\end{theorem}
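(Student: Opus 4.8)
The plan is to combine the reducedness of the Hessian $H_f = (P,Q,R)$ — which, being reduced as an imaginary or unusual binary quadratic form, satisfies $|P| \le |R|$ and hence $|P| \le |D(H_f)|^{1/2} = |{-3D}|^{1/2} = |D|^{1/2}$ (using $\charfld(\finfldq{q}) \neq 3$ so that $|{-3}| = 1$) — with the syzygy-type estimates of Corollary \ref{UPprop}. The bound $|P| \le |D|^{1/2}$ is the key structural input; everything else is extracting consequences.

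First I would derive the bound on $|a|$. Corollary \ref{UPprop} gives $|a^2 D| \le |P|^3$, i.e.\ $|a|^2 |D| \le |P|^3 \le |D|^{3/2}$, whence $|a|^2 \le |D|^{1/2}$ and so $|a| \le |D|^{1/4}$. Next, for $|b|$: since $P = b^2 - 3ac$ and $-3D$ is imaginary or unusual, the reduced Hessian has $|P| \le |D|^{1/2}$, so $|b^2 - 3ac| \le |D|^{1/2}$. I would argue that $|b|^2 \le |P|$: if $|b|^2 > |ac|$ then the leading term of $b^2$ is not cancelled and $|P| = |b|^2$; if $|b|^2 \le |ac|$ then $|b|^2 \le |ac| \le |P|$ provided $|ac| \le |P|$, which holds because $|ac| \le \max(|b^2|, |P|) $ and in this subcase $|ac|$ dominates, forcing $|ac| = |P| \ge |b|^2$ unless there is cancellation in $b^2 - 3ac$ — but any such cancellation only lowers $|b^2 - 3ac|$, and in all cases one gets $|b|^2 \le |P| \le |D|^{1/2}$, hence $|b| \le |D|^{1/4}$. (I will phrase this more cleanly: from $|b^2 - 3ac| \le |D|^{1/2}$ and the non-Archimedean inequality, $|b|^2 \le \max(|D|^{1/2}, |ac|)$; combined with $|a| \le |D|^{1/4}$ and the analogous bound on $|c|$ obtained symmetrically from $R = c^2 - 3bd$ and $|R| \le |D|^{1/2}$ — wait, $|R|$ need not be $\le |D|^{1/2}$ — so I should instead use $P$ directly and the relation $|PR - Q^2/\,\text{stuff}|$; more straightforwardly, $|P| \ge |b|^2$ or $|P| \ge |ac| \ge$ nothing useful, so the honest route is the dichotomy above.)

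For $|bc|$ and $|ad|$: I would use $Q = bc - 9ad$ together with $U = 2b^3 + 27a^2d - 9abc = 2b^3 - 9a(bc - 9ad) - 81 a^2 d + 27 a^2 d$... let me instead note $U = 2b^3 + 27 a^2 d - 9abc$ and rewrite $U = 2b^3 - 9a(bc - 9ad) - 54 a^2 d$; hmm, cleaner: since $|U|^2 \le |P|^3 \le |D|^{3/2}$ gives $|U| \le |D|^{3/4}$, and $U \equiv 2b^3 \pmod{a}$-type manipulations relate $U$ to $abc$ and $a^2 d$. The cleanest approach: $|9abc| = |a||bc|$ and $|27 a^2 d| = |a|^2|ad|$; from $U = 2b^3 + 27a^2 d - 9abc$ one has $|a|\,|bc| = |9abc| \le \max(|U|, |2b^3|, |27 a^2 d|) \le \max(|D|^{3/4}, |D|^{3/4}, |D|^{1/2}|ad|)$, using $|b|^3 \le |D|^{3/4}$. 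Similarly $|a|^2 |ad| \le \max(|U|, |2b^3|, |9abc|) \le \max(|D|^{3/4}, |a||bc|)$. Solving this coupled pair of inequalities — together with $|a| \le |D|^{1/4}$ so that $|a|^{-1} \ge |D|^{-1/4}$ — yields $|bc| \le |D|^{1/2}$ and $|ad| \le |D|^{1/2}$. I expect the main obstacle to be bookkeeping the non-Archimedean case analysis in this last coupled system cleanly (separating the subcase where $|bc|$ and $|ad|$ could a priori both be large), and making sure the normalization $\sgn(-3D) \in \{1,h\}$ with $\charfld \neq 3$ is invoked correctly so that $|{-3D}| = |D|$ and Corollary \ref{UPprop} applies verbatim. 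I would finish by remarking that these four bounds, being in terms of $\deg(D)$ alone, immediately imply only finitely many reduced forms of each fixed discriminant, the cubic analogue of Theorem \ref{UnusSummary}(2).
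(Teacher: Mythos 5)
Your bound $|a| \le |D|^{1/4}$ is exactly the paper's argument, but both remaining steps have genuine gaps. For $|b|$: you cannot extract $|b|^2 \le |P|$ from $P = b^2 - 3ac$ alone. The non-Archimedean inequality only gives $|b|^2 \le \max\{|P|, |ac|\}$, and in the critical subcase $|b|^2 = |ac|$ the leading terms of $b^2$ and $3ac$ may cancel, making $|P|$ \emph{strictly smaller} than $|b|^2$ --- cancellation lowers $|P|$, which works against you, not for you (your parenthetical asserts the opposite). Since you have no independent upper bound on $|ac|$ at this stage (that is precisely what the bound on $|b|$ is later used to prove), the dichotomy does not close. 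The paper's route is different: it uses the identity $U = 2bP - 3aQ$, together with $|U| \le |P|^{3/2}$ from Corollary \ref{UPprop} and $|aQ| < |aP| \le |P|^{3/2}$ (valid since $|a| \le |P|^{1/2}$ and $|Q| < |P|$ by reducedness), to conclude $|bP| \le \max\{|U|, |aQ|\} \le |P|^{3/2}$, hence $|b| \le |P|^{1/2} \le |D|^{1/4}$.

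For $|bc|$ and $|ad|$, your coupled system from $U = 2b^3 + 27a^2d - 9abc$ does not solve. From $|a|\,|bc| \le \max\{|U|, |b|^3, |a|^2 |ad|\}$ you can only conclude $|bc| \le \max\{|D|^{3/4}/|a|,\ |a|\,|ad|\}$; when $a$ is a constant this is $|D|^{3/4}$, far weaker than $|D|^{1/2}$, and in the regime where both $|a|\,|bc|$ and $|a|^2|ad|$ exceed $|D|^{3/4}$ your two inequalities merely force $|a|\,|bc| = |a|^2|ad|$ with no upper bound at all. The paper closes this part with three ingredients you are missing: (i) $Q = bc - 9ad$ with $|Q| < |D|^{1/2}$ shows that either one of the bounds $|bc| \le |D|^{1/2}$, $|ad| \le |D|^{1/2}$ implies the other; (ii) once $|b|^2 \le |P|$ is known, $|ac| = |b^2 - P| \le |D|^{1/2}$, which disposes of the cases $|b| \le |a|$ (then $|bc| \le |ac|$) and $|d| \le |c|$ (then $|ad| \le |ac|$); (iii) in the remaining case $|a| < |b|$ and $|c| < |d|$, the further identity $bR + 3dP - cQ = 0$ forces $|dP| = |bR|$, whence $|bd| \le |R|$, then $|c|^2 = |3bd + R| \le |R|$, and finally $|bc|^2 \le |PR| = |D|$. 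Without a substitute for these identities the last two bounds do not follow from the syzygy alone.
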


\begin{proof}
We have $|Q| < |P| \leq |D|^{1/2}$. By Lemma \ref{syzygy}, $|a|^2 \leq |P|^3/|D| \leq |P| \leq |D|^{1/2}$.

Now an easy computation reveals $U = 2bP - 3aQ$. This, together with $|U| \leq |P|^{3/2}$ and $|aQ| < |aP| \leq |P|^{3/2}$, implies $|bP| \leq \max\{|U|, |aQ|\} \leq |P|^{3/2}$; whence

\begin{equation}
|b| \leq |P|^{1/2} \leq |D|^{1/4}.   \label{eqstar}
\end{equation}

For the remainder of the claim, it suffices to prove only one of $|ad| \leq |D|^{1/2}$ and $|bc| \leq |D|^{1/2}$, as any one of these inequalities, together with $Q = bc - 9ad$, implies the other.

Now by (\ref{eqstar}), $|ac| = |b^2 - P| \leq |D|^{1/2}$.  If $|b| \leq |a|$, then $|bc| \leq |ac| \leq |D|^{1/2}$.  If $|d| \leq |c|$, then $|ad| \leq |ac| \leq |D|^{1/2}$.

Finally, suppose that $|a| < |b|$ and $|c| < |d|$. It is easy to verify that $bR + 3dP - cQ = 0$. Thus, $|cQ| < |dP|$ implies $|dP| = |bR|$. It follows from (\ref{eqstar}) that $|bdP| = |b^2 R| \leq |PR|$, so $|bd| \leq |R|$. This in turn implies $|c|^2 = |3bd + R| \leq |R|$, and hence again from (\ref{eqstar}), $|bc|^2 \leq |PR| = |D|$, as claimed. 
\end{proof}

These bounds are indeed sharp.  An example of a reduced binary quadratic form over $\polyringq{5}$ where $|a|=|b|=|c|=|d| = |D|^{1/4}$ is $f=(a,b,c,d) = (2t+4,3t+4,3t+3,3t+1)$, where $D = t^4 + 4t^3 + t^2 + 3$.

We use the bounds of Theorem \ref{acoefbound} for our tabulation algorithm.  Specifically, we loop over all $a, b, c, d$ satisfying these bounds. An upper bound on $|abcd|$ determines how often the inner most loop is entered.  By Theorem \ref{acoefbound}, such a bound is given by $|D|$. Hence, $|D|$ is an upper bound on the number of forms of discriminant $D$ that the algorithm checks for membership in the Davenport-Heilbronn set $\newU$.

\section{The Davenport-Heilbronn Theorem}\label{sec:DH}

We now briefly discuss the Davenport-Heilbronn theorem for function fields.  The original Davenport-Heilbronn theorem \cite{HeDav2} states
that there exists a discriminant-preserving bijection from a certain
set $\newU$ of equivalence classes of integral binary cubic forms
of discriminant $D$ to the set of $\rats$-isomorphism classes of
cubic fields of the same discriminant $D$. Therefore, if one can
compute the unique reduced representative $f$ of any class of
forms in $\newU$ of discriminant $D$ with $|D| \leq X$, then this
leads to a list of minimal polynomials $f(x, 1)$ for all cubic
fields of discriminant $D$ with $|D| \leq X$.

The situation for cubic function fields is completely analogous.
We now state the function field version of the
Davenport-Heilbronn theorem, describe the Davenport-Heilbronn set $\newU$ for function
fields, and provide a fast algorithm for
testing membership in $\newU$ that is in fact more efficient than
its counterpart for integral forms.

For brevity, we let $[f]$ denote the equivalence class of any
primitive binary cubic form $f$ over \polyringq{q}. Fix any
irreducible polynomial $p \in \polyringq{q}$. Analogous to \cite{Bela1,Bela2,Coh2}, we define $\newV_p$
to be the set of all equivalence classes $[f]$ of binary cubic
forms such that $p^2 \nmid D(f)$. In other words, if $D(f)
= i^2 \Delta$ where $\Delta$ is square-free, then $f \in
\newV_p$ if and only if $p \nmid i$. Hence, $f \in \bigcap_p \newV_p$ if
and only if $D(f)$ is square-free.

Now let $\newU_p$ be the set of equivalence classes $[f]$ of
binary cubic forms over \polyringq{q} such that
\begin{itemize}
\item either $[f] \in \newV_p$, or
\item $f(x,y) \equiv \lambda (\delta x - \gamma y)^3  \pmod{p}$
for some $\lambda \in \polyringq{q}/(p)^*$, $\gamma,\delta \in
\polyringq{q}/(p)$, 
and
in addition, $f(\gamma,\delta) \not \equiv 0 \pmod{p^2}$.
\end{itemize}

For brevity, we summarize the condition $f(x,y) \equiv \lambda
(\delta x - \gamma y)^3 \pmod{p(t)}$ for some $\gamma,\delta \in
\polyringq{q}/(p)$ and $\lambda \in \polyringq{q}/(p)^*$ with the
notation $(f,p) = (1^3)$ as was done in \cite{HeDav1,HeDav2}.

Finally, we set 
$\newU = \bigcap_p \newU_p$.  The set \newU\ is the set under
consideration in the Davenport-Heilbronn theorem.
The version given below appears in \cite{Pieter2}.  A more
general version of this theorem for Dedekind domains appears in
Taniguchi \cite{Taniguchi}.

\begin{theorem} \label{DHThm}
Let $q$ be a prime power with $\gcd(q, 6) = 1$. Then there exists
a discriminant-preserving bijection between
$\ratsfuncq{q}$-isomorphism classes of cubic function fields and
classes of binary cubic forms over \polyringq{q}\ belonging to
$\newU$.  This bijection maps a class $[f] \in \newU$ to the triple of $\ratsfuncq{q}$-isomorphic cubic fields that have minimal polynomial $f(x,1)$.
\end{theorem}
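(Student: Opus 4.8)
The plan is to reduce the function-field statement to Taniguchi's Dedekind-domain version of the Davenport--Heilbronn correspondence and then verify that the base ring $\polyringq{q}$ satisfies the hypotheses needed for that version, together with the local conditions defining $\newU$. First I would set $\maxorder = \polyringq{q}$, which is a Dedekind domain (indeed a PID) with fraction field $\ratsfuncq{q}$; since $\gcd(q,6)=1$ the characteristic exceeds $3$, so the usual normalization of a binary cubic form (completing the cube, extracting the Hessian) is available and $2,3$ are units. I would then invoke Taniguchi \cite{Taniguchi} to obtain a discriminant-preserving bijection between $\maxorder$-equivalence classes of ``nondegenerate'' binary cubic forms satisfying the local conditions at each maximal ideal $(p)$ and isomorphism classes of cubic $\maxorder$-algebras (equivalently, after tensoring with $\ratsfuncq{q}$, étale cubic $\ratsfuncq{q}$-algebras), with the further refinement that the \emph{maximal} cubic algebras — i.e.\ those whose associated algebra is the integral closure, which corresponds exactly to the set $\newU = \bigcap_p \newU_p$ — match the cubic \emph{fields} (as opposed to split algebras $\ratsfuncq{q}\times K'$ or $\ratsfuncq{q}^3$). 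The key local translation is that the two bullet points defining $\newU_p$ are precisely the condition, in Taniguchi's setup, that the cubic ring is maximal at $p$: either $p^2 \nmid D(f)$ (automatically maximal at $p$), or the reduction mod $p$ is a triple line $\lambda(\delta x - \gamma y)^3$ but the ``nondivisibility'' condition $f(\gamma,\delta)\not\equiv 0 \pmod{p^2}$ holds, which is exactly Davenport--Heilbronn's criterion $(f,p)=(1^3)$ for maximality at a ramified prime.

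Next I would pin down the ``triple of $\ratsfuncq{q}$-isomorphic cubic fields that have minimal polynomial $f(x,1)$'' phrasing. Given a class $[f]\in\newU$ with $f=(a,b,c,d)$, since $f$ is primitive and irreducible over $\ratsfuncq{q}$ with nonzero discriminant, $f(x,1)$ is (up to the leading coefficient $a$) the minimal polynomial of some $\theta$ with $\ratsfuncq{q}(\theta)$ a separable cubic field $K$; the three roots of $f(x,1)$ in an algebraic closure generate three conjugate — hence $\ratsfuncq{q}$-isomorphic — copies of $K$, which accounts for the ``triple.'' I would check that $\operatorname{disc}(K) = D(f)$ under the normalization $\sgn(D)\in\{1,h\}$ fixed in Section \ref{BQFUnus}: Proposition \ref{FstMajCub}(1)--(2) shows equivalent forms have discriminants differing by a sixth power of a unit, i.e.\ by a square in $\finfldq{q}^*$, so the normalization makes the discriminant a well-defined invariant of the class, matching the field discriminant on the nose. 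Conversely, given a cubic field $K/\ratsfuncq{q}$, I would take an integral generator of a suitable $\polyringq{q}$-order (e.g.\ the maximal order $\algints$, which is free of rank $3$ over $\polyringq{q}$ since $\polyringq{q}$ is a PID), use the index form / the classical association of a binary cubic form to a cubic ring via $\wedge^2$ of the trace-zero part, and verify it lands in $\newU$ because $\algints$ is maximal everywhere.

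I expect the main obstacle to be purely expository rather than mathematical: making the passage between ``classes of binary cubic forms over $\polyringq{q}$'' as defined in this paper (with the $\genlink{\polyringq{q}}$-action, \emph{without} the extra unit scaling, as emphasized just before Proposition \ref{FstMajCub}) and the equivalence relation used in Taniguchi's general theorem, which typically uses $\genlink{\maxorder}$ acting with a twist by $\det$, or a $\operatorname{GL}_2\times\operatorname{GL}_1$ action. I would need to check that removing the unit twist does not collapse or split any classes in $\newU$ — intuitively it does not, because over $\polyringq{q}$ the relevant unit group $\finfldq{q}^*$ is absorbed into the $\operatorname{GL}_2$-action (scalar matrices), but I would state this reconciliation carefully as a lemma. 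The remaining steps — verifying that the local conditions are stable under equivalence (which follows from Proposition \ref{FstMajCub}(3)--(4), primitivity and irreducibility being preserved, together with the fact that $D(f)$ changes only by a unit square) and that the correspondence is discriminant-preserving under the fixed sign normalization — are routine. So the bulk of the write-up is: (i) cite Taniguchi for the bijection over Dedekind domains; (ii) identify $\newU_p$ with the maximality-at-$p$ condition; (iii) reconcile the two notions of form-equivalence; (iv) unwind ``maximal cubic algebra that is a field'' into ``cubic function field,'' and ``the form $f$'' into ``the triple of conjugate fields with minimal polynomial $f(x,1)$.''
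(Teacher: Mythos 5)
The paper does not actually prove Theorem \ref{DHThm} in the text: it cites the first author's thesis \cite{Pieter2} and Taniguchi's Dedekind-domain generalization \cite{Taniguchi}, which is precisely the reduction you outline. Your sketch --- invoking Taniguchi over the PID $\polyringq{q}$ (where $2,3$ are units), identifying the two bullet conditions defining $\newU_p$ with maximality at $p$, and reconciling the form-equivalence conventions --- is the intended argument and is correct.
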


In order to convert Theorem \ref{DHThm} into an algorithm, we
require a fast method for testing membership in the set $\newU$.
This is aided by the following efficiently testable conditions:

\begin{propn} \label{Uprops}
Let $f = (a,b,c,d)$ be a binary cubic form over \polyringq{q} with
Hessian $H_f = (P,Q,R)$. Let $p \in \polyringq{q}$ be irreducible.
Then the following hold:
\begin{enumerate}
\item $(f,p) = (1^3)$ if and only if $p \mid \gcd (P,Q,R)$.
\item If $(f,p) = (1^3)$ then $f \in \newU_{p}$ if and only if $p^3 \nmid D(f)$.
\end{enumerate}
\end{propn}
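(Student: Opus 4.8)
The plan is to prove the two statements by exploiting the behavior of the Hessian modulo $p$, using the transformation law $H_{f \circ M} = (\det M)^2 H_f \circ M$ from Proposition \ref{Hessprops2}(1), which lets me reduce to computing the Hessian of a particularly simple representative of the class.

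\textbf{Part (1).} First I would observe that the condition $(f,p) = (1^3)$ means $f \equiv \lambda(\delta x - \gamma y)^3 \pmod p$ with $\lambda \in (\polyringq{q}/(p))^*$ and $(\gamma,\delta) \not\equiv (0,0)$. Choose $\mu,\nu \in \polyringq{q}/(p)$ so that $M = \left(\begin{smallmatrix} \delta & \mu \\ -\gamma & \nu \end{smallmatrix}\right)$ is invertible modulo $p$; then $f \circ M \equiv \lambda x^3 \pmod p$, i.e. $f \circ M \equiv (\lambda', 0, 0, 0) \pmod p$ for some unit $\lambda'$. A direct computation of the Hessian of $(\lambda',0,0,0)$ gives $P = Q = R = 0$, so $p \mid \gcd(P_{f \circ M}, Q_{f \circ M}, R_{f \circ M})$. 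By Proposition \ref{Hessprops2}(1), $H_f = (\det M)^{-2} H_{f \circ M} \circ M^{-1}$ — or more precisely, reading the relation the other way, the coefficients of $H_{f \circ M}$ are $\polyringq{q}$-linear combinations of those of $H_f$ with coefficients that are polynomials in the entries of $M$, and vice versa using $M^{-1}$ (note $\det M$ is a unit mod $p$, so $M^{-1}$ has entries in $\polyringq{q}/(p)$). Hence $p \mid \gcd(P,Q,R)$ if and only if $p \mid \gcd(P_{f \circ M}, Q_{f \circ M}, R_{f \circ M})$, and the latter holds since $f \circ M \equiv \lambda' x^3$. For the converse, if $p \mid \gcd(P,Q,R)$, I would run the same argument in reverse: work modulo $p$, and show that a binary cubic form over the field $\polyringq{q}/(p)$ whose Hessian vanishes identically must be a constant times a perfect cube. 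This is the classical fact that the Hessian of a binary cubic detects a triple root; since $\charfld \neq 2,3$, vanishing of $H_f$ forces the cubic $f(x,1)$ (or $f(1,y)$ if $a \equiv 0$) to have a triple root over the algebraic closure, and a short discriminant/gcd argument shows the root lies in $\polyringq{q}/(p)$ itself so that $f \equiv \lambda(\delta x - \gamma y)^3$.

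\textbf{Part (2).} Assuming $(f,p) = (1^3)$, pick $M \in \genlink{\polyringq{q}}$ (liftable, with $\det M$ a unit) so that $g := f \circ M \equiv \lambda x^3 \pmod p$; write $g = (a_0, b_0, c_0, d_0)$ with $p \mid b_0, c_0, d_0$ and $p \nmid a_0$. In the notation of the Davenport–Heilbronn set, $g(x,y) \equiv \lambda(\delta' x - \gamma' y)^3 \pmod p$ with $(\gamma',\delta') = (0,1)$, so $g(\gamma',\delta') = g(0,1) = d_0$, and by definition $[g] = [f] \in \newU_p$ if and only if $d_0 \not\equiv 0 \pmod{p^2}$. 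So it remains to show $p^2 \nmid d_0 \iff p^3 \nmid D(g)$ (and $D(g) = (\det M)^6 D(f)$ differs from $D(f)$ only by a unit power, so $p^3 \nmid D(g) \iff p^3 \nmid D(f)$). Now plug $p \mid b_0, c_0, d_0$, $p \nmid a_0$ into $D = 18abcd + b^2c^2 - 4ac^3 - 4b^3 d - 27a^2 d^2$: every term except $-27 a_0^2 d_0^2$ is divisible by $p^3$ (indeed $18a_0b_0c_0d_0$ by $p^3$, $b_0^2c_0^2$ by $p^4$, $4a_0c_0^3$ by $p^3$, $4b_0^3 d_0$ by $p^4$), so $D(g) \equiv -27 a_0^2 d_0^2 \pmod{p^3}$. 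Since $p \nmid a_0$ and $\charfld \neq 3$, we get $p^3 \mid D(g) \iff p^3 \mid d_0^2 \iff p^2 \mid d_0$, which is exactly the negation of the $\newU_p$ condition. This finishes part (2).

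\textbf{Main obstacle.} The routine divisibility bookkeeping in part (2) is mechanical; the delicate point is the converse direction of part (1) — showing that a binary cubic over $\polyringq{q}/(p)$ with identically vanishing Hessian is a unit times a cube of a linear form \emph{with coefficients in the field $\polyringq{q}/(p)$}, not merely over its algebraic closure. The geometric statement (vanishing Hessian $\Leftrightarrow$ triple root) is standard, but descent of the root to the base field needs a small argument: one checks that $P = b^2 - 3ac$, $Q = bc-9ad$, $R = c^2-3bd$ all vanish forces, e.g. if $a \neq 0$, that $b^2 = 3ac$ and $(\text{from } Q,R)$ $c = b^2/(3a)$, $3ad = b^3/(9a)$, so $f(x,1) = a(x + b/(3a))^3$ with $b/(3a) \in \polyringq{q}/(p)$; the case $a \equiv 0$ is handled symmetrically via $d$, and the subcase $a \equiv d \equiv 0$ is immediate. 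I would present this descent computation carefully since it is the one place where ``triple root over $\overline{\mathbb{F}}$'' does not instantly give the form demanded by the definition of $(f,p)=(1^3)$.
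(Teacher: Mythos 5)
Your proposal is correct. The paper itself gives no proof of Proposition \ref{Uprops} --- it is stated as the function-field analogue of the corresponding facts for integral cubic forms (Belabas \cite{Bela1}, Cohen \cite{Coh2}, Ch.~8) --- so there is nothing in the text to compare against; your argument is essentially the standard one for that integral case, transported verbatim. Both halves check out: the forward direction of (1) via the vanishing of the Hessian of $\lambda(\delta x-\gamma y)^3$ and the transformation law of Proposition \ref{Hessprops2}(1), the converse via the explicit descent $c=b^2/(3a)$, $d=b^3/(27a^2)$ giving $f(x,1)=a(x+b/(3a))^3$ over the residue field (your flagged ``main obstacle'' is indeed the only point needing care, and your computation handles it), and the valuation bookkeeping $D(g)\equiv -27a_0^2d_0^2 \pmod{p^3}$ in (2). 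Two small points worth a sentence each in a polished write-up: first, the existence of a matrix $M\in\genlink{\polyringq{q}}$ with $\det M\in\finfldq{q}^*$ reducing $\delta x-\gamma y$ to a unit times $x$ modulo $p$ requires choosing coprime lifts of $\gamma,\delta$ and applying B\'ezout, and one should note (or cite) that membership in $\newU_p$ is invariant under this change of representative; second, in part (2) your congruence shows $p^2\mid D(g)$ automatically once $p\mid d_0$, so the first alternative $[g]\in\newV_p$ in the definition of $\newU_p$ is vacuous and the equivalence reduces cleanly to the condition $f(\gamma,\delta)\not\equiv 0\pmod{p^2}$, as you use.
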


In addition, classes in $\newU$ contain only irreducible forms;
this result can be found for integral cubic forms in Chapter 8 of \cite{Coh2},
and is completely analogous for forms over \polyringq{q}. 

\begin{theorem} \label{imaginuirr}
Any binary cubic form whose equivalence class belongs to $\newU$
is irreducible.
\end{theorem}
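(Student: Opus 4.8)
The plan is to argue by contradiction: assume $[f]\in\newU$ but that $f$ factors non-trivially over $\ratsfuncq{q}$, and then exhibit an irreducible $p\in\polyringq{q}$ with $[f]\notin\newU_p$, contradicting $[f]\in\newU=\bigcap_p\newU_p$. (A conceptual shortcut is to invoke Theorem \ref{DHThm} directly: a class in $\newU$ has as minimal polynomial the genuine irreducible cubic $f(x,1)$ of a cubic function field, so $f$ must be irreducible. Following Cohen \cite{Coh2}, however, the argument below avoids relying on the full bijection, and will be needed anyway to test membership in $\newU$ during tabulation.)

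First I would reduce to a normal form. Since $f$ is primitive and reducible, Gauss's lemma gives a factorisation $f=\ell\,g$ with $\ell=\mu x+\nu y$ a primitive linear form (i.e.\ $\gcd(\mu,\nu)=1$) and $g$ a primitive binary quadratic form over $\polyringq{q}$. Because $SL_2(\polyringq{q})\subseteq\genlink{\polyringq{q}}$ acts transitively on primitive column vectors over the principal ideal domain $\polyringq{q}$, there is a matrix $M$ with $\ell\circ M=y$; by Proposition \ref{FstMajCub}, $f\circ M$ represents the same class (hence still lies in $\newU$) and has the same discriminant up to a unit, so we may assume $f=(0,b,c,d)$ with $\gcd(b,c,d)=1$, $b\neq0$ (otherwise $y^2\mid f$ and $f$ has a repeated root), and $c^2-4bd\neq0$. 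A direct computation then yields $D(f)=b^2(c^2-4bd)$ and $H_f=(P,Q,R)=(b^2,\,bc,\,c^2-3bd)$.

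The main case is when $b$ is not a unit. Choose an irreducible $p\mid b$. Then $p^2\mid b^2\mid D(f)$, so $[f]\notin\newV_p$, and hence $[f]\in\newU_p$ forces $(f,p)=(1^3)$. By Proposition \ref{Uprops}(1) this means $p\mid\gcd(P,Q,R)$; since $p\mid b$ already gives $p\mid P$ and $p\mid Q$, the remaining requirement $p\mid R=c^2-3bd$ forces $p\mid c$, and then primitivity of $(b,c,d)$ forces $p\nmid d$. Consequently $v_p(c^2-4bd)\geq1$ while $v_p(b^2)\geq2$, so $p^3\mid D(f)$; but $(f,p)=(1^3)$ together with Proposition \ref{Uprops}(2) and $[f]\in\newU_p$ gives $p^3\nmid D(f)$, a contradiction. (Equivalently and more directly: $f\equiv d\,y^3\pmod p$, so $(f,p)=(1^3)$ holds with $\gamma=1,\ \delta=0$, and then $f(1,0)=0\equiv0\pmod{p^2}$ violates the extra non-vanishing condition in the definition of $\newU_p$.)

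The remaining case — which I expect to be the main obstacle — is when $b$, equivalently (up to units) the resultant of the two factors $\ell$ and $g$, is a unit; this is precisely the situation in which $\ell$ is a unit multiple of a coordinate, so that $f(x,1)$ has degree $2$ rather than $3$. Here $P=b^2$ is a unit, hence $(f,p)\neq(1^3)$ for every $p$ and the dichotomy used above no longer bites, so these forms must be ruled out by a separate argument. The plan is to verify that no member of such a class can meet the local maximality conditions underlying $\newU$ — concretely, by solving a short linear congruence for the entries of a matrix $M\in\genlink{\polyringq{q}}$, subject to $\det M\in\finfldq{q}^*$, so that $f\circ M=(a',b',c',d')$ has $p\mid a'$ and $p^2\mid b'$ for a suitable prime $p\mid c^2-4bd$ — or, more cheaply, to observe that such forms are outside the scope of Theorem \ref{DHThm} because $f(x,1)$ is not a genuine cubic and $[f]$ therefore corresponds to no cubic function field. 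Once this degenerate case is disposed of, every case has produced a contradiction and the proof is complete.
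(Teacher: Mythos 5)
First, a point of reference: the paper does not actually prove this theorem --- it only cites Chapter 8 of Cohen for the integral case and asserts that the function field case is analogous --- so there is no in-paper argument to compare yours against. Your reduction to $f=(0,b,c,d)$ via transitivity of $SL_2(\polyringq{q})$ on primitive vectors is fine, and your first case (where $b$, i.e.\ $\mathrm{Res}(\ell,g)$ up to units, is a non-unit) is correct and complete; both the Hessian route via Proposition \ref{Uprops} and the direct route via $f(\gamma,\delta)=a\gamma^3=0\equiv 0\pmod{p^2}$ go through.

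The second case is a genuine gap, and neither of your proposed repairs can close it, because in that case $[f]$ really does satisfy every local condition in the displayed definition of $\newU_p$. Concretely, $f=y(x^2+xy+ty^2)=(0,1,1,t)$ has squarefree discriminant $D=b^2(c^2-4bd)=1-4t$, so $[f]\in\newV_p\subseteq\newU_p$ for every irreducible $p$, yet $f$ is reducible. Under the ring-theoretic Davenport--Heilbronn/Delone--Faddeev correspondence such classes are the maximal orders in etale algebras $\ratsfuncq{q}\times(\text{quadratic field})$, which are maximal at every prime, so no choice of $p$ and $M\in\genlink{\polyringq{q}}$ can exhibit a failure of local maximality; your congruence-solving plan must fail. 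Your fallback (``such forms are outside the scope of Theorem \ref{DHThm}'') is not a proof but a restatement of the convention that makes the theorem true, namely the paper's standing assumption that all forms are irreducible, so that $\newU$ is \emph{by definition} a set of classes of irreducible forms. (Your description of this case is also off: after your normalization $\ell=y$ is always a coordinate and $f(x,1)$ always has degree at most $2$; the dichotomy is the intrinsic condition $\mathrm{Res}(\ell,g)\in\finfldq{q}^*$ versus not, not whether $\ell$ is a coordinate.) What actually keeps these classes out of the tabulation is the reduction theory rather than the local conditions: such a class can be brought to the shape $y(x^2+ey^2)$ with $e\notin\finfldq{q}$, its reduced Hessian is $(1,0,-3e)$ with stabilizer essentially $\{\pm I,\pm J\}$, so every representative with reduced Hessian has $a=0$ and the normalization $\sgn(a)\in S$ can never be met --- the class has no reduced representative at all. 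That, and not an argument from the $\newU_p$ conditions, is the missing ingredient if one wants to justify omitting the irreducibility test in the algorithm.
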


By Theorem \ref{DHThm}, if $[f] \in \newU$, then $f(x, 1)$
is the minimal polynomial of a cubic function field over
$\ratsfuncq{q}$. This useful fact eliminates the necessity for a
potentially costly irreducibility test when testing membership
in~$\newU$.

Using Proposition \ref{Uprops}, we can now formulate an algorithm
for testing membership in $\newU$. This algorithm will be used in our tabulation routines for cubic function fields.  The algorithm is slightly different from the one in \cite{ANTSPie}; Hessian and discriminant values are passed into the routine, rather than computed inside Algorithm \ref{cubformtestversion2}.

\begin{algorithm}
\caption{Test for membership in $\newU$}
\label{cubformtestversion2}
\begin{algorithmic}[1]
\REQUIRE A binary cubic form $f=(a,b,c,d)$ with coefficients in \polyringq{q}, its Hessian $(P,Q,R)$ and its discriminant $D$.
\ENSURE \texttt{true} or \texttt{false} according to
whether or not $[f]$ lies in the set \newU.\\
\STATE Compute $\ell_H := \gcd(P,Q,R)$.
\IF{$\ell_H$ is not square-free}  
\RETURN \texttt{false};
\ENDIF
\STATE Compute $s:=-3D/(\ell_H)^2$;
\IF{$\gcd(s,\ell_H) \neq 1$}  
\RETURN \texttt{false};
\ENDIF
\IF{$s$ is square-free}
\RETURN  \texttt{true};
\ELSE
\RETURN \texttt{false};
\ENDIF
\end{algorithmic}
\end{algorithm}
We note here that step 2 of Algorithm \ref{cubformtestversion2} uses part (2) of Proposition \ref{Uprops}.  In step 5, note that if $p \mid \ell_H$ and $p \mid s$, then $p^3 \mid D$ and using part (1) of Proposition \ref{Uprops} yields $[f] \notin \newU_p$.  If $f$ passes steps 1-6, then $s$ is not square-free if and only if there exists an irreducible polynomial $p \in \polyringq{q}$ with $p^2 \mid s$ and hence $p \nmid \ell_H$.  Using part (2) of Proposition \ref{Uprops} again, this rules out $(f,p)=(1^3)$. On the other hand, we
also have $p^2 \mid D(f)$, so $f \notin \newV_p$, and hence $f \notin \newU_p$.  Thus, if $s$ is not  square-free then $[f] \not\in \newU_p$ for some $p$, or equivalently, $[f] \not\in \newU$.  Conversely, if $s$ is square-free, then the primes $p$ dividing $s$, and hence dividing $D$, occur in $D$ to the first power.  Thus $f \in \newV_p$ for all such $p$, proving the validity of step 7.

Note that steps 2 and 7 of Algorithm \ref{cubformtestversion2} require
tests for whether a polynomial $F \in \polyringq{q}$ is
square-free. This can be accomplished very efficiently with a
simple gcd computation, namely by checking whether $\gcd(F, F') =
1$, where $F'$ denotes the formal derivative of $F$ with respect
to $t$. This is in contrast to the integral case, where square-free
testing of integers is generally difficult; in fact, square-free
factorization of integers is just as difficult as complete
factorization. Hence, the membership test for $\newU$ is more
efficient than its counterpart for integral forms.  One may be tempted to try a more na\"{i}ve approach, namely factoring the square part $i$ out of the discriminant $D$ and then testing only the resulting $p$.  Even though factorization of polynomials over finite fields is much easier than factoring integers, this would add an unnecessary $\log$ factor to our theoretical run times, so we did not attempt this approach.

\section{Algorithms and Numerical Results}\label{sec:algnum}

We now describe the tabulation algorithm for cubic function
fields corresponding to reduced binary cubic
forms over \polyringq{q} with imaginary or unusual Hessian; that is, cubic extensions of
\ratsfuncq{q} of discriminant $D$ where 
$\deg(D)$ is odd, or $\deg(D)$ is even and $\sgn(-3D)$ is a non-square in
$\finfldq{q}^*$.

The idea of the algorithm is as follows. Input a prime power $q$
coprime to 6, a degree bound $B \in \mathbb{N}$, a primitive root $h$ of $\finfldq{q}^*$ and the set $S$. The algorithm outputs minimal polynomials for all \ratsfuncq{q}-isomorphism
classes of cubic extensions of \ratsfuncq{q} of discriminant $D$
such that $-3D$ is imaginary or unusual and $|D| \leq q^B$.  The
algorithm searches through all coefficient 4-tuples $(a,b,c,d)$
that satisfy the degree bounds of Theorem \ref{acoefbound} with
$|D|$ replaced by $q^B$ such that the form $f = (a,b,c,d)$ satisfies
the following conditions:

\begin{enumerate}
\item $f$ is reduced;
\item $f$ has imaginary (resp.\ unusual) Hessian;
\item $f$ belongs to an equivalence class in $\newU$;
\item $f$ has a discriminant $D$, where $\deg(D) \leq B$.
\end{enumerate}
If $f$ passes all these tests, the algorithms outputs $f(x,1)$
which by Theorem \ref{DHThm} is the minimal polynomial of a triple
of \ratsfuncq{q}-isomorphic cubic function fields of discriminant
$D$.  

The test of whether or not $f$ is reduced in the case where $H_f$ is unusual is more
involved than in the imaginary case.  Recall from Theorem \ref{quadeqthm} and Corollary \ref{qplus1reps} that if $H_f = (P,Q,R)$ is the Hessian of $f$ and both $|P|= |R|=\sqrt{|D|}$ and $\sgn(P)=1$ are satisfied, then this test requires the computation of $q+1$ partially reduced binary quadratic forms equivalent to $H_f$, along with the determination of whether $H_f$ is the smallest lexicographically amongst the $q+1$ quadratic forms.  Furthermore, by Theorem \ref{distinctalphas}, $H_f$ may have non-trivial automorphisms, potentially resulting in more than one equivalent binary cubic form with the same Hessian.  Fortunately, the proportion of forms for which these extra checks need to be performed is small by Lemma \ref{l:forms2} below, so this does not significantly affect the run time of the tabulation algorithm. However, the fact that the bounds in Theorem \ref{acoefbound} can be attained when $|P| = |R|$, whereas they can not be exactly met when $|P| < |R|$, make the algorithm for unusual Hessians slower by a factor of $q$, as seen in Corollary \ref{algislinanalysisunus}.

Checking whether a binary cubic form has a partially reduced Hessian amounts to checking the conditions specified in Definition \ref{cubimagreduced2}.  Checking whether a given cubic form lies in $\newU$ involves running Algorithm \ref{cubformtestversion2}.   A basic version of Algorithm \ref{tabimagbetter} loops over each of the coefficients $a,b,c,d$ up to the bounds given in Theorem \ref{acoefbound}.  We omit the description here, instead giving an improved version in Algorithm \ref{tabimagbetter}.  
First, we modify the \emph{for} loop on $a$ so that the condition $\sgn(a) \in S$ is checked first.  
The other improvement involves the \emph{for} loop on $d$.  Instead of using the \emph{for} loop on $d$ as one might do in a basic implementation, we determine which degree values of $d$ lead to an odd (resp.\ even) degree discriminant.  This entails computing the quantities $m_1$ through $m_5$ and the maximum of these values $m$.  The values of $m_1$ through $m_5$ are simply the degree values of each term in the formula for the discriminant of a binary cubic form.  If the maximum value $m$ of these terms is taken on by a unique term amongst the $m_i$ and $m$ is not of the appropriate parity, the next degree value for $d$ is considered instead of proceeding further with computing the Hessian and discriminant.  This allows us to avoid extra computations for discriminants that do not have odd (resp.\ even) degree altogether.

For admissible values, we compute the Hessians and the discriminant as before, but we can compute the quantities $P$ and $t_1:= bc ,t_2:= c^2$ before any information about $d$ is known.   We also note that the seemingly redundant check of the conditions $\deg(D) \leq B$ and $\deg(D)$ odd (resp.\ even) near the end of the algorithm may be necessary in the event of cancellation of terms when $m$ is not taken on by a unique term among the $m_i$ (since it may be possible that the maximum $m$ of the degrees on the terms of the cubic discriminant satisfies $m > B$ but $\deg(D) \leq B$).

\begin{algorithm}
\caption{Tabulation of Imaginary (resp.\ Unusual) Cubic Function Fields (modified version)}
\label{tabimagbetter}
\begin{algorithmic}[1]
\REQUIRE A prime power $q$ not divisible by $2$ or $3$, a primitive root $h$ of $\finfldq{q}$, the set $S=\{1,h,h^2,\ldots h^{(q-3)/2} \}$, and a positive integer $B$.
\ENSURE Minimal polynomials for all \ratsfuncq{q}-isomorphism classes of cubic function fields of discriminant $D$ with $\deg(D)$ odd, $\sgn(-3D) \in \{1,h\}$ (resp.\ $\deg(D)$ even and $\sgn(-3D)=h$), and $\deg(D) \leq B$.\\
 
\FOR{$\deg(a) \leq B/4$ \textbf{AND} $\sgn(a) \in S$}
\FOR{$\deg(b) \leq B/4$}
\FOR{$\deg(c) \leq B/(2\deg(b))$}
\STATE $m_1: = 2 (\deg(b) + \deg(c))$;
\STATE $m_2:= \deg(a) + 3 \deg(b)$
\FOR{$i = 0$ to $B/2 - \deg(a)$}
\STATE $m_3: = \deg(a) + \deg(b) + \deg(c) + i$;
\STATE $m_4:= 3 \deg(b) + i$
\STATE $m_5:= 2(\deg(a) + i)$
\STATE $m:= \max \{m_1,m_2,m_3,m_4,m_5\}$
\IF{($m$ is not taken on by a unique term among the $m_i$) \textbf{OR} ($m$ is taken on by a unique term \textbf{AND} $m$ is odd (resp.\ even) \textbf{AND} $q^m \leq q^B$)}
\STATE Compute $P:= b^2 - 3ac$;
\STATE Compute $t_1:= bc$;
\STATE Compute $t_2:= c^2$
\FOR{$\deg(d) = i$}
\STATE Set $f:= (a,b,c,d)$;
\STATE Compute $Q := t_1- 9ad$;
\STATE Compute $R := t_2 - 3bd$;
\STATE Compute $-3D = -3D(f) = Q^2 - 4PR$;
\IF{$\deg(D)$ is odd and  $\sgn(-3D) \in \{1,h\}$ (resp.\ $\deg(D)$ even and $\sgn(-3D)=h$)  \textbf{AND}
$\deg(D) \leq B$
\textbf{AND}  $f$ is reduced  \textbf{AND} $[f] \in \newU$}  
\STATE  Output $f(x,1)$;
\ENDIF 
\ENDFOR
\ENDIF  
\ENDFOR
\ENDFOR
\ENDFOR
\ENDFOR
\end{algorithmic}
\end{algorithm}

Some extra routines are needed for the unusual Hessian case and are described here.  First, a routine called \emph{ConicSolver} is used to solve the equation $\alpha^2 - (4/h) \beta^2 =1$ for $\alpha,\beta \in \finfldq{q}$ via brute force.  Each solution pair is stored in an array.  If the Hessian $H_f = (P,Q,R)$ of a binary cubic form $f$ satisfies $|P|=|R|$ and $\sgn(P)=1$, then the solutions $(\alpha,\beta)$ are used as in Corollary \ref{qplus1reps}.  Each pair $(\alpha,\beta)$ determines a matrix $M_{\alpha,\beta} = \left( \begin{smallmatrix} \alpha & \beta \\ 4u \beta/h & u \alpha \end{smallmatrix} \right)$, where $u=\pm1$ is chosen so that either $\sgn(Q_i) \in S$ when $Q_i \neq 0$ for any of the $q+1$ Hessians $(P_i,Q_i,R_i)$ of the corresponding partially reduced $q+1$ cubic forms, or $\sgn(d_i) \in S$ when $Q_i=0$; here, $f_i = (a_i,b_i,c_i,d_i) = f \circ M$ where $(P_i, Q_i, R_i) = H_{f_i} = H_f \circ M$, and $M$ is the appropriate matrix $M_{\alpha, \beta}$.  Each of these matrices is applied to each Hessian $H_f$ under consideration, with the value $u$ determined as described above. 

If the Hessian $H_f$ is the smallest in terms of lexicographical order amongst the $q+1$ Hessians computed, then the corresponding binary cubic form $f$ is output if it lies in $\newU$.  This task is accomplished via a routine called \emph{IsSmallestQuad}, which returns \texttt{true} if $H_f$ is the smallest lexicographically amongst the $q+1$ forms equivalent to $H_f$, and returns \texttt{false} otherwise.  In the event that $H_f$ is the smallest and $H_f$ has non-trivial automorphisms as given in Theorem \ref{distinctalphas}, then $f$ is tested to see if it is the smallest in terms of lexicographical ordering amongst itself and the two binary cubic forms equivalent to $f$ via a non-trivial automorphism.  This task is accomplished via a routine called \emph{IsDistCub}, which returns \texttt{true} if $f$ is the smallest lexicographically amongst these binary cubic forms, and returns \texttt{false} otherwise.  If the routine \emph{IsDistCub} returns \texttt{true} and the binary cubic form $f$ lies in $\newU$, the minimal polynomial $f(x,1)$ is output.

If the (unusual) Hessian $H_f$ of a binary cubic form $f$ satisfies $|P| < |R|$, then the algorithm is much simpler, since the partially reduced binary cubic form is in fact reduced.  That is, we simply test the binary cubic form $f$ to see if it lies in \newU, just like in the case of imaginary Hessians.  If it does, the minimal polynomial $f(x,1)$ is output.

Tables \ref{tabtableOdd} and \ref{tabtableEven} present the results of our computations using Algorithm \ref{tabimagbetter} for cubic function fields with imaginary and unusual Hessian, respectively, for
various values of $B$, 
given in column 2 of each table.  The third column gives the number of fields with discriminant $D$ satisfying $\deg(D) \leq B$ and the fourth column denotes the number of reduced cubic forms whose Hessians have non-trivial automorphisms.  The last column gives the various timings for each degree bound.  The results in these tables extend and correct those in \cite{ANTSPie}.    The lists of cubic function fields were computed on a multi-processor machine with four 2.8 GHz Pentium 4 processors running Linux with 4 GB of RAM, with a 180 MB cache using the C++ programming language coupled
with the number theory library NTL \cite{Shoup1} to implement our algorithm.  The results for unusual discriminants in this table are new and did not appear in \cite{ANTSPie}. 

\begin{center}
\begin{longtable}{|c|c|c|c|c|} 
\caption{Number of cubic function fields over $\ratsfuncq{q}$ of discriminant $D$ with $\deg(D) \leq B$, $\deg(D)$ odd}\label{tabtableOdd} \\

\hline \multicolumn{1}{|c|}{\,\,$q$\,\,} & \multicolumn{1}{|c|}{Degree bd. $B$} & \multicolumn{1}{|c|}{\# of fields} & \multicolumn{1}{c|}{\# non-triv. auto.} & 
\multicolumn{1}{c|}{Total elapsed time} \\ \hline \endfirsthead

\multicolumn{5}{c}%
{{\bfseries \tablename\ \thetable{} -- continued from previous page}} \\
\hline \multicolumn{1}{|c|}{\,\,$q$\,\,} & \multicolumn{1}{|c|}{Degree bd. $B$} & \multicolumn{1}{|c|}{\# of fields} & \multicolumn{1}{c|}{\# non-triv. auto.} & 
\multicolumn{1}{c|}{Total elapsed time} \\ \hline
\endhead

\hline \multicolumn{5}{|c|}{{Continued on next page}} \\ \hline
\endfoot

\hline 
\endlastfoot
\,\,$5$\,\, & $3$ & $100$ & --- & 0.02 seconds \\ \cline{2-5}
& $5$ & 2100 & --- & 1.21 seconds \\ \cline{2-5}
& $7$ & 64580 & --- & 31.66 seconds \\ \cline{2-5}
& $9$ & 1877260  & --- & 26 minutes, 4 sec \\ \cline{2-5}
& $11$ & 45627300 & --- & 9 hours, 31 min, 45 sec \\ \cline{2-5}
\hline
\,\,$7$\,\, & $3$ & 294 & --- & 0.17 seconds\\ \cline{2-5}
& $5$  & 12642 & ---  & 17.48 seconds  \\ \cline{2-5}
& $7$  & 718494 & --- & 25 minutes, 4 sec \\ \cline{2-5}  
& $9$  & 39543210 & --- & 21 hours, 56 min, 45 sec \\ \cline{2-5}
\hline
\,\,$11$\,\, & $3$  & 1210 & --- & 2.61 seconds \\ \cline{2-5}
& $5$  & 134310 & --- & 18 minutes, 45 sec  \\ \cline{2-5}
&$7$  & 17849810 & ---& 1 day, 2 hours, \\ 
&&&&9 min, 28 sec  \\ \hline
\,\,$13$\,\, & $3$  & 2028 & --- & 7.10 sec \\ \cline{2-5}
& $5$  & 318396 & --- & 55 minutes, 54 sec \\  \cline{2-5}
& $7$  & 58239948  & --- & 6 days, 31 min, 56 sec   \\ \hline
\end{longtable}
\end{center}
\begin{center}
\begin{longtable}{|c|c|c|c|c|} 
\caption{Number of cubic function fields over $\ratsfuncq{q}$ of discriminant $D$ with $\deg(D) \leq B$, $\deg(D)$ even}\label{tabtableEven} \\

\hline \multicolumn{1}{|c|}{\,\,$q$\,\,} & \multicolumn{1}{|c|}{Degree bd. $B$} & \multicolumn{1}{|c|}{\# of fields} & \multicolumn{1}{c|}{\# non-triv. auto.} & 
\multicolumn{1}{c|}{Total elapsed time} \\ \hline \endfirsthead

\multicolumn{5}{c}%
{{\bfseries \tablename\ \thetable{} -- continued from previous page}} \\
\hline \multicolumn{1}{|c|}{\,\,$q$\,\,} & \multicolumn{1}{|c|}{Degree bd. $B$} & \multicolumn{1}{|c|}{\# of fields} & \multicolumn{1}{c|}{\# non-triv. auto.} & 
\multicolumn{1}{c|}{Total elapsed time} \\ \hline
\endhead

\hline \multicolumn{5}{|c|}{{Continued on next page}} \\ \hline
\endfoot

\hline 
\endlastfoot
\,\,$5$\,\, 
& $4$ & 280 & 10 & 0.88 seconds\\ \cline{2-5}
& $6$ & 6480 & 10 & 19.06 seconds \\ \cline{2-5}
& $8$ &156920 & 320 & 12 minutes, 3 sec \\ \cline{2-5}
& $10$ & 4688440 & 320 & 7 hours, 12 min, 32 sec \\ \cline{2-5}
& $12$ & 117981240 & 11385 & 10 days, 19 hours, \\ 
& & & & 3 min, 8 sec \\ \hline
\,\,$7$\,\, 
&$4$  & 1077 & 42 & 12.18 seconds\\ \cline{2-5}
&$6$  & 51645  & 42 & 17 minutes, 28 sec  \\ \cline{2-5}
&$8$  & 2475271 & 2436 & 10 hours, \\ 
&&&&38 minutes, 33 sec \\ \cline{2-5}
&$10$ &   138360895 & 2436  &  23 days, 4 hours,  \\ 
&&&& 16 min, 51 sec \\ \hline
\,\,$11$\,\, & $4$  & 5722 & 54 & 13 minutes, 5 sec \\ \cline{2-5}
& $6$  & 810372 & 54 & 21 hours,  \\ 
& & & & 54 min, 59 sec \\ \hline
\,\,$13$\,\, & $4$  & 11334 & 304 & 40 minutes, 42 sec \\ \cline{2-5}
& $6$  & 2240106 & 304 & 3 days, 14 hours  \\
\end{longtable}
\end{center}

Timings for Algorithm \ref{tabimagbetter} are compared to those of a basic version of Algorithm \ref{tabimagbetter} in Table \ref{ModvsBasicimag} for $q=5,7$.  Recall that this basic version simply loops over all all $a,b,c,d$ satisfying the bounds of Theorem \ref{acoefbound}, without a priori eliminating potential unsuitable values of $d$ as was done in Algorithm \ref{tabimagbetter}.  The cases $q=11,13$ are omitted for brevity.  As seen in the third column of these tables, the modified algorithm is a significant improvement over the basic algorithm.  These improvements appear to get better as the degree increases, likely because higher degrees give fewer chances of ``bad" leading term cancellations in the discriminant, thereby a priori eliminating more unsuitable values of $d$.  Also, interestingly, the improvement seems more pronounced for even degrees. Again, this is likely due to fewer ``bad" cancellations of leading terms of the discriminant in this case. For example, the terms $(bc)^2$ or $27(ad)^2$ in $D$ can never individually dominate if $\deg(D)$ is odd.

\begin{center}
\begin{longtable}{|c|c|c|c|c|} 
\caption{Basic vs. modified algorithm timings}\label{ModvsBasicimag} \\

\hline \multicolumn{1}{|c|}{\,\,$q$\,\,} & \multicolumn{1}{|c|}{Degree bd.} & \multicolumn{1}{|c|}{Basic times} & \multicolumn{1}{c|}{Modified times} & 
\multicolumn{1}{c|}{Basic/Mod} \\ \hline \endfirsthead

\multicolumn{5}{c}%
{{\bfseries \tablename\ \thetable{} -- continued from previous page}} \\
\hline \multicolumn{1}{|c|}{\,\,$q$\,\,} & \multicolumn{1}{|c|}{Degree bd.} & \multicolumn{1}{|c|}{Basic times} & \multicolumn{1}{c|}{Modified times} & 
\multicolumn{1}{c|}{Basic/Mod} \\ \hline
\endhead

\hline \multicolumn{5}{|c|}{{Continued on next page}} \\ \hline
\endfoot

\hline 
\endlastfoot
\,\,$5$\,\, & $3$  & 0.09 seconds & 0.02 seconds & 4.5 \\ \cline{2-5}
& $4$  & 7.67 seconds  & 0.88 seconds  & 8.72 \\  \cline{2-5}
&$5$  & 7.19 seconds & 1.21 seconds & 5.94 \\ \cline{2-5}
&$6$  & 3 minutes, 55 sec  & 19.06 seconds & 12.32 \\\cline{2-5}
&$7$  & 3 minutes,   20 sec & 31.66 seconds & 6.33 \\ \cline{2-5}
&$8$  & 5 hours, 52 min, 38 sec  & 12 minutes, 3 sec & 29.28   \\ \cline{2-5}
&$9$  & 4 hours,   & 26 minutes, 4 sec & 9.51\\ 
&          &    7 min, 57 sec     &              &    \\ \cline{2-5}
&$10$ & 5 days, 1 hour, & 7 hours,  &  16.87 \\ 
 &          &  38 min, 6 sec  &   12 min, 32 sec&  \\ \cline{2-5}
&$11$  & 6 days, 8 hours,   & 9 hours, & 15.99\\ 
&            &  22 min, 7 sec     &  31 min, 45 sec   &  \\ \hline
\,\,$7$\,\, & $3$  & 0.73 seconds  & 0.17 seconds & 4.29 \\ \cline{2-5}
& $4$  & 2 minutes,  1 sec &  12.18 seconds & 9.92 \\   \cline{2-5}
& $5$  & 3 minutes, 56 sec & 17.48 seconds & 13.49 \\\cline{2-5}
&$6$  & 3 hours,  & 17 minutes, 28 sec  & 11.26 \\   
&         &     16 min, 40 sec      &       &    \\ \cline{2-5}
& $7$  & 3 hours, 12 min, 34 sec & 25 minutes, 4 sec & 7.68 \\ \cline{2-5}  
& $8$  & 10 days,  & 10 hours,   & 22.61 \\ 
&          &  38 min, 17 sec   &  38 minutes, 33 sec &   \\ \cline{2-5}
& $9$  & 12 days, 18 hours,  & 21 hours,  & 13.97 \\  
&         &  35 min, 3 sec  & 56 min, 45 sec  &  \\  \cline{2-5}
\end{longtable}
\end{center}

The value $h=2$ was chosen as a primitive root for $\finfldq{5}$, $\finfldq{11}$ and $\finfldq{13}$.  For $\finfldq{7}$, $h=3$ was chosen.  This completely determines the set $S$ specified in Section \ref{BQFUnus}.  These sets were $\{1,2\}$, $\{1,2,3\}$, $\{1,2,4,5,8\}$ and $\{1,2,3,4,6,8\}$ for $\finfldq{5}$, $\finfldq{7}$, $\finfldq{11}$ and $\finfldq{13}$, respectively.

The worst-case complexity of the algorithm, expressed in terms of the number of field operations required as a function of the degree bound $B$, largely depends on the size of each of the coefficients $a,b,c,d$ of a binary cubic form that the algorithm loops over.  It follows from Theorem \ref{acoefbound} that $|abcd| \leq |D|$, so the number of forms that need to be checked up to an upper bound $B$ on $\deg(D)$ is of order $B$.  This idea is fully explained in the following lemmas.

\begin{lemma} \label{l:forms1}
For $s \in \mathbb{N}$, denote by $\mathcal{F}_s$ the set of binary cubic forms $f = (a, b, c, d)$ over $\Fq[t]$ such that $\deg(D(f)) = s$, $\deg(a) \leq s/4$, $\deg(b) \leq s/4$, $\deg(ad) \leq s/2$, $\deg(bc) \leq s/2$, and $\sgn(a) \in S$. Then

\[ \#\mathcal{F}_s \leq \begin{cases} \ \displaystyle \frac{q^3}{32} \, s^2 q^s + O(s q^s) & \mbox{if $s$ is odd} \ , \\[10pt] \ \displaystyle \frac{q^4}{32} \, s^2 q^s + O(s q^s) & \mbox{if $s$ is even} \ ,  \end{cases} \]
as $s \rightarrow \infty$.
\end{lemma}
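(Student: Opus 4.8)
The plan is to bound $\#\mathcal{F}_s$ by counting $4$-tuples $(a,b,c,d)$, dropping the constraint $\deg(D(f)) = s$ wherever it is not needed. Split $\mathcal{F}_s = \mathcal{F}_s' \cup \mathcal{F}_s''$ according to whether $b \neq 0$ or $b = 0$. The main term will come entirely from $\mathcal{F}_s'$, while $\mathcal{F}_s''$ will be shown to be negligible and absorbed into the error term. Throughout, I would work asymptotically, so that $\lfloor s/4\rfloor < (s-1)/2$ and similar inequalities hold automatically.

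For $\mathcal{F}_s'$ I would partition by the exact degrees $\alpha := \deg(a)$ and $\beta := \deg(b)$, which range over $0 \le \alpha,\beta \le \lfloor s/4\rfloor$. For fixed $\alpha$, the number of $a \in \polyringq{q}$ of degree exactly $\alpha$ with $\sgn(a) \in S$ is $\frac{q-1}{2}q^{\alpha}$, since $|S| = (q-1)/2$; for fixed $\beta$, the number of $b$ of degree exactly $\beta$ is $(q-1)q^{\beta}$. Given $\deg(b) = \beta$, the inequality $\deg(bc) \le s/2$ is equivalent to $\deg(c) \le s/2 - \beta$, and given $\deg(a) = \alpha$, the inequality $\deg(ad) \le s/2$ is equivalent to $\deg(d) \le s/2 - \alpha$. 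This is precisely where the parity of $s$ enters and where the factor of $q$ separating the two cases comes from: when $s$ is odd, $\deg(c) \le s/2$ forces $\deg(c) \le (s-1)/2$, giving $q^{(s+1)/2 - \beta}$ choices for $c$ and $q^{(s+1)/2 - \alpha}$ for $d$; when $s$ is even one instead gets $q^{s/2 - \beta + 1}$ and $q^{s/2 - \alpha + 1}$, one extra power of $q$ in each. Multiplying the four counts, the $\alpha$ and $\beta$ cancel in the exponent, leaving $\frac{(q-1)^2}{2}q^{s+1}$ (odd case) or $\frac{(q-1)^2}{2}q^{s+2}$ (even case) for each pair $(\alpha,\beta)$; summing over the $(\lfloor s/4\rfloor + 1)^2$ admissible pairs and using $(\lfloor s/4\rfloor + 1)^2 = s^2/16 + O(s)$ together with $(q-1)^2 q \le q^3$ and $(q-1)^2 q^2 \le q^4$ yields $\#\mathcal{F}_s' \le \frac{q^3}{32}s^2 q^s + O(sq^s)$ and $\#\mathcal{F}_s' \le \frac{q^4}{32}s^2 q^s + O(sq^s)$ respectively.

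The only step that requires an actual argument rather than bookkeeping is $\mathcal{F}_s''$, where $b = 0$: here the listed inequalities no longer bound $\deg(c)$, so the constraint $\deg(D(f)) = s$ must be invoked. For $b = 0$ one has $D(f) = -4ac^3 - 27a^2d^2$, and $\deg(ad) \le s/2$ forces $\deg(a^2d^2) \le s$, hence $\deg(ac^3) = \deg(D(f) + 27a^2d^2) \le s$, so that $\deg(c) \le (s - \deg(a))/3 \le s/3$. Counting $a$ (at most $\frac{q-1}{2}q^{\alpha}$ for $\deg(a) = \alpha \le \lfloor s/4\rfloor$), $c$ (at most $q^{(s-\alpha)/3 + 1}$), and $d$ (at most $q^{s/2 - \alpha + 1}$) and summing over $\alpha$ gives $\#\mathcal{F}_s'' = O\!\left(s\,q^{5s/6}\right) = o(q^s)$, which is swallowed by $O(sq^s)$. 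Adding the $\mathcal{F}_s'$ and $\mathcal{F}_s''$ contributions gives the claimed bound in both parities.

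I expect no serious obstacle: the estimate is essentially a geometric-series computation once the degrees of $a$ and $b$ are fixed. The one place demanding care is the conversion of the product bounds $\deg(bc) \le s/2$ and $\deg(ad) \le s/2$ into counts of admissible $c$ and $d$ — keeping track of the floor functions and the odd/even split there is exactly what produces the two different leading constants $q^3/32$ and $q^4/32$ — together with the observation that when $b = 0$ the finiteness of $\mathcal{F}_s$, and the bound $\deg(c) \le s/3$, come from $\deg(D(f)) = s$ rather than from the explicitly listed degree inequalities.
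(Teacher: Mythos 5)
Your proof is correct and follows essentially the same elementary counting strategy as the paper: fix the degrees of $a$ and $b$, count admissible $c$ and $d$ via geometric series, and observe that the parity of $s$ (through $\lfloor s/2\rfloor$ versus $s/2$) is what produces the extra factor of $q$ in the even case. The paper organizes the bookkeeping slightly differently, first computing the number $N_s$ of pairs of \emph{monic} polynomials $(G,H)$ with $\deg(G)\le s/4$ and $\deg(GH)\le s/2$ in closed form and then multiplying $N_s^2$ by the number of admissible leading coefficients, whereas you partition by exact degrees and let the exponents cancel; both yield the same leading constants $q^3/32$ and $q^4/32$. The one genuine difference is your separate treatment of the degenerate case $b=0$, where the listed inequality $\deg(bc)\le s/2$ fails to bound $\deg(c)$ and one must fall back on $\deg(D(f))=s$ to get $\deg(c)\le s/3$ and a contribution of $O(s\,q^{5s/6})$; the paper's proof passes over this case silently, so your argument is in fact slightly more complete on this point, and the extra contribution is absorbed into the error term exactly as you say.
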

\begin{proof}
The number of monic polynomials in $\Fq[t]$ of degree $m$ is $q^m$. Hence, the number of pairs of monic polynomials $(G, H)$ with $\deg(G) \leq s/4$ and $\deg(GH) \leq s/2$ is
\begin{eqnarray*}
N_s & = & \sum_{m \leq s/4} \ \sum_{m+n \leq s/2} q^{m+n} = \sum_{m=0}^{s/4} q^m \, \sum_{n = 0}^{\lfloor s/2 \rfloor -m} q^n \\
& = & \frac{1}{q-1} \, \sum_{m=0}^{s/4} (q^{\lfloor s/2 \rfloor +1} - q^m) = \frac{q}{4(q-1)} s q^{\lfloor s/2 \rfloor } + O(q^{s/2})
\end{eqnarray*}
as $s \rightarrow \infty$.

Let $f = (a,b,c,d) \in \mathcal{F}_s$. Note that $ad \neq 0$. Thus there are $(q-1)/2$ choices for $\sgn(a)$ and $q-1$ choices for $\sgn(d)$, for a total of $N_s(q-1)^2/2$ pairs $(a, d)$. Similarly, the possible number of $(b,c)$ pairs is $N_s q^2$. Hence the total number of forms in $\mathcal{F}_s$ is
\[ N_s^2 \, \frac{q^2(q-1)^2}{2} = \frac{q^4}{32} \, s^2 q^{2 \lfloor s/2 \rfloor} \ . \]
The result now simply follows from the fact that $2 \lfloor s/2 \rfloor = s$ if $s$ is even and $s-1$ if $s$ is odd.
\end{proof}

\begin{lemma} \label{l:forms2}
Let $\mathcal{F}_s$ be as defined in Lemma \ref{l:forms1} and $E_s$ denote the number of forms in $\mathcal{F}_s$ such that $H_f = (P, Q, R)$ is partially reduced and $\deg(P) = \deg(R)$. Then
\[ E_s \leq \frac{q+1}{2} \, q^s \]
if $s \equiv 0 \pmod{4}$, and $E_s = 0$ if $s \not \equiv 0 \pmod{4}$.
\end{lemma}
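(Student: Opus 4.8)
The plan is to split the claim into the vanishing part ($E_s=0$ when $4\nmid s$) and the inequality ($E_s\le\frac{q+1}{2}q^s$ when $4\mid s$), and to reduce the inequality to a count of the leading coefficients of $P,Q,R$ together with one fixed-point-free involution. First I would record what partial reducedness of $H_f=(P,Q,R)$ forces on degrees. Since $\deg(Q)<\deg(P)\le\deg(R)$, no cancellation occurs in $D(H_f)=Q^2-4PR$, so $s=\deg(-3D(f))=\deg(D(H_f))=\deg(P)+\deg(R)$; hence $\deg(P)=\deg(R)$ makes $s$ even with $\deg(P)=\deg(R)=s/2$, and odd $s$ is excluded outright. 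Next I would show $\deg(c)\le s/4$ and $\deg(d)\le s/4$ (the bounds $\deg(a),\deg(b)\le s/4$ being built into $\mathcal{F}_s$). If $\deg(c)>s/4$, then $\deg(c^2)>s/2=\deg(R)$ forces $\deg(bd)=\deg(c^2)=2\deg(c)$ in $c^2=R+3bd$, hence $\deg(d)=2\deg(c)-\deg(b)>\deg(c)$ and $\deg(d)>\deg(b)$; substituting into the identity $bR+3dP-cQ=0$ from the proof of Theorem~\ref{acoefbound} gives $\deg(dP)>\deg(bR)$, hence $\deg(d)+s/2=\deg(cQ)<\deg(c)+s/2$, i.e.\ $\deg(d)<\deg(c)$, contradicting $\deg(b)=2\deg(c)-\deg(d)>\deg(c)>s/4$; a one-line variant of the same identity handles $d$. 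Granting $\deg(a),\deg(c)\le s/4$, the vanishing part is immediate: if $s$ is even but $4\nmid s$ then $\deg(a),\deg(c)\le\lfloor s/4\rfloor=(s-2)/4$, so $\deg(b^2)$ and $\deg(ac)$ are both $\le(s-2)/2<s/2$, forcing $\deg(P)<s/2$, a contradiction.

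For the inequality, write $s=4k$ (so $k\ge1$) and, for $x\in\polyringq{q}$ of degree $\le k$, let $x_k$ be the coefficient of $t^k$ in $x$. Comparing coefficients of $t^{2k}$ in $P=b^2-3ac$, $R=c^2-3bd$, $Q=bc-9ad$, and using that a partially reduced quadratic form with $|P|=|R|$ has $\sgn(P)=1$ (Definition~\ref{partiallyreduced}), while $H_f$, being partially reduced with discriminant of even degree $s$, is unusual, so the normalization $\sgn(-3D)\in\{1,h\}$ gives $\sgn(-3D(f))=h$ and hence $\sgn(R)=-h/4$ (since $\sgn(D(H_f))=-4\sgn(P)\sgn(R)$), every $f$ counted by $E_s$ satisfies
\[ b_k^2-3a_kc_k=1,\qquad c_k^2-3b_kd_k=-h/4,\qquad b_kc_k-9a_kd_k=0. \]
A short elimination then shows that for each value of $a_k$ this system has at most two solutions $(b_k,c_k,d_k)$: if $a_k\ne0$, the first and third equations give $c_k$ and $d_k$ in terms of $b_k$ and the second collapses to $b_k^2=1+\frac{9h}{4}a_k^2$; if $a_k=0$, then $b_k=\pm1$, $c_k=0$, $d_k=\pm h/12$.

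Now let $\mathcal{G}$ be the set of tuples $(a,b,c,d)$ with all four degrees $\le k$, with $ad\ne0$, with $\sgn(a)\in S$, and satisfying the three displayed equations; then $E_s$ is at most the number of $f\in\mathcal{G}$ for which $Q=bc-9ad$ is $0$ or has $\sgn(Q)\in S$. Choosing $a$ first (there are $\frac{q^{k+1}-1}{2}$ admissible $a$, and $a_k$ is then determined), then $(b_k,c_k,d_k)$ (at most two choices), then the degree-$\le k-1$ parts of $b,c,d$ ($q^k$ choices each), gives $\#\mathcal{G}\le(q^{k+1}-1)q^{3k}$; imposing in addition $bc=9ad$, which determines $d$, gives $A:=\#\{f\in\mathcal{G}:Q(f)=0\}\le(q^{k+1}-1)q^{2k}$. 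The involution $\iota(a,b,c,d)=(a,-b,c,-d)=f\circ\left(\begin{smallmatrix}1&0\\0&-1\end{smallmatrix}\right)$ preserves $\mathcal{G}$ (it fixes $a$ and negates $b,d$, so it preserves all degrees, $ad\ne0$, $\sgn(a)$, and each of the three equations), negates $Q$, and is fixed-point-free because $d\ne0$ excludes $b=d=0$; so among the $f\in\mathcal{G}$ with $Q\ne0$ exactly one per $\iota$-orbit has $\sgn(Q)\in S$, whence
\[ E_s\le A+\tfrac12(\#\mathcal{G}-A)=\tfrac12(\#\mathcal{G}+A)\le\tfrac12(q^{k+1}-1)(q^{3k}+q^{2k})\le\tfrac12(q^{4k+1}+q^{3k+1})\le\tfrac12(q^{4k+1}+q^{4k})=\tfrac{q+1}{2}q^s, \]
the last step using $q^{3k+1}\le q^{4k}$ for $k\ge1$.

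The step I expect to be the genuine obstacle is the degree bookkeeping that pins down $\deg(c),\deg(d)\le s/4$: it relies on repeatedly arguing that leading terms cannot cancel in identities such as $c^2=R+3bd$ and $bR+3dP=cQ$, and on keeping the nested case distinctions (especially $\deg(b)$ versus $\deg(d)$) straight. Everything afterwards — the three leading-coefficient equations, the crude count of $\mathcal{G}$, and the involution — is routine, and the constant $\frac{q+1}{2}$ is merely the clean round-up of $\tfrac12(q^{4k+1}+q^{3k+1})$ that these estimates yield.
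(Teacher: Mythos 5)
Your proof is correct. Its first half coincides with the paper's: both use the linear syzygies relating $P,Q,R$ to $a,b,c,d$ (equation (\ref{eq:lin-comb}), equivalently $bR+3dP=cQ$) to force $\deg(c),\deg(d)\le s/4$, and both then rule out $s\not\equiv 0\pmod 4$ by a parity/degree obstruction (your version of this step, via $\deg(P)\le 2\lfloor s/4\rfloor<s/2$, is actually cleaner than the paper's appeal to ``$\deg(a)=\deg(c)=s/4$ or $\deg(b)=\deg(d)=s/4$''). The counting step is organized genuinely differently. The paper splits into three cases according to which of the pairs $(a,c)$, $(b,d)$ attain degree exactly $s/4$, and in the main case counts the $q+1$ points on the conic $v^2-hu^2=1$, trimming with $\sgn(a)\in S$; the normalization $\sgn(Q)\in S$ is never used, and the cases sum to exactly $(q+1)q^s/2$. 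You instead impose the three leading-coefficient equations (from $\sgn(P)=1$, $\sgn(R)=-h/4$, and the vanishing of the $t^{s/2}$-coefficient of $Q$) uniformly, observe that they admit at most two solutions $(b_k,c_k,d_k)$ for each $a$ --- your relation $b_k^2=1+\tfrac{9h}{4}a_k^2$ is the paper's conic in disguise --- and then recover a factor of $\tfrac12$ from the condition $\sgn(Q)\in S$ via the fixed-point-free involution $f\mapsto f\circ\left(\begin{smallmatrix}1&0\\0&-1\end{smallmatrix}\right)$, which negates $Q$. Your product bound is cruder term by term, but the involution compensates and $\tfrac12\bigl(q^{4k+1}+q^{3k+1}\bigr)\le\tfrac{q+1}{2}\,q^{s}$ for $k\ge 1$, so you land under the same constant. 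The trade-off: your route avoids the case analysis and exploits a normalization the paper leaves unused, while the paper's sharper per-case accounting exhibits the bound as an exact sum of three contributions. (Both arguments, yours explicitly and the paper's implicitly, rely on the standing convention that a partially reduced unusual Hessian has discriminant of sign exactly $h$, not merely a non-square; since you state this assumption and it matches the paper's, it is not a gap.)
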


\begin{proof}
Let $f = (a,b,c,d) \in \mathcal{F}_s$ so that $H_f$ is partially reduced with $\deg(P) = \deg(R)$. Then $\deg(Q) < \deg(P) = \deg(R) = s/2$, so $s$ must be even. A straightforward calculation yields
\begin{equation} \label{eq:lin-comb}
bQ = cP + 3aR \ , \qquad 3cQ = 3dP + bR \ ,
\end{equation}
whence $\deg(c) = \deg(bQ-3aR) - \deg(P) \leq s/4$ and $\deg(d) = \deg(3cQ-bR) - \deg(P) \leq s/4$. Hence all of $a, b, c, d$ have degree no more than $s/4$. In fact, the formulas for $Q, P, R$ can be seen to force $\deg(a) = \deg(c) = s/4$ or $\deg(b) = \deg(d) = s/4$. It follows that $E_s = 0$ unless $s$ is a multiple of 4, which we assume for the remainder of the proof.

If $\deg(b), \deg(d) \leq s/4 - 1$, then $\sgn(c)^2 = \sgn(R) = -h/4$ forces $-1$ to be a non-square in $\Fq$, or equivalently, $q \equiv -1 \pmod{4}$. Together with $-3\, \sgn(ac) = \sgn(P) = 1$ and $\sgn(a) \in S$, this determines $\sgn(a)$ and $\sgn(c)$ uniquely. Thus, this accounts for at most $q^s$ forms if $q \equiv -1 \pmod{4}$ and no forms if $q \equiv 1 \pmod{4}$.

If $\deg(a), \deg(c) \leq s/4-1$, then $\deg(b) = \deg(d) = s/4$. In this case, $P = b^2 - 3ac$ forces $\sgn(b) = \pm 1$, so there are $2q^{s/4}$ possibilities for $b$. Then $\sgn(R) = -3\, \sgn(bd)$ determines $\sgn(d)$, so there are $q^{s/4}$ choices for $c$ and $d$ each. The number of permissable $a$ is at most
\[ \frac{q-1}{2} \, \sum_{m=0}^{s/4-1} q^m < \frac{q^{s/4}}{2} \ . \]
So this case produces no more that $q^s$ forms.

Finally, suppose that $a, b, c, d$ all have degree $s/4$. By (\ref{eq:lin-comb}), the leading coefficients of $a$ and $b$ determine those of $c$ and $d$ uniquely. Specifically,  $\sgn(c) = 3h\, \sgn(a)/4$, and substituting this into $P - b^2 - 3ac$ yields $\sgn(b)^2 - h (3 \, \sgn(a)/2)^2 = 1$. The equation $v^2 - hu^2 = 1$ represents a non-degenerate conic which has $q+1$ solutions $(u,v)$ over $\Fq$. However, since $a \neq 0$, the two solutions $(u, v) = (0, \pm 1)$ are invalid. In addition, if $q \equiv -1 \pmod{4}$, then $v = 0$ produces two invalid solutions. In fact, $\sgn(a) \in S$ eliminates half of the remaining solutions, allowing $(q-3)/2$ choices for $(\sgn(a), \sgn(b))$ when $q \equiv -1 \pmod{4}$, and $(q-1)/2$ choices when $q \equiv 1 \pmod{4}$. So the number of possibilities for $f$ is $(q-3)q^s/2$  if $q \equiv -1 \pmod{4}$ and $(q-1)q^s/2$  if $q \equiv 1 \pmod{4}$.

In all scenarios, the number of possibilities for $f$ adds up to $(q+1)q^s/2$ as claimed.
\end{proof}

\begin{corol}
Assuming standard polynomial arithmetic in $\Fq[t]$, Algorithm 2 requires $O(B^4
q^B) = O(q^{B+\epsilon})$ operations in $\Fq$ as $B \rightarrow \infty$.  The $O$-constant is cubic in $q$ when $B$ is odd and quartic in $q$ when $B$ is even.
\label{algislinanalysisunus}
\end{corol}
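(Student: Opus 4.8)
The plan is to estimate the running time as (the number of 4-tuples $(a,b,c,d)$ processed by Algorithm~\ref{tabimagbetter}) $\times$ (the cost of processing one tuple), and then to verify that the genuinely expensive unusual-Hessian branch contributes only a lower-order term. Since every short-circuit built into Algorithm~\ref{tabimagbetter} --- the $m_i$-parity pruning and the early $\sgn(a)$ test --- only removes work, it suffices to bound the running time of the variant that visits \emph{every} tuple satisfying the loop bounds and, on each, computes the Hessian $(P,Q,R)$ and discriminant $D$, tests reducedness, and runs Algorithm~\ref{cubformtestversion2}.

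First I would count candidates. The loop bounds of Algorithm~\ref{tabimagbetter} are precisely $\deg a\le B/4$, $\sgn(a)\in S$, $\deg b\le B/4$, $\deg(bc)\le B/2$, $\deg(ad)\le B/2$, so the number of tuples visited equals the count appearing in the proof of Lemma~\ref{l:forms1} with $s=B$ (the one before imposing $\deg D(f)=s$), namely
\[
\tfrac{q^4}{32}\,B^2\,q^{2\lfloor B/2\rfloor}+O(Bq^B).
\]
Hence it is $O(B^2q^B)$, with leading constant $\tfrac{q^4}{32}$ (quartic in $q$) when $B$ is even and $\tfrac{q^3}{32}$ (cubic in $q$) when $B$ is odd; equivalently, summing the bounds $\#\mathcal{F}_s$ of Lemma~\ref{l:forms1} over all $s\le B$ gives the same conclusion, since consecutive terms grow by a factor tending to $q$, so the sum is governed by the term $s=B$ (together with $s=B-1$, which has the same order, when $B$ is odd).

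Next I would bound the per-candidate cost. By Theorem~\ref{acoefbound}, every polynomial occurring in the computation --- $a,b,c,d$, the Hessian entries $P,Q,R$, the discriminant $D$, and all auxiliary polynomials and gcds in Algorithm~\ref{cubformtestversion2} --- has degree $O(B)$; even a tuple rejected for having $\deg D>B$ still has $\deg D=O(B)$, since each term of the cubic discriminant has degree $O(B)$ under the loop bounds. Under standard polynomial arithmetic a multiplication of two such polynomials, and a Euclidean gcd, each cost $O(B^2)$ operations in $\Fq$, and only $O(1)$ of these occur per candidate (a square-freeness test being a single gcd). So a candidate is handled in $O(B^2)$ field operations in the generic case --- imaginary Hessian, unusual Hessian with $|P|<|R|$, or immediate rejection --- and the product $O(B^2q^B)\cdot O(B^2)=O(B^4q^B)$ is the main term, inheriting the cubic/quartic-in-$q$ leading constant from the candidate count.

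The step I expect to need the most care is the unusual-Hessian branch with $|P|=|R|$. Here the algorithm additionally calls \emph{ConicSolver} (a one-time $O(q^2)$ precomputation), applies the $q+1$ constant matrices $M_{\alpha,\beta}$ of Corollary~\ref{qplus1reps}, performs $O(q)$ lexicographic comparisons of cubic forms with degree-$O(B)$ coefficients, and runs the automorphism test of Theorem~\ref{distinctalphas}: $O(B^2+qB)$ extra field operations per such form. The key point, supplied by Lemma~\ref{l:forms2}, is that the number of forms of discriminant degree $s$ hitting this branch is at most $\tfrac{q+1}{2}q^s$ and is zero unless $4\mid s$; summing over $s\le B$ (again a geometric-type sum dominated by its top term) there are $O(q^B)$ of them overall, so their combined extra cost is $O(q^B(B^2+qB))=O(B^2q^B)$ for fixed $q$, which is of strictly lower order than the main term $O(B^4q^B)$ in both $B$ and $q$ and hence changes neither the $B$-exponent nor the cubic-versus-quartic dichotomy of the $O$-constant. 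Combining the two parts yields $O(B^4q^B)$; since $B^4=O(q^{\epsilon B})$ for every $\epsilon>0$ as $B\to\infty$, this is $O(q^{(1+\epsilon)B})$, which is the meaning of the $O(q^{B+\epsilon})$ in the statement. The main obstacle, then, is not a single estimate but invoking Lemma~\ref{l:forms2} to certify that the costly branch is exercised on a vanishing fraction of the search space, and tracking whether $s=B$ or $s=B-1$ controls the leading term so as to pin down the cubic-versus-quartic behaviour of the constant.
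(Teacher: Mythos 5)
Your proposal is correct and follows essentially the same route as the paper's proof: count the candidate tuples via Lemma \ref{l:forms1}, charge $O(B^2)$ field operations per tuple for degree-$O(B)$ polynomial arithmetic, and invoke Lemma \ref{l:forms2} to show the $|P|=|R|$ branch contributes only a lower-order term, with the cubic/quartic dichotomy of the constant coming from the parity-dependent leading term of the geometric sum over $s\le B$. The only cosmetic difference is that you multiply a total candidate count by a uniform per-candidate cost where the paper sums $(\#\mathcal{F}_s+2(q+1)E_s)\cdot Ks^2$ over $s$ before bounding $s^2\le B^2$; these are the same estimate.
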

\begin{proof}
Algorithm \ref{tabimagbetter} loops exactly over the forms in $\mathcal{F}_s$ for $s \leq B$, with
$\mathcal{F}_s$ as in Lemma \ref{l:forms1}. For each such form $f$, the entire
collection of polynomial computations in Algorithm \ref{tabimagbetter}, including those of Algorithm
\ref{cubformtestversion2}, requires at most $K s^2$ field operations for some constant $K$ that is
independent of $B$ and $q$. This holds because all polynomials under consideration
have degree bounded by~$s$.

The overall complexity of Algorithm \ref{tabimagbetter} is dominated by step 20. The sign and degree
checks in that step, as well as the test for partial reducedness of $H_f = (P, Q,
R)$, require negligible time. When $|P| = |R|$, the $q+1$ partially reduced forms
equivalent to $H_f$ must be computed and compared to $H_f$. Once the reduced
Hessian is found, one needs to check if it has non-trivial automorphisms according
to Theorem \ref{distinctalphas}. If yes, compute the two
corresponding equivalent binary cubic forms with the same Hessian and identity the
lexicographically smallest among these three cubic forms. It follows that the
overall complexity of Algorithm 2 is certainly bounded above by
\[ \sum_{s=1}^B \big ( \#\mathcal{F}_s + 2(q+1) E_s \big ) \cdot K s^2 \ .\]
By Lemma \ref{l:forms2}, $2(q+1) E_s = O(q^s)$, so this contribution is negligible
compared to the error term in the bound on $\#\mathcal{F}_s$ given in Lemma
\ref{l:forms1}. Hence, the asymptotic complexity of Algorithm 2 is bounded above
by
\[ \sum_{s=3}^B \left ( \left(C_s s^2 q^s + O(sq^s)\right) \cdot K s^2 \right )
    \leq  \sum_{s=3}^B C_s q^s \cdot K B^4 + O(B^3 q^B) \ , \]
where $C_s = q^3/32$ if $s$ is odd and $C_s = q^4/32$ is $s$ is even.

Suppose first that $B$ is odd. Then
\[ \sum_{s=3}^B C_s q^s = \sum_{i=1}^{(B-1)/2} \frac{q^3}{32} \, q^{2i+1}
            + \sum_{i=2}^{(B-1)/2} \frac{q^4}{32} \, q^{2i}
    < \frac{q^5}{16(q^2-1)} \, q^B \ . \]
Similarly, if $B$ is even, then
\begin{eqnarray*}
\displaystyle \sum_{s=3}^B C_s q^s
    & = & \displaystyle \sum_{i=1}^{B/2-1} \frac{q^3}{32} \, q^{2i+1}
            \ + \ \sum_{i=2}^{B/2} \frac{q^4}{32} \, q^{2i} \\
    & < & \displaystyle \frac{q^4}{32(q^2-1)} \, q^B + \frac{q^6}{32(q^2-1)} \, q^B
        \ = \ \frac{q^4(q^2+1)}{32(q^2-1)} \, q^B \ .
\end{eqnarray*}
Hence, the overall run time is $O(B^4 q^B)$ where the $O$-constant is as claimed.
\end{proof}

The bounds in Corollary \ref{algislinanalysisunus} appear to be reasonably sharp. For any fixed
$q$, the time required to run Algorithm~\ref{tabimagbetter} on the discriminant degree bound $B+2$
(of the same parity) should be larger by a factor of $q^2$, compared to the time
required when using the bound $B$. If $B$ is odd, then the computation time of
Algorithm \ref{tabimagbetter} using the (even) discriminant degree bound $B+1$ should be larger by a
factor of $(1+B^{-1})^4(q^2+1)/2$, compared to the time required when running the
algorithm with the bound~$B$. Similarly, going from an even bound $B$ to $B+1$
should increase the run time by a factor of $(1+B^{-1})^4 \cdot 2q^2\!/(q^2+1)$.
Our computations times in Tables 1 and 2 largely bear this out.

If we write $q^B = X$, i.e.\ $|D| \leq X$, then the complexity of Algorithm 2 is $O(X^{1 + \epsilon})$ as $X \rightarrow \infty$, which is completely analogous to the run time of Belabas' algorithm [3] for tabulating cubic number fields of absolute discriminant up to $X$.

Corollary \ref{algislinanalysisunus} states that our algorithm in the imaginary and unusual cases should be roughly linear in $X$ if $q$ is small.  To see that this is the case in practice, we plotted the various values of $\log_q (\mathrm{sec})$ versus $\log_q X$ (i.e.\ degree) for $q=5$ and $q=7$, where sec denotes the time (in seconds) taken to tabulate all cubic function fields whose discriminant has absolute value at most $X$ for various values of $\log_q X$.  The line of best fit for the data in the imaginary and unusual Hessian cases is also given in each figure.  As seen in Figures \ref{F5imagplot} and \ref{F7imagplot}, the running times (in seconds) of Algorithm \ref{tabimagbetter} are approximately linear in $X$ for both imaginary and unusual Hessian, as expected.

\begin{figure}
\centering
\includegraphics[width=350pt, height=219pt]{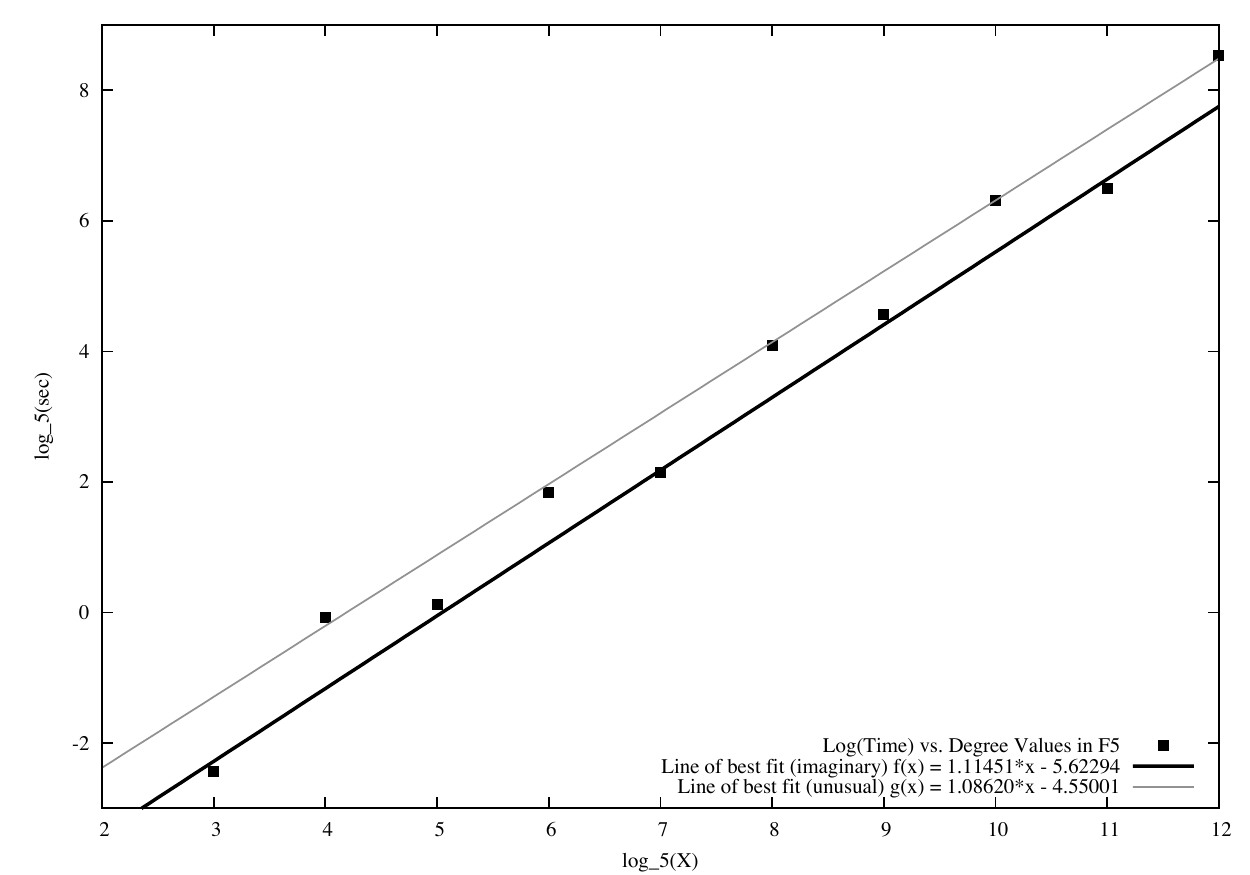}
\caption{ $\finfldq{5}$ Tabulation Timings for Cubic Function Fields:  $\log_5(sec)$ versus degree}
\label{F5imagplot}
\end{figure}

\begin{figure} 
\centering
\includegraphics[width=350pt, height=219pt]{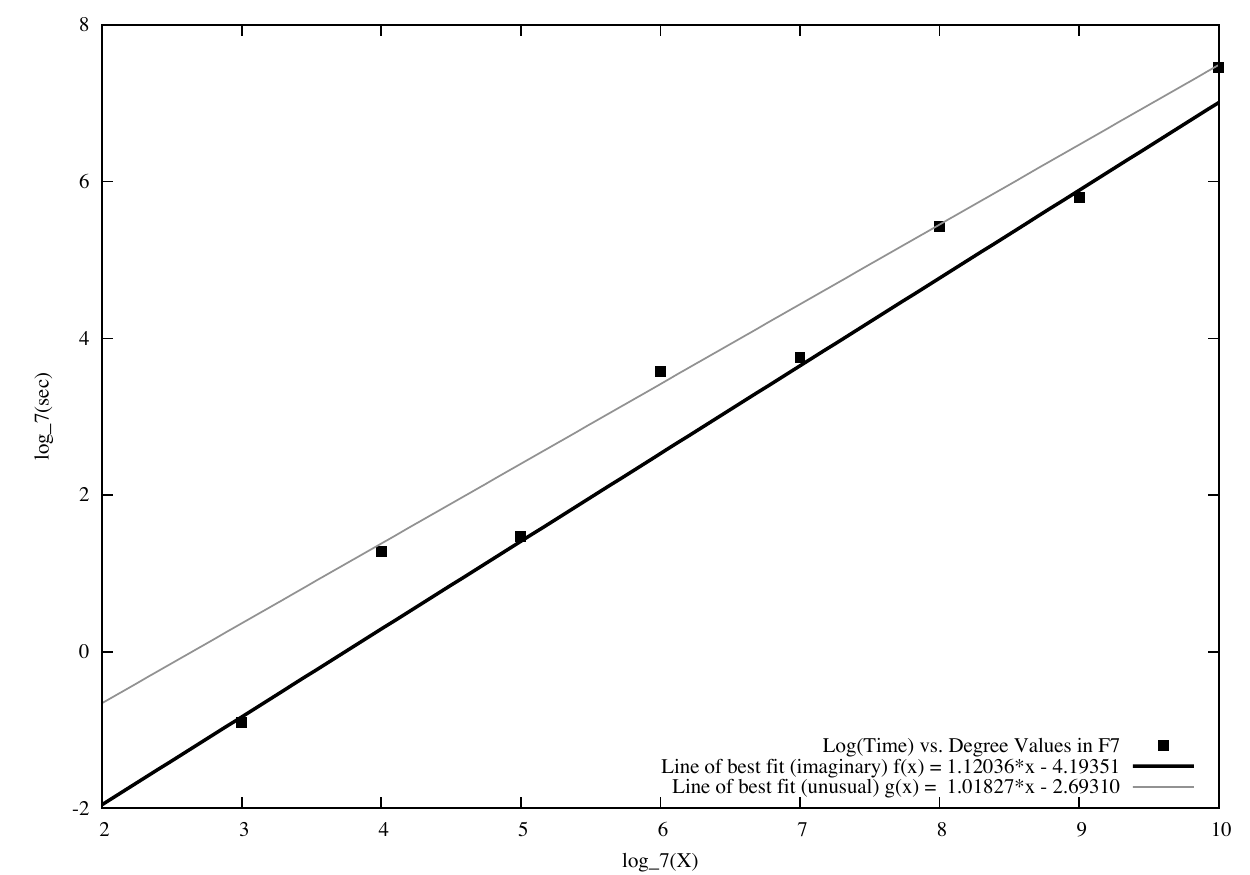}
\caption{ $\finfldq{7}$ Tabulation Timings for Cubic Function Fields:  $\log_7(sec)$ versus degree}
\label{F7imagplot}
\end{figure}

The algorithm presented in this section has some of the same advantages as
Belabas' algorithm \cite{Bela1} over earlier field tabulation algorithms (see Cohen \cite{Coh2}, Chapter 9).  In particular, by Theorem \ref{imaginuirr}, there is no need
to check for irreducibility over $\ratsfuncq{q}$ of binary cubic forms lying in
$\newU$, no need to factor the discriminant, and no need to keep
all fields found so far in memory.    Our algorithm has the
additional advantage that there is no overhead computation needed
for using a sieve to compute numbers that are not square-free,
by the remarks following Algorithm \ref{cubformtestversion2}.

The number of binary cubic forms with Hessian having non-trivial automorphisms was rare.  The percentage of such cubic fields with discriminant degree at most $B$ having non-trivial automorphisms appears to be tending towards zero as $B \rightarrow \infty$.  We conjecture that this rare behavior persists for higher degrees, but analyzing the behavior of such fields and their distribution remains an open problem.

\section{Conclusion and Open Problems}\label{sec:Concl}

The main results of this paper are the development of the reduction theory of binary cubic forms with coefficients in $\polyringq{q}$, and its use in conjunction with the Davenport-Heilbronn theorem to obtain an algorithm for tabulating cubic function fields.   The tabulation algorithm checks $O(q^{B+\epsilon})$ reduced forms in order to tabulate all cubic function fields with imaginary or unusual Hessian whose discriminant satisfies $\deg(D) \leq B$, which is in line with Belabas' result \cite{Bela1} for number fields when $q$ is small.  

The reduction theory developed here is applicable to cubic forms (and hence
function fields) of discriminant $D$ when $-3D$ is imaginary or unusual. It is unclear which suitable quadratic form should be associated to a cubic form of discriminant $D$ when $-3D$ is real. Neither the Mathews \cite{Math1}, Berwick and Mathews \cite{BerMath}, nor the Julia
approach \cite{Crem1} appear to be applicable in general here; even if there are certain cases where they might lead to a unique representative in each equivalence class of cubic forms, it is unclear how to derive upper bounds on the coefficients of such a form, due to the non-Archimedian nature of the absolute value on $\polyringq{q}$.

One possible way to overcome this obstacle is to change the question somewhat. Instead of considering cubic extensions $F/\ratsfuncq{q}$ of discriminant $D$ up to some degree bound, we consider such extensions whose \emph{ramification divisor} (or \emph{different}) $\mathfrak{D}$ has a norm which satisfies the degree bound. Here, $\mathfrak{D}$ incorporates the information on all the ramified places, including the infinite ones, while only the finite places are contained in the discriminant $D$. We have $\deg(\norm{\mathfrak{D}}) = \deg(D) + \epsilon_F$ where $\epsilon_F$  ($0 \leq \epsilon_F \leq 2$) is
given by the ramified infinite places of $F$ and can be computed from the signature at infinity of $F$ (see \cite{cuffqi,LRWWS,Sch1}). Here, one needs to understand the relationship between the ramification divisor of a cubic form and that of a suitable associated quadratic form.   A more detailed exploration of this approach is the subject of future work.

From our tabulation output, it appears that the number of cubic extensions over $\ratsfuncq{q}$ with odd discriminant degree is always divisible by $q(q-1)$.  For imaginary Hessians, the divisibility by $q$ is easily explained: every one of the $q$ translates $t \rightarrow t+u$ with $u \in \finfldq{q}^*$ keeps a form reduced since it does not change any degrees or signs. The resulting form is different unless $a,b,c,d$ are all polynomials in $t^q - t$, which is impossible from the degree bounds in Theorem \ref{acoefbound} for reasonably sized $D$: unless $\deg(D) \geq 4q$, $a$ and $b$ must be constant, which forces $\deg(c) = \deg(d) \geq q$. Then $\deg(D) = 3 \deg(c) \geq 3q$, which is still very large.  For unusual Hessians, the above argument does not apply is as the lexicographical ordering would not be preserved under these translates.   It would be interesting to be able to prove if this type of divisibility phenomenon always occurs, or at the very least, prove specific formulas for fixed even discriminant degree and $q$ values.  We discuss this in an upcoming paper \cite{ANTS9Pie}.

An explicit comparison to the Datskovsky-Wright asymptotics on cubic function fields \cite{DatsWright} was not completed here, since we did not consider the case where $-3D$ is a real discriminant.  Furthermore, the asymptotics are not given for each possible signature for $-3D$.  Other asymptotics on function fields of arbitrary degree which take into account the Galois group include Ellenberg and Venkatesh \cite{EllenVenk1}.  Density results for number fields can be found in \cite{Bhar2,EllenVenk2}, among others.

Constructing tables of number fields has been done for cubic, quartic
and other higher degree extensions (see Cohen \cite{Coh2}).  To the knowledge of the authors, the
problem of tabulation of function fields has not been widely explored.
The generalization of existing algorithms used for tabulating number fields to the function field setting is also the subject of future work.   
 
Belabas modified his tabulation algorithm to compute $3$-ranks of quadratic number fields \cite{Bela2}.  This has also been generalized to quadratic function fields in \cite{Pieter2}, and is the subject of a future paper \cite{ANTS9Pie}.\\

\noindent \emph{Acknowledgements} \hspace{3mm}  The authors would like to thank an anonymous referee for a thorough review of this work and for a number of very helpful suggestions that led to significant improvements to this paper, specifically for the complexity analysis of our algorithm.

\bibliographystyle{amsplain}

\begin{thebibliography}{75}
\bibitem{Artin1} E. Artin, Quadratische K\"{o}rper im Gebiete
    der h\"{o}heren Kongruenzen I, \emph{Math. Zeitschrift} \textbf{19}
  (1924), 153--206.
\bibitem{BachShall} E. Bach and J. Shallit, \emph{Algorithmic {N}umber {T}heory. {V}ol. 1:  Efficient Algorithms}, Foundations of Computing Series, MIT Press, Cambridge, MA, 1996.
\bibitem{Bela1} K. Belabas, A fast algorithm to compute cubic
    fields, \emph{Math. Comp.} \textbf{66} (1997), no. 219, 1213--1237.
\bibitem{Bela2} K. Belabas, On quadratic fields with large
    $3$-rank, \emph{Math. Comp.} \textbf{73} (2004), no. 248, 2061--2074.
\bibitem{BerMath} W.E.H. Berwick and G.B. Mathews, On the reduction of arithmetical binary cubics which have negative discriminant, \emph{Proc. of the London Math. Soc.}, \textbf{10} (1912), 48--53.
\bibitem{Bhar2} M. Bhargava, The density of discriminants of quartic rings and fields, \emph{Annals of Math. Second Series} \textbf{162} (2005), no. 2, 1031--1063.
\bibitem{BuchVoll} J. Buchmann and U. Vollmer, \emph{Binary Quadratic Forms:  An Algorithmic Approach}, Algorithms and Computation in Mathematics \textbf{20}. Springer, Berlin, 2007.
\bibitem{Buell1}  D.A. Buell, \emph{Binary Quadratic Forms -- Classical Theory
                         and Modern Computations}, Springer-Verlag, New York, 1989.
\bibitem{Casse1} R. Casse, \emph{Projective {G}eometry: an {I}ntroduction}, Oxford University Press, Oxford, 2006.
\bibitem{Coh2} H. Cohen, \emph{Advanced Topics in Computational Number
    Theory}, Springer-Verlag, New York, 2000.
\bibitem{CranPom} R. Crandall and C. Pomerance, \emph{Prime {N}umbers:  {A} {C}omputational {P}erspective}, First Edition, Springer, New York, 2001.
\bibitem{Crem1} J.E. Cremona, Reduction of binary cubic and
    quartic forms, \emph{LMS J. Comput. Math.} \textbf{2}
   (1999), 62--92.
\bibitem{DatsWright} B. Datskovsky and D.J. Wright, Density of
    discriminants of cubic extensions, \emph{J. Reine
    Angew. Math.} \textbf{386} (1988), 116--138.
\bibitem{HeDav1} H. Davenport and H. Heilbronn, On the density
    of discriminants of cubic fields I, \emph{Bull. London
    Math. Soc.} \textbf{1} (1969), 345--348.
\bibitem{HeDav2} H. Davenport and H. Heilbronn, On the density
    of discriminants of cubic fields II, \emph{Proc. Royal Soc. London
    A} \textbf{322} (1971), 405--420.
\bibitem{EllenVenk1} J.S. Ellenberg and A. Venkatesh, Counting extensions of function fields with bounded discriminant and specified {G}alois group, In  \emph{Geometric {M}ethods in {A}lgebra and {N}umber {T}heory}, \emph{Progress in Mathematics} \textbf{235}, 151--168, Birkh\"auser Boston, Boston, MA, 2005.
\bibitem{EllenVenk2} J.S. Ellenberg and A. Venkatesh, The number of extensions of a number field with fixed degree and bounded discriminant, \emph{Annals of Math. Second Series}, \textbf{163} (2006), no. 2, 723--741.
\bibitem{Enge1} A. Enge, How to distinguish hyperelliptic curves in even characteristic, \emph{Public-Key Cryptography and Computational Number Theory}, De Gruyter, Berlin, 2001, 49--58.
\bibitem{Hirsch1} J.W.P. Hirschfeld, \emph{Projective Geometries Over Finite Fields}, Second Edition, Oxford Mathematical Monographs, Oxford University Press, New York, 1998.
\bibitem{cuffqi} M.J. Jacobson, Jr., Y. Lee, R. Scheidler and H. Williams, Construction of all cubic function fields of a given square-free discriminant, preprint.
\bibitem{LRWWS} E. Landquist, P. Rozenhart, R. Scheidler, J. Webster and Q. Wu, An explicit treatment of cubic function fields with applications, \textit{Canadian J. Math.}, \textbf{62} (2010), no. 4, 787--807, available online at \texttt{http://www.cms.math.ca/10.4153/CJM-2010-032-0}.
\bibitem{LidlNied} R. Lidl and H. Niederreiter, \emph{Introduction to {F}inite {F}ields and {T}heir {A}pplications}, Cambridge University Press, Cambridge, 1994.
\bibitem{Math1} G. B. Mathews, On the reduction and classification of binary cubics
                  which have a negative discriminant, \emph{Proc. of the London Math. Soc.}, \textbf{10} (1912), 128--138.
\bibitem{Rosen} M. Rosen, \emph{Number Theory in Function Fields},
  Springer-Verlag, New York, 2002.
\bibitem{Pieter2} P. Rozenhart, \emph{Fast Tabulation of Cubic
    Function Fields}, Ph.D.\ Thesis, University of Calgary, 2009.
\bibitem{ANTSPie} P. Rozenhart and R. Scheidler, Tabulation of cubic function fields with imaginary and unusual {H}essian, \emph{Proc.\ Eighth Algorithmic Number Theory Symposium ANTS-VIII}, In \emph{Lecture Notes in Computer Science \textbf{5011}}, 357--370, Springer, 2008.
\bibitem{ANTS9Pie} P. Rozenhart, M.J. Jacobson, Jr., and R. Scheidler, Computing quadratic function fields with high $3$-rank via cubic field tabulation, preprint.
\bibitem{Sch1} R. Scheidler, Algorithmic aspects of cubic function fields, \emph{Proc.\ Sixth Algorithmic Number Theory Symposium ANTS-VI}, In \emph{Lecture Notes in Computer Science \textbf{3076}}, 395--410, Springer, 2004.
\bibitem{Shoup1} V. Shoup, \emph{NTL:  A Library for Doing Number
    Theory}, Software, 2001, see http://www.shoup.net/ntl.
\bibitem{Sticht} H. Stichtenoth, \emph{Algebraic Function Fields and
    Codes}, Second Edition, Springer-Verlag, New York, 2009.
\bibitem{Taniguchi} T. Taniguchi, "Distributions of discriminants of
  cubic algebras", Preprint, Available from
  \texttt{http://arxiv.org/abs/math.NT/0606109} (2006). 
\end{thebibliography}

\end{document}